\numberwithin{equation}{section}
\numberwithin{figure}{section}
\theoremstyle{plain}
\newtheorem{thm}{\protect\theoremname}[section]
\theoremstyle{plain}
\newtheorem{cor}[thm]{\protect\corollaryname}
\theoremstyle{remark}
\newtheorem{rem}[thm]{\protect\remarkname}
\theoremstyle{plain}
\newtheorem{prop}[thm]{\protect\propositionname}
\theoremstyle{plain}
\newtheorem{lem}[thm]{\protect\lemmaname}
\theoremstyle{definition}
\newtheorem{defn}[thm]{\protect\definitionname}
\providecommand{\corollaryname}{Corollary}
\providecommand{\definitionname}{Definition}
\providecommand{\lemmaname}{Lemma}
\providecommand{\propositionname}{Proposition}
\providecommand{\remarkname}{Remark}
\providecommand{\theoremname}{Theorem}
\begin{document}
\global\long\def\AA{\mathbb{A}}%
\global\long\def\BB{\mathbb{B}}%
 
\global\long\def\CC{\mathbb{C}}%
 
\global\long\def\DD{\mathbb{D}}%
 
\global\long\def\EE{\mathbb{E}}%
 
\global\long\def\FF{\mathbb{F}}%
 
\global\long\def\GG{\mathbb{G}}%
 
\global\long\def\HH{\mathbb{H}}%
 
\global\long\def\II{\mathbb{I}}%

\global\long\def\JJ{\mathbb{J}}%
 
\global\long\def\KK{\mathbb{K}}%
 
\global\long\def\LL{\mathbb{L}}%
 
\global\long\def\MM{\mathbb{M}}%
 
\global\long\def\NN{\mathbb{N}}%
 
\global\long\def\OO{\mathbb{O}}%
 
\global\long\def\PP{\mathbb{P}}%
 
\global\long\def\QQ{\mathbb{Q}}%
 
\global\long\def\RR{\mathbb{R}}%
 
\global\long\def\SS{\mathbb{S}}%
 
\global\long\def\TT{\mathbb{T}}%
 
\global\long\def\UU{\mathbb{U}}%
 
\global\long\def\VV{\mathbb{V}}%
 
\global\long\def\WW{\mathbb{W}}%
 
\global\long\def\XX{\mathbb{X}}%
 
\global\long\def\YY{\mathbb{Y}}%
 
\global\long\def\ZZ{\mathbb{Z}}%

\global\long\def\Ac{\mathcal{A}}%
 
\global\long\def\Bc{\mathcal{B}}%
 
\global\long\def\Cc{\mathcal{C}}%
 
\global\long\def\Dc{\mathcal{D}}%
\global\long\def\Ec{\mathcal{E}}%
\global\long\def\Fc{\mathcal{F}}%
 
\global\long\def\Gc{\mathcal{G}}%
\global\long\def\Hc{\mathcal{H}}%
 
\global\long\def\Ic{\mathcal{I}}%
 
\global\long\def\Jc{\mathcal{J}}%
 
\global\long\def\Kc{\text{\ensuremath{\mathcal{K}}}}%
\global\long\def\Lc{\mathcal{L}}%
 
\global\long\def\Mc{\mathcal{M}}%
 
\global\long\def\Nc{\mathcal{N}}%
 
\global\long\def\Oc{\mathcal{O}}%
\global\long\def\Pc{\mathcal{P}}%
 
\global\long\def\Qc{\mathcal{Q}}%
 
\global\long\def\Rc{\mathcal{R}}%
\global\long\def\Sc{\mathcal{S}}%
 
\global\long\def\Tc{\mathcal{T}}%
 
\global\long\def\Uc{\text{\ensuremath{\mathcal{U}}}}%
 
\global\long\def\Vc{\mathcal{V}}%
 
\global\long\def\Wc{\mathcal{W}}%
 
\global\long\def\Xc{\mathcal{X}}%
 
\global\long\def\Yc{\mathcal{Y}}%
\global\long\def\Zc{\mathcal{Z}}%

\global\long\def\Acal{\mathscr{A}}%
\global\long\def\Bcal{\mathscr{B}}%
 
\global\long\def\Ccal{\mathscr{C}}%
 
\global\long\def\Dcal{\mathscr{D}}%
 
\global\long\def\Ecal{\mathscr{E}}%
 
\global\long\def\Fcal{\mathscr{F}}%
\global\long\def\Gcal{\mathscr{G}}%
 
\global\long\def\Hcal{\mathscr{H}}%
 
\global\long\def\Ical{\mathscr{I}}%
 
\global\long\def\Jcal{\mathscr{J}}%
 
\global\long\def\Kcal{\mathscr{K}}%
\global\long\def\Lcal{\mathscr{L}}%
 
\global\long\def\Mcal{\mathscr{M}}%
 
\global\long\def\Ncal{\mathscr{N}}%
 
\global\long\def\Ocal{\mathscr{O}}%
 
\global\long\def\Pcal{\mathscr{P}}%
\global\long\def\Qcal{\mathscr{Q}}%
 
\global\long\def\Rcal{\mathscr{R}}%
 
\global\long\def\Scal{\mathscr{S}}%
 
\global\long\def\Tcal{\mathscr{T}}%
 
\global\long\def\Ucal{\mathscr{U}}%
\global\long\def\Vcal{\mathscr{V}}%
 
\global\long\def\Wcal{\mathscr{W}}%
 
\global\long\def\Xcal{\mathscr{X}}%
 
\global\long\def\Ycal{\mathscr{Y}}%
 
\global\long\def\Zcal{\mathscr{Z}}%

\global\long\def\Ane{\mathbf{A}}%
 
\global\long\def\Bne{\mathbf{B}}%
 
\global\long\def\Cne{\mathbf{C}}%
 
\global\long\def\Dne{\mathbf{D}}%
 
\global\long\def\Ene{\mathbf{E}}%
 
\global\long\def\Fne{\mathbf{F}}%
 
\global\long\def\Gne{\mathbf{G}}%
 
\global\long\def\Hne{\mathbf{H}}%
 
\global\long\def\Ine{\mathbf{I}}%
 
\global\long\def\Jne{\mathbf{J}}%
 
\global\long\def\Kne{\mathbf{K}}%
 
\global\long\def\Lne{\mathbf{L}}%
 
\global\long\def\Nne{\mathbf{N}}%
 
\global\long\def\Ane{\mathbf{A}}%
 
\global\long\def\One{\mathbf{O}}%
 
\global\long\def\Pne{\mathbf{P}}%
 
\global\long\def\Qne{\mathbf{Q}}%
 
\global\long\def\Rne{\mathbf{R}}%
 
\global\long\def\Sne{\mathbf{S}}%
 
\global\long\def\Tne{\mathbf{T}}%
 
\global\long\def\Une{\mathbf{U}}%
 
\global\long\def\Vne{\mathbf{V}}%
 
\global\long\def\Xne{\mathbf{X}}%
 
\global\long\def\Yne{\mathbf{Y}}%
 
\global\long\def\Zne{\mathbf{Z}}%

\global\long\def\Att{\mathtt{A}}%
 
\global\long\def\Btt{\mathtt{B}}%
 
\global\long\def\Ctt{\mathtt{C}}%
 
\global\long\def\Dtt{\mathtt{D}}%
 
\global\long\def\Ett{\mathtt{E}}%
 
\global\long\def\Ftt{\mathtt{F}}%
 
\global\long\def\Gtt{\mathtt{G}}%
 
\global\long\def\Htt{\mathtt{H}}%
 
\global\long\def\Itt{\mathtt{I}}%
 
\global\long\def\Jtt{\mathtt{J}}%
 
\global\long\def\Ktt{\mathtt{K}}%
 
\global\long\def\Ltt{\mathtt{L}}%
 
\global\long\def\Mtt{\mathtt{M}}%
 
\global\long\def\Ntt{\mathtt{N}}%
 
\global\long\def\Ott{\mathtt{O}}%
 
\global\long\def\Ptt{\mathtt{P}}%
 
\global\long\def\Qtt{\mathtt{Q}}%
 
\global\long\def\Rtt{\mathtt{R}}%
 
\global\long\def\Stt{\mathtt{S}}%
 
\global\long\def\Ttt{\mathtt{T}}%
 
\global\long\def\Utt{\mathtt{U}}%
 
\global\long\def\Vtt{\mathtt{V}}%
 
\global\long\def\Wtt{\mathtt{W}}%
 
\global\long\def\Xtt{\mathtt{X}}%
 
\global\long\def\Ytt{\mathtt{Y}}%
 
\global\long\def\Ztt{\mathtt{Z}}%

\global\long\def\Hyp{\mathbb{H}^{2}}%

\global\long\def\SL{\text{\ensuremath{\mathrm{SL}}}}%
\global\long\def\PSL{\mathrm{PSL}}%
\global\long\def\GL{\text{\ensuremath{\mathrm{GL}}}}%
\global\long\def\O{\text{\ensuremath{\mathrm{O}}}}%
\global\long\def\SO{\text{\ensuremath{\mathrm{SO}}}}%
\global\long\def\PSO{\mathrm{PSO}}%
 
\global\long\def\PO{\mathrm{PO}}%

\global\long\def\dd{\,\mathrm{d}}%

\global\long\def\eps{\varepsilon}%

\global\long\def\inv{^{-1}}%
 
\global\long\def\tr{\text{tr\,}}%
\global\long\def\im{\text{Im }}%
 
\global\long\def\re{\text{Re }}%
 
\global\long\def\cl{\text{cl\,}}%

\global\long\def\norm#1{\left\lvert \left\lvert #1\right\rvert \right\rvert }%
 
\global\long\def\trNorm#1{\norm{#1}{}_{\mathrm{tr}}}%
 
\global\long\def\HSnorm#1{\norm{#1}{}_{HS}}%
 
\global\long\def\opnorm#1{\norm{#1}_{op}}%

\global\long\def\scal#1#2{\left\langle #1,#2\right\rangle }%

\global\long\def\gexp{\delta_{\Gamma}}%
 
\global\long\def\llG{\ll_{\Gamma}}%
 
\global\long\def\ggG{\gg_{\Gamma}}%
 
\global\long\def\asymG{\asymp_{\Gamma}}%
 
\global\long\def\len#1{\Upsilon_{#1}}%

\global\long\def\Words{\mathcal{W}}%
\global\long\def\neWords{\mathcal{\mathcal{W}^{\circ}}}%
 
\global\long\def\mirror#1{\widetilde{#1}}%
\global\long\def\Start#1{S(#1)}%
 
\global\long\def\End#1{E(#1)}%
 
\global\long\def\wlength#1{|#1|_{\mathrm{w}}}%
 
\global\long\def\backspace#1{\overleftarrow{#1}}%

\global\long\def\df{\mathrm{def}}%
\global\long\def\eqdf{\stackrel{\df}{=}}%

\title{Spectral gap for random Schottky surfaces}
\author{Irving Calder\'on, Michael Magee and Fr\'ed\'eric Naud}
\begin{abstract}
We establish a spectral gap for resonances of the Laplacian of random
Schottky surfaces which is optimal, according to a conjecture of Jakobson
and Naud.
\end{abstract}

\maketitle

\section{Introduction\label{sec:Intro}}

This paper addresses the question of whether typical hyperbolic Schottky
surfaces have almost optimal spectral gaps. By hyperbolic Schottky
surface we mean an infinite area Riemannian surface $X$ which is
connected, orientable, complete, of constant curvature $-1$, with
finitely generated fundamental group and without cusps. The spectral
gap refers either to a gap in the spectrum of the Laplace-Beltrami
operator $\Delta_{X}$ or, more generally, to a region in the complex
plane where there are no resonances of $X$. 

Let us explain the background on the spectral theory of Schottky surfaces
needed for this work\footnote{Borthwick's book \cite{borthwick_spectral_2016} is a good reference
for the spectral theory of infinite area hyperbolic surfaces.}. One can obtain $X$ as a quotient $\Gamma\backslash\Hyp$ of the
hyperbolic plane $\Hyp$ by a $\Gamma$ discrete, finitely generated,
non-abelian free subgroup of $\PSL(2,\RR)$. We will consider a basis
$\Bc=\{\gamma_{1},\ldots,\gamma_{N}\}$ of $\Gamma$ having the geometric
description presented in Section \ref{subsec:Fuchsian-Schottky-groups}.
Any $\Gamma$-orbit in $\Hyp$ accumulates on a compact subset $\Lambda_{\Gamma}$
of $\partial\Hyp$ called the \emph{limit set }of $\Gamma$. We denote
by $\gexp$ the Hausdorff dimension of $\Lambda_{\Gamma}$. The assumption
that $\Gamma$ is not abelian and $X$ has infinite area implies that
$\gexp\in(0,1)$.

We consider $\Delta_{X}$ as an unbounded operator on $L^{2}(X)$. It is essentially self-adjoint,
positive semidefinite. By results of Lax-Phillips in
\cite{lax_asymptotic_1982}, the continuous spectrum of $\Delta_{X}$
is the interval $\left[\frac{1}{4},\infty\right)$, and the discrete
spectrum consists of finitely many eigenvalues---counted with multiplicity---in
$\left(0,\frac{1}{4}\right)$. A result of Patterson in \cite{patterson_limit_1976}
says that when $\gexp>\frac{1}{2}$, the bottom of the spectrum of
$\Delta_{X}$ is the simple eigenvalue $\lambda_{0}(X)\eqdf\gexp(1-\gexp)$.
In this case, we define the $L^{2}$-spectral gap of $X$ as $\lambda_{1}(X)-\lambda_{0}(X)$,
where $\lambda_{1}(X)$ is the minimum element of the spectrum other
than $\lambda_{0}(X)$. This definition is not well suited for the
case $\gexp\leq\frac{1}{2}$, since the spectrum of $\Delta_{X}$
is precisely $[\frac{1}{4},\infty)$. For this reason, we will work
with the following more general notion of spectral gap, which applies
regardless of the value of $\gexp$: The resolvent operator
\[
R_{X}(s)\eqdf(\Delta_{X}-s(1-s))^{-1}:C_{c}^{\infty}(X)\rightarrow C^{\infty}(X)
\]
is initially defined on the halfplane $\left\{ s\in\CC\mid\re s>\frac{1}{2}\right\} $.
Mazzeo and Melrose show in \cite{mazzeo_meromorphic_1987} that $R_{X}$
has a  meromorphic continuation to the entire complex plane. See also the papers of Guillop\'e and Zworski \cite{GZ1,GZ2}
for the more specific case of surfaces.

The poles
of this family of operators are called \emph{resonances of $X$} and form a discrete subset of $\CC$. Resonances are the natural replacement data to the missing $L^2$
eigenvalues in non-compact situations. They corresponds to non-$L^2$ eigenfunctions satisfying an outgoing condition at infinity, see \cite{borthwick_spectral_2016} for details.
Let $\Rc_{X}$ be the set of resonances of $X$. The multiplicity
of a resonance $s$ is defined by $\mathrm{rank}\left(\int_{\Cc_{s}}R_{X}\right)$,
where $\Cc_{s}$ is a circle enclosing $s$ and no other resonance
of $X$, with anticlockwise orientation. Resonances $s$ with $\re s>\frac{1}{2}$
yield $L^2$-eigenvalues $s(1-s)$ of $\Delta_{X}$ with the same multiplicity.
Hence the gaps in the discrete spectrum of $\Delta_{X}$ can be thought
as regions of $\CC$ without resonances. This is the point of view
we adopt. Here are some important facts about resonances: $\gexp$
is the only resonance with real part $\geq\gexp$ and it is simple.
Moreover, there is always an $\eps>0$ such that
\[
\Rc_{X}\cap\{s\in\CC\mid\re s>\gexp-\eps\}=\{\gexp\}.
\]
This follows from \cite{lax_asymptotic_1982} when $\gexp>\frac{1}{2}$
and from \cite{naud_expanding_2005} when $\gexp\leq\frac{1}{2}$.
We define the \emph{spectral gap of $X$ }as the maximum value $\eps(\Gamma)$
of such $\eps$. On the one hand, it was shown in \cite{jakobson-naud-soares_LargeDegree_2020}
that $\eps(\Gamma)$ can be arbitrarily small. On the other hand,
a conjecture of Jakobson and Naud \cite[Conjecture 2]{jakobson-naud_CriticalLine_2012}
predicts, among other things, that for any $\epsilon>0$ there are
infinitely many resonances of $X$ with $\re s>\frac{\gexp}{2}-\epsilon$,
so $\eps(\Gamma)\leq\frac{\gexp}{2}$. It is a pressing question as
to whether a typical $X$ has spectral gap close to this conjectural
optimal value. This question has famous analogs in graph theory that
discuss below.

We address this question from a probabilistic perspective, using the
model of hyperbolic Schottky surfaces introduced in \cite{magee-naud_explicit_2020}:
consider a Schottky surface $X=\Gamma\backslash\Hyp$ and let $\phi_{n}$
be a random homomorphism $\Gamma\to S_{n}$ with uniform distribution.
We obtain a random diagonal action of $\Gamma$ on $\HH^{2}\times\{1,2\ldots,n\}$,
by setting for all $\gamma \in \Gamma$,
$$\gamma.(z,j):=(\gamma(z),\phi_n(\gamma)(j)). $$
The quotient 
\[
X_{n}\eqdf\Gamma\backslash(\HH^{2}\times\{1,2\ldots,n\})
\]
is then a random $n$-sheeted Riemannian covering of $X$. Note that if $Y$ is a
finite cover of $X$, the value of $\delta$ is the same for both
surfaces and any resonance of $X$ is a resonance of $Y$ with multiplicity
at least as large as that of $X$. Hence $\eps(Y)$ is at most $\eps(X)$.
The main theorem of this paper is the following.
\begin{thm}
\label{thm:sp_gap_random_Schottky}Let $X$ be a nonelementary, hyperbolic
Schottky surface and let $X_{n}$ be a random $n$-sheeted covering
of $X$ with uniform distribution. For any compact $\Kc$ contained
in the halfplane $\{s\in\CC\mid\re s>\gexp/2\}$, the probability
that $X$ and $X_{n}$ have the same resonances in $\Kc$ with the
same multiplicities tends to $1$ as $n\to\infty$.
\end{thm}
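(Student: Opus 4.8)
The proof will go through the transfer-operator / dynamical-zeta-function approach to resonances, combined with a representation-theoretic expansion of the random permutation covering. Recall that for a Schottky surface $X=\Gamma\backslash\Hyp$ one has a family of Ruelle transfer operators $\Lc_s$ acting on a space of holomorphic functions on the discs bounding the Schottky domain, and the Selberg zeta function $Z_X(s)=\det(I-\Lc_s)$ extends holomorphically to $\CC$; its zeros are exactly the resonances of $X$, with multiplicities. For the cover $X_n$ determined by $\phi_n:\Gamma\to S_n$, the relevant operator acts on $\CC^n$-valued holomorphic functions and decomposes, via the decomposition of the permutation representation $\CC^n = \mathbf{1}\oplus V_n$ (where $V_n$ is the standard $(n-1)$-dimensional representation), as $\Lc_s \oplus \Lc_s^{V_n}$. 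Consequently $Z_{X_n}(s) = Z_X(s)\cdot \det(I - \Lc_s^{V_n})$, and the ``new'' resonances of $X_n$ are precisely the zeros of the second factor. So the theorem reduces to showing that, with probability tending to $1$, the operator $I-\Lc_{s}^{V_n}$ is invertible for all $s$ in the given compact $\Kc\subset\{\re s>\gexp/2\}$.

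First I would set up the twisted transfer operator $\Lc_s^{\rho}$ attached to a unitary representation $\rho$ of $\Gamma$ and record the factorization of the zeta function, together with the standard fact (going back to the thermodynamic formalism for Schottky groups, cf.\ the references in the paper) that $\Lc_s$ is nuclear of order $0$ and that $Z_X$ is entire. Second, I would invoke — or reprove in this twisted setting — the key \emph{a priori} bound: for $\re s > \gexp/2$ one has good control on the essential spectral radius / operator norm of the iterates $(\Lc_s^{\rho})^k$ on appropriate Hilbert–Bergman spaces. The crucial point is that the trace of $(\Lc_s^{V_n})^k$ is a sum over $\Gamma$-words $w$ of length $k$ of the form $(\mathrm{tr}\,\rho_{V_n}(w)) \cdot (\text{geometric weight depending on } s \text{ and the fixed point of } w)$, and $\mathrm{tr}\,\rho_{V_n}(w) = \#\mathrm{Fix}(\phi_n(w)) - 1$. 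Third — and this is the probabilistic heart — I would estimate $\EE[\#\mathrm{Fix}(\phi_n(w))]$ for a non-identity reduced word $w$. By the now-standard results on word measures on symmetric groups (Nica; Linial–Puder; Magee–Puder), for $w$ not a proper power one has $\EE[\#\mathrm{Fix}(\phi_n(w))] = 1 + O(n^{-1})$, and more generally the expected number of fixed points is $1 + O(1/n)$ with an error controlled in terms of the primitivity/Euler-characteristic data of $w$; this feeds into a bound of the form $\EE\,\mathrm{tr}\,(\Lc_s^{V_n})^k = O(1/n)\cdot(\text{summable-in-}k\text{ geometric factor})$ uniformly for $s\in\Kc$. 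Combining with the a priori bound to control the tail in $k$, one gets $\EE\,\|\mathrm{det}(I-\Lc_s^{V_n}) - 1\|$-type estimates, or more robustly an estimate on $\EE\big[\mathrm{tr}((\Lc_s^{V_n})^k)\big]$ that, via the identity $\log\det(I-\Lc_s^{V_n}) = -\sum_k \frac1k \mathrm{tr}((\Lc_s^{V_n})^k)$, shows the zeta factor is uniformly close to $1$ on $\Kc$ in expectation, hence (Markov + a net argument over $\Kc$ using holomorphy and Cauchy estimates) with probability $\to 1$ it is nonvanishing on all of $\Kc$.

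The main obstacle I anticipate is making the probabilistic input \emph{uniform in the word length $k$}: the combinatorial expressions for $\EE[\#\mathrm{Fix}(\phi_n(w))]$ and for higher moments degrade as $|w|_{\mathrm w}$ grows (the implied constants, and the range of $n$ for which the asymptotic is meaningful, depend on $k$), whereas the trace formula for $\mathrm{tr}((\Lc_s^{V_n})^k)$ ranges over words of length $k\to\infty$. Reconciling these requires either (i) a genuinely uniform-in-$k$ bound of shape $\EE[\#\mathrm{Fix}(\phi_n(w))] \le 1 + (C\cdot |w|)^{O(1)}/n$ for \emph{all} reduced $w$, which is the technically demanding part and is where the recent strong forms of the Magee–Puder theory (or a direct combinatorial argument exploiting the free/Schottky structure) must be brought in, together with a careful accounting of proper powers among the words of length $k$ (these contribute a larger main term but are geometrically rare and carry extra decay from the weight $e^{-s\,\ell}$ since $\re s>\gexp/2$ and $Z_X$ has a zero-free region past $\gexp$); or (ii) splitting the sum over $k$ at a threshold $k_0 = c\log n$, handling $k\le k_0$ by the combinatorial estimates and $k>k_0$ by the deterministic a priori spectral bound — this is probably the cleaner route. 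A secondary technical point is upgrading ``nonvanishing in expectation / in probability at each $s$'' to ``nonvanishing simultaneously on the compact set $\Kc$'': here I would use that $\det(I-\Lc_s^{V_n})$ is holomorphic in $s$ with a priori polynomial growth bounds on $\Kc$, take an $\epsilon$-net, and apply a Cauchy-estimate / Montel-type argument, which is routine once the pointwise probabilistic estimate is in hand.
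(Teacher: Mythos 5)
There is a genuine gap, and it is the central one: the route you sketch is essentially the strategy of the earlier Magee--Naud paper and, even if executed flawlessly, plateaus at $\re s > \tfrac{3\gexp}{4}$, not $\re s > \tfrac{\gexp}{2}$. Your reduction (via the Selberg zeta factorization / Proposition \ref{prop:relation_eigenvalues_transf_op}) to showing $I - \Lc_{s,\rho_n^0}$ is invertible on $\Kc$ is correct, and the paper uses exactly this reduction. But the subsequent plan — bound $\EE\,\mathrm{tr}\bigl((\Lc_{s,\rho_n^0})^k\bigr)$ using $\EE[\#\mathrm{Fix}(\phi_n(w))-1] = O(1/n)$, split the $k$-sum at $k_0 \asymp \log n$, and finish the tail with a deterministic a priori bound — is precisely what hits the $\tfrac{3\gexp}{4}$ wall. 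The obstruction: the trace is $\CC$-linear in $\rho(w)$, so estimating it absolutely forces a sum of the form $\sum_{w\in\Gamma_k}\len{w}^{\re s}$, which converges only when $\re s>\gexp$; the probabilistic $O(1/n)$ gain handles $k\lesssim \log n$, but for larger $k$ you fall back on deterministic Hilbert--Schmidt-type bounds on $\Lc_{s,\rho}^k$, and those decay only for $\re s>\tfrac{3\gexp}{4}$ (this is exactly where \cite{magee-naud_explicit_2020} stops). Refining the word-measure input (primitivity, proper powers, Magee--Puder) does not change this threshold, because the bottleneck is the deterministic tail estimate, not the combinatorics for short words.

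The missing idea is to abandon trace/moment estimates entirely and instead exploit that the random permutation representations $\rho_n^0$ \emph{strongly converge} to the left regular representation $\rho_\Gamma$ (Bordenave--Collins, Theorem \ref{thm:BC}). Writing $\Lc_{s,\rho}^\ell = \sum_{w\in\Gamma_\ell} T_{w,s}\otimes\rho(w)$ as an operator-coefficient noncommutative polynomial (Lemma \ref{lem:transf_op_as_sum_of_tensors}), strong convergence plus matrix amplification (Corollary \ref{cor:Bordenave_Collins_plus_epsilon}) says $\opnorm{\Lc_{s,\rho_n^0}^\ell}$ converges a.a.s.\ to the norm of the \emph{deterministic} limit operator $\Tc_{\ell,s}=\sum_w T_{w,s}\otimes\rho_\Gamma(w)$. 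This norm is then controlled by Buchholz's operator-valued Haagerup inequality (Theorem \ref{thm:Buchholz_bound}), which bounds $\opnorm{\Tc_{\ell,s}}$ by an $\ell^2$-type quantity $\bigl(\sum_{w\in\Gamma_\ell}\opnorm{T_{w,s}}^2\bigr)^{1/2} \asymp \bigl(\sum_w \len{w}^{2\re s}\bigr)^{1/2}$, up to a factor $(\ell+1)$. The exponent $2\re s$ (rather than $\re s$) is the whole point: $\sum_{w\in\Gamma_\ell}\len{w}^{2\re s}$ decays geometrically in $\ell$ as soon as $2\re s>\gexp$, i.e.\ $\re s>\tfrac{\gexp}{2}$ (Lemma \ref{lem:exp_sums_of_boundary_lengths}, Proposition \ref{prop:norm_Tks}). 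Your trace-based plan is structurally an $\ell^1$ (or at best moment) bound and cannot see this $\ell^2$ improvement coming from free-group independence; that is why the strong convergence $+$ Haagerup input is indispensable for the optimal threshold, and not merely a stylistic choice.

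Your secondary point — passing from pointwise-in-$s$ to uniform on $\Kc$ via a net and a Lipschitz/holomorphy estimate — is sound and corresponds to Lemma \ref{lem:transf_op_dependence_on_s} plus the finite-net argument at the end of the proof of Theorem \ref{thm:sc_deterministic}; no issue there.
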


This theorem says that a typical finite covering of $X$ has the maximal
possible spectral gap \emph{provided we restrict to resonances of
bounded frequency }(imaginary part)\emph{. }A weaker version of Theorem
\ref{thm:sp_gap_random_Schottky} with $\frac{\gexp}{2}$ replaced
by $\frac{3\gexp}{4}$ was proved in \cite[Thm. 1.1]{magee-naud_explicit_2020}.

Theorem \ref{thm:sp_gap_random_Schottky} together with \cite[Theorem 1.6]{ballmann-matth-mondal_SmallEigenvalues_2017}
implies the following corollary on $L^{2}$ eigenvalues.
\begin{cor}
Let $\Gamma$ be a Fuchsian Schottky group such that $\gexp>\frac{1}{2}$
and the surface\footnote{In this case $X$ is either a pair of pants with three funnels or
a torus with one funnel.} $X=\Gamma\backslash\Hyp$ has Euler characteristic $-1$. Let $X_{n}$
be a uniformly random degree $n$ covering space of $X$. With probability
tending to one as $n\to\infty$, the only eigenvalue of $\Delta_{X_{n}}$
is $\gexp(1-\gexp)$.
\end{cor}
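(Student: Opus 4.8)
The plan is to translate the claim about the $L^{2}$-spectrum of $\Delta_{X_{n}}$ into a claim about resonances of $X_{n}$ inside a \emph{fixed} compact subset of the halfplane $\{s\in\CC\mid\re s>\gexp/2\}$, where Theorem~\ref{thm:sp_gap_random_Schottky} applies, using \cite[Theorem~1.6]{ballmann-matth-mondal_SmallEigenvalues_2017} to control the small eigenvalues of $X$ itself.

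First I would record the eigenvalue--resonance dictionary for $X_{n}$. By Lax--Phillips \cite{lax_asymptotic_1982}, all $L^{2}$-eigenvalues of $\Delta_{X_{n}}$ lie in $(0,\tfrac14)$; and, as recalled in the Introduction, a resonance $s$ of $X_{n}$ with $\re s>\tfrac12$ gives an $L^{2}$-eigenvalue $s(1-s)$ of $\Delta_{X_{n}}$ of the same multiplicity, while conversely every $L^{2}$-eigenvalue $\lambda\in(0,\tfrac14)$ arises this way from the resonance $\tfrac12+\sqrt{\tfrac14-\lambda}$. Moreover, if $s$ is a resonance of $X_{n}$ with $\re s>\tfrac12$ then $s(1-s)$ is real, and writing $s=\sigma+it$ with $\sigma>\tfrac12$ gives $s(1-s)=\sigma(1-\sigma)+t^{2}+it(1-2\sigma)$, which forces $t=0$; so $s$ is real, and since $\gexp$ is the only resonance with $\re s\ge\gexp$ we get $s\in(\tfrac12,\gexp]$. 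Hence the eigenvalues of $\Delta_{X_{n}}$, counted with multiplicity, are precisely the numbers $s(1-s)$ for $s\in\Rc_{X_{n}}\cap(\tfrac12,\gexp]$, and it suffices to prove that, with probability tending to $1$, $\Rc_{X_{n}}\cap(\tfrac12,\gexp]=\{\gexp\}$.

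Next I would pin down the spectrum of $X$ below $\tfrac14$. Since $\gexp>\tfrac12$, Patterson's theorem \cite{patterson_limit_1976} gives that $\gexp(1-\gexp)$ is an eigenvalue of $\Delta_{X}$, and it lies in $(0,\tfrac14)$ because $\gexp\in(\tfrac12,1)$. Since $\chi(X)=-1$, \cite[Theorem~1.6]{ballmann-matth-mondal_SmallEigenvalues_2017} shows that $\Delta_{X}$ has at most $-\chi(X)=1$ eigenvalue below $\tfrac14$; thus $\gexp(1-\gexp)$ is the only one, which by the dictionary above means $\Rc_{X}\cap(\tfrac12,\gexp]=\{\gexp\}$ with $\gexp$ simple. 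To finish I would apply Theorem~\ref{thm:sp_gap_random_Schottky} with $\Kc\eqdf\{s\in\CC\mid|s-\tfrac34|\le\tfrac14\}$: this set is compact, it contains $(\tfrac12,\gexp]$, and every $s\in\Kc$ satisfies $\re s\ge\tfrac12>\gexp/2$ --- this is exactly where $\gexp<1$ enters --- so $\Kc\subset\{s\in\CC\mid\re s>\gexp/2\}$. By Theorem~\ref{thm:sp_gap_random_Schottky}, with probability tending to $1$ the surfaces $X$ and $X_{n}$ have the same resonances in $\Kc$ with the same multiplicities; combining this with the previous paragraph, and with the fact (from the first paragraph) that every resonance of $X_{n}$ with $\re s>\tfrac12$ lies in $(\tfrac12,\gexp]\subset\Kc$, we get $\Rc_{X_{n}}\cap(\tfrac12,\gexp]=\{\gexp\}$, $\gexp$ simple, with probability tending to $1$. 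On this event the only eigenvalue of $\Delta_{X_{n}}$ is $\gexp(1-\gexp)$, as desired.

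This argument is mostly bookkeeping on top of Theorem~\ref{thm:sp_gap_random_Schottky}, so there is no single hard step; the things to get right are: (i) that eigenvalue-producing resonances of $X_{n}$ are automatically real and confined to $(\tfrac12,\gexp]$, a set which --- precisely because $\gexp<1$ --- is contained in a \emph{fixed} compact subset of the region $\{\re s>\gexp/2\}$ handled by Theorem~\ref{thm:sp_gap_random_Schottky}, so that no new eigenvalue of $\Delta_{X_{n}}$ can escape towards $\tfrac14$ as $n\to\infty$; and (ii) invoking \cite[Theorem~1.6]{ballmann-matth-mondal_SmallEigenvalues_2017} in the appropriate generality, for complete finite-type hyperbolic surfaces with funnels, to ensure that $X$ itself has no small eigenvalue besides $\gexp(1-\gexp)$. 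Possible disconnectedness of $X_{n}$ is harmless: the multiplicity matching on $\Kc\ni\gexp$ already forces $\gexp$ to be a simple resonance of $X_{n}$ with probability tending to $1$, hence $X_{n}$ connected, and in any case the set of eigenvalues of $\Delta_{X_{n}}$ does not change if $X_{n}$ is a disjoint union of covers of $X$.
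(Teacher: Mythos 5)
Your proof is correct and takes exactly the approach the paper indicates (the paper merely states that the corollary follows from Theorem~\ref{thm:sp_gap_random_Schottky} together with \cite[Theorem 1.6]{ballmann-matth-mondal_SmallEigenvalues_2017} and gives no further details). You have correctly filled in the bookkeeping: the eigenvalue--resonance dictionary forcing eigenvalue-producing resonances into the real interval $(\tfrac12,\gexp]$, the fact that $\gexp<1$ makes this interval sit inside a fixed compact subset of $\{\re s>\gexp/2\}$, and the use of Ballmann--Matthiesen--Mondal to rule out extra small eigenvalues of $X$ itself.
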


The proof of our spectral gap for random Schottky surfaces relies
on the next result of independent interest. We explain in Section
\ref{subsec:Strateg_of_proof} how these statements are related. See
Section \ref{subsec:Transfer-operators-and} for the definition of
the transfer operators $\Lc_{s,\rho_{j}}$.
\begin{thm}
\label{thm:sc_deterministic}Let $\Gamma$ be a Fuchsian Schottky
group. Suppose that $(\text{\ensuremath{\rho_{j},W_{j})_{j\geq1}}}$
is a sequence of random, finite dimensional unitary representations
of $\Gamma$ that a.a.s. strongly converge to the left regular representation
$(\rho_{\Gamma},\ell^{2}(\Gamma))$ in the sense of Section \ref{subsec:Strong-convergence-and_mat_amp}.
Then, for any compact subset $\Kc$ of the halfplane $\{s\in\CC\mid\re s>\gexp/2\}$
there is an integer $\ell=\ell(\Kc,\Gamma)>0$ such that

\[
\lim_{j\to\infty}\PP\left(\opnorm{\Lc_{s,\rho_{j}}^{\ell}}<1\text{ for all }s\in\Kc\right)=1.
\]
\end{thm}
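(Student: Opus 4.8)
The plan is to reduce the operator-norm bound on $\Lc_{s,\rho_j}^{\ell}$ to a corresponding statement for the left regular representation, via the strong convergence hypothesis, and then to invoke the classical spectral gap of the Selberg zeta function / transfer operator for $\Gamma$ itself. First I would recall that for the trivial one-dimensional representation, the transfer operator $\Lc_{s,\mathrm{triv}}$ acting on a suitable Banach space of holomorphic functions on a neighborhood of the limit set has spectral radius governed by the topological pressure: when $\re s > \gexp/2$, the operator $\Lc_{s/2}^{2}$-type estimates of Naud \cite{naud_expanding_2005} (the Dolgopyat-type bound underlying the resonance-free strip) give that some iterate has operator norm strictly less than $1$ on the relevant space, uniformly for $s$ in a compact subset of $\{\re s > \gexp/2\}$. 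The key point is that $\Lc_{s,\rho}$ is built by tensoring the scalar transfer operator with the representation matrices $\rho(\gamma_i)$, so $\opnorm{\Lc_{s,\rho}^{\ell}}$ can be controlled by the scalar dynamical weights together with the norms of words $\rho(\gamma_{i_1}\cdots\gamma_{i_\ell})$, which are unitary hence have norm $1$ pointwise but whose \emph{averaged} behavior is what strong convergence controls.

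The second and central step is to run the $L^2$-type (Cauchy--Schwarz / Hilbert space) estimate for $\opnorm{\Lc_{s,\rho_j}^{\ell}}$. Expanding the $\ell$-th iterate over words $w$ of length $\ell$ in the generators, one writes $\Lc_{s,\rho_j}^{\ell}$ as a sum over such $w$ of (scalar holomorphic weight depending on $w$ and $s$) times $\rho_j(w)$. Bounding the operator norm on $H\otimes W_j$ and using that the scalar part, after a Ruelle--Perron--Frobenius normalization, behaves like a probability-weighted average, one is left with an expression of the form $\bigl\lVert \sum_w c_w(s)\, \rho_j(w) \bigr\rVert$ where $\sum_w |c_w(s)|$ is close to (a power of) the scalar spectral radius. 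The naive triangle inequality only recovers the scalar bound (hence the $3\gexp/4$ of \cite{magee-naud_explicit_2020}); to reach $\gexp/2$ one must exploit \emph{cancellation} among the $\rho_j(w)$. Here strong convergence enters: it guarantees that for the left regular representation $\rho_\Gamma$ the same weighted sum of $\rho_\Gamma(w)$ has operator norm bounded by the $\ell^2$-norm of the coefficient sequence (this is essentially the $\ell^1 \to$ operator-norm bound for convolution on the free group, the Haagerup / Akemann--Ostrand phenomenon), which is the \emph{square root} of what the triangle inequality gives; and a.a.s. the finite-dimensional $\rho_j$ approximate this operator norm to within any fixed error. Choosing $\ell$ large enough that the scalar pressure estimate beats the loss, one gets $\opnorm{\Lc_{s,\rho_j}^{\ell}}<1$.

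More carefully, I would first prove the deterministic inequality $\opnorm{\Lc_{s,\rho_\Gamma}^{\ell}} \le C(\Kc)\, r(s)^{\ell/2}$ for $s\in\Kc$, where $r(s)<1$ is a scalar quantity tied to the pressure at parameter $\re s$ being $<0$ for $\re s>\gexp/2$ — this is where the halfplane $\{\re s>\gexp/2\}$ is forced, since the pressure $P(\re s) := P(-(\re s)\,\tau)$ of the geodesic flow satisfies $P(\gexp/2)=\cdots$ matching the critical exponent of $\Lambda_\Gamma$ squared. Then one picks $\ell=\ell(\Kc,\Gamma)$ with $C(\Kc)\,r(s)^{\ell/2}<1$ on $\Kc$, with some room to spare, say $<1-\eta$. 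Finally, strong convergence of $(\rho_j,W_j)$ to $(\rho_\Gamma,\ell^2(\Gamma))$ — applied to the finitely many matrix coefficients/words that build $\Lc_{s,\rho_j}^{\ell}$, together with a compactness/net argument in $s\in\Kc$ to make the convergence uniform — yields $\opnorm{\Lc_{s,\rho_j}^{\ell}} \le \opnorm{\Lc_{s,\rho_\Gamma}^{\ell}} + \eta < 1$ on $\Kc$ with probability tending to $1$.

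The main obstacle I anticipate is the passage from the Banach-space (holomorphic functions on a disk neighborhood of $\Lambda_\Gamma$) setting, in which the resonance correspondence and the scalar Dolgopyat bound are naturally phrased, to the Hilbert-space setting in which strong convergence and the free-group $\ell^2$-norm bound operate — one needs a version of the transfer operator acting on (or comparable to) an $L^2$ space so that tensoring with $W_j$ makes sense and so that $\opnorm{\cdot}$ is a genuine Hilbert-space operator norm to which strong convergence applies. This is presumably handled by working with an $L^2$ transfer operator (as in Naud's and Bourgain--Dyatlov-style arguments) and showing its iterates' norms control the holomorphic ones; making the constants and the dependence $\ell=\ell(\Kc,\Gamma)$ uniform over the compact set $\Kc$, and tracking the two distinct exponents (the $r(s)^{\ell/2}$ gain versus what the scalar theory alone provides), is the delicate bookkeeping at the heart of the proof.
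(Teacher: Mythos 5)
Your proposal follows essentially the same route as the paper: expand $\Lc_{s,\rho_j}^{\ell}$ over words of length $\ell$, use a Haagerup-type inequality for the left regular representation to replace the triangle-inequality bound by the $\ell^{2}$-bound $\bigl(\sum_{w}\len{w}^{2\re s}\bigr)^{1/2}$, which decays precisely when $\re s>\gexp/2$ by the Patterson--Sullivan pressure estimate, and then transfer to $\rho_j$ via strong convergence together with a finite net in $s\in\Kc$ and a Lipschitz-in-$s$ bound. The only points to note are that the coefficients multiplying $\rho_\Gamma(w)$ are trace-class operators on the Bergman space rather than scalars, so the paper invokes Buchholz's operator-coefficient version of Haagerup's inequality (plus the compact-operator extension of strong convergence by matrix amplification), and that the Dolgopyat-type scalar input you mention at the outset is not needed anywhere.
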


\begin{rem}
Following the proof of Theorem \ref{thm:sc_deterministic} one can
see that if we assume that $(\rho_{j})_{j\geq1}$ a.s. strongly converge
to $\rho_{\Gamma}$, then there is an $\ell=\ell(\Kc,\Gamma)$ such
that
\[
\sum_{j=1}^{\infty}\left[1-\PP\left(\opnorm{\Lc_{s,\rho_{j}}^{\ell}}<1\text{ for all })s\in\Kc\right)\right]<\infty.
\]
\end{rem}

Theorem \ref{thm:sc_deterministic} shows that the process of `inducing
strong convergence', first uncovered in \cite{hide-magee_NearOptimal}
for finite-area surfaces, is also valid in the infinite volume Schottky
setting in a stronger sense than that of $L^{2}$-eigenvalues.

\subsection{Spectral gaps of random graphs\label{subsec:Spectral-gaps-graphs}}

We now explain the analogy with random graphs---this is also discussed
in detail in the introduction of \cite{magee-naud_explicit_2020}.
We need some notation: If $G$ is a graph with $n$ vertices, we denote
$\lambda_{0}(G)\geq\lambda_{1}(G)\geq\cdots\geq\lambda_{n-1}(G)$
the eigenvalues of its adjacency operator. Note that $\lambda_{0}(G)=d$
when $G$ is $d$-regular. Let $\Gcal_{d,n}$ be a random $d$-regular
graph of size $n$ with uniform distribution. A celebrated conjecture
of Alon \cite{alon_EvExpanders_1986} predicted that for any $\epsilon>0$,
$\lambda_{1}(\Gcal_{d,n})<2\sqrt{d-1}+\epsilon$ with probability
tending to one as $n\to\infty$. The number $2\sqrt{d-1}$ is relevant
for two reasons: it is the spectral radius of the adjacency operator
of the $d$-regular tree---which is the universal cover of any $d$-regular
graph---and also, a result of Alon-Boppana \cite{nili_SecondEv_1991}
says that for any $d$-regular graph on $n$ vertices $G_{d,n}$, 

\[
\lambda_{1}(G_{d,n})\geq2\sqrt{d-1}-o(1)\quad\text{as }n\to\infty.
\]
Hence one cannot replace $2\sqrt{d-1}$ by a smaller number in Alon's
conjeture. This means that, for $d$-regular graphs, $2\sqrt{d-1}$
plays the role of $\frac{\gexp}{2}$ for Schottky surfaces. Even though
the Alon-Boppana bound is not very hard to prove for graphs, we point out that the analog conjecture
of Jakobson-Naud for Schottky surfaces has not been established yet and seems much harder to reach. Alon's
conjecture was proved by Friedman in \cite{friedman_ProofAlonsConj_2008}.
See also \cite{bordenave_NewProof_2020} for a new proof and \cite{puder_Expansion_2015}
for a proof of a slightly weaker result.
Friedman conjectured in \cite{friedman_Relative_2003} a variant of
Alon's conjecture for random covering spaces of degree $n$\footnote{In graph-theoretic literature, these are called $n$-lifts.}
of any initial finite graph $G$ provided that:
\begin{itemize}
\item One replaces $2\sqrt{d-1}$ by the spectral radius $r_{\widetilde{G}}$
of the adjacency operator of the universal cover of $G$;
\item One allows eigenvalues of the adjacency operator that already belonged
to $G$ (as one must).
\end{itemize}
Friedman's conjecture was proved in the breakthrough work \cite{bordenave-collins_EvRandomLifts_2019}
of Bordenave and Collins. In fact, Bordenave-Collins proved a vast
generalization of Friedman's conjecture where one twists a random
Hecke operator, formed from random permutations, by any fixed finite
dimensional matrices, assuming the matrices satisfy a symmetry condition
that forces the resulting operator to be self-adjoint. In fact, the
\cite{bordenave-collins_EvRandomLifts_2019} is a vital ingredient
in the proof of our spectral gap for random Schottky surfaces, as
is explained in Section \ref{subsec:Strateg_of_proof}. 

\subsection{Some related results}

The first result on spectral gap of random hyperbolic surfaces is
due to Brooks and Makover \cite{brooks-makover_Random_2004} who prove,
for a combinatorial model of random closed hyperbolic surfaces depending
on a parameter $n$ that influences the genus (non-deterministically),
that there exists a constant $C>0$ such that $\lambda_{1}\geq C$
with probability tending to one as $n\to\infty$, where $\lambda_{1}$
is the first non-zero eigenvalue of the Laplacian of the surface.
Mirzakhani proved in \cite{mirzakhani_Growth_2013} that for a Weil-Petersson
random closed hyperbolic surface of genus $g$, $\lambda_{1}\geq0.0024$
with probability tending to one as $g\to\infty$. In \cite{magee-naud_explicit_2020},
the second and third named authors of the current work proved that
Theorem \ref{thm:sp_gap_random_Schottky} holds with $\frac{\gexp}{2}$
replaced by $\frac{3\gexp}{4}$.

Returning to closed surfaces, by building on \cite{magee-puder_Asymptotic_2023},
Magee-Naud-Puder proved in \cite{magee-puder-naud_RelSpectralGap_2022}
that for a uniformly random degree $n$ cover $Y_{n}$ of a fixed
closed hyperbolic surface $Y$ and all $\epsilon>0$, the probability
that $Y_{n}$ has no new eigenvalues below $\frac{3}{16}-\epsilon$
tends to $1$ as $n\to\infty$. This result was adapted to Weil-Petersson
random surfaces independently by Wu-Xue \cite{wu-xue_RandHyp_2022}
and Lipnowski-Wright \cite{lipnowski-wright_TowardsOptimal_2024};
here the corresponding statement is that there are no eigenvalues
between $0$ and $\frac{3}{16}-\epsilon$. When it comes \emph{solely}
to $L^{2}$ eigenvalues, these `$\frac{3}{16}$' results are at the
strength of the main result of \cite{magee-naud_explicit_2020} giving
resonance-free regions in terms of $\frac{3\gexp}{4}$: for compact
surfaces, $\gexp=1$ and $\frac{3}{4}\left(1-\frac{3}{4}\right)=\frac{3}{16}$.
On the other hand, closed surfaces involve additional difficulties
due to their non-free fundamental groups.

Some works on Weil-Petersson random surfaces, like \cite{monk_B-SConvergence_2022,gilmore-lemasson-sahlsten-thomas},
do not imply spectral gaps but offer instead spectral delocalization
results .

Uniform spectral gaps for deterministic covering spaces of infinite
area hyperbolic surfaces has also been of interest in number theoretic
settings; see \cite{gamburd_spectral_2002,bourgain-gamburd-sarnak_generalization_2011,oh-winter_uniform_2015,magee-oh-winter_uniform_2019,calderon-magee_explicit_2023}
for a selection of results. The quantitative results here are those
by Gamburd and Calder\'on-Magee. Much of the motivation of these works
came from the `thin groups' research program; see Sarnak's article
\cite{sarnak_NotesThin_2014} for an overview. 

Another closely related concept is that of \emph{essential spectral
gap}. We say that $t\in\RR$ is an essential spectral gap of a hyperbolic
Schottky surface $X=\Gamma\backslash\Hyp$ if the half-plane $\{s\in\CC\mid\re s>t\}$
has finitely many resonances of $X$. Two important results here are
due to Bourgain and Dyatlov: The first, proved in \cite{bourgain_fourier_2017},
says that there is $\epsilon>0$, \emph{depending only on $\gexp$,
}such that $\gexp-\epsilon$ is an essential spectral gap of $X$.
This result is relevant if $\gexp\leq1/2$. On the other hand, it
is proved in \cite{bourgain-dyatlov_SGwithoutPressure_2018} that
$X$ has an essential spectral gap $\frac{1}{2}-\eta$ for some $\eta=\eta(\Gamma)>0$.
This result is relevant if $\gexp>1/2$. One can reformulate the conjecture
of Jakobson-Naud \cite{jakobson-naud_CriticalLine_2012} mentioned
earlier by saying that the optimal essential spectral gap of $X$
is $\frac{\gexp}{2}+\eps$ for any $\varepsilon>0$. We do not know
yet if our probabilistic techniques can be used to address high frequency
problems (i.e. resonances with large imaginary parts) and have not
attempted to do so in the present paper. However, our main result
says that this conjecture holds in the bounded frequency, large cover
regime For a broader perspective on resonances of hyperbolic surfaces
than we are able to offer here, we recommend Zworski's survey article
\cite{zworski_Survey_2017}.

\subsection{Overview of the main proof and structure of the article\label{subsec:Strateg_of_proof}}

The proof of our main result involves tools from three areas: thermodynamic
formalism, random matrices and operator algebras. Let us highlight
the main steps.

We start by recalling how one can study the resonances of deterministic
coverings of a Schottky surface with tools from the thermodynamic
formalism. Consider first a single Schottky surface $X=\Gamma\backslash\Hyp$.
Building on the symbolic coding of the---recurrent part of---geodesic
flow of $X$, the thermodynamic formalism attaches to $\Gamma$ a
family $(\Lc_{\Gamma,s})_{s\in\CC}$ of\emph{ transfer operators}.
These act on $\CC$-valued functions of a union $\Dne$\footnote{See Section \ref{subsec:Fuchsian-Schottky-groups} for the definition
of $\Dne$.} of disks in $\CC$. By choosing an appropriate functional space,
the $(\Lc_{\Gamma,s})_{s\in\CC}$ are trace-class operators on an
infinite dimensional Hilbert space $\Hc_{\Gamma}$, in particular
compact. For us, the key link between resonances and transfer operators
is the following: The resonances of $X$ on the halfplane $\{\re s>0\}$
coincide with the parameters $s$ for which 1 is an eigenvalue of
$\Lc_{\Gamma,s}$. Moreover, the multiplicities are the same.

Consider now the cover $X_{\phi}$ of $X$ associated to a homomorphism
$\phi:\Gamma\to S_{n}$, and let $\rho_{0}$ be the corresponding
unitary representation of $\Gamma$ on $V_{n}^{0}$. Any resonance
$s$ of $X$, say of multiplicity $m_{X}(s)$, is also a resonance
of $X_{\phi}$ and $m_{X_{\phi}}(s)\geq m_{X}(s)$. We say that $s$
is a \emph{new resonance} of $X_{\phi}$ if $m_{X_{\phi}}(s)>m_{X}(s)$\footnote{This definition includes the case where $m_{X}(s)=0$ or, in other
words, $s$ is not a resonance of $X$}. Twisting the classical transfer operators by $\rho_{0}$, we obtain
a family $(\Lc_{s,\rho_{0}})_{s\in\CC}$ of trace-class operators,
on a Hilbert space $\Hc_{\Gamma,n}$ of maps $\Dne\to V_{n}^{0}$,
that detect new resonances. Namely, on the halfplane $\{\re s>0\}$,
$s_{0}$ is a new resonance of $X_{\phi}$ if and only if $1$ is
an eigenvalue of $\Lc_{s_{0},\rho_{0}}$. There is a natural identification
of the space $\Hc_{\Gamma,n}$ with $\Hc_{\Gamma}\otimes V_{n}^{0}$.
Under it, the powers of the transfer operators take the form

\begin{equation}
\Lc_{s,\rho_{0}}^{\ell}=\sum_{\gamma\in\Gamma_{\ell}}T_{\gamma,s}\otimes\rho_{0}(\gamma),\label{eq:transf_op_as_polynomial_intro}
\end{equation}
where the sum is over the set $\Gamma_{\ell}$ of elements of $\Gamma$
of word length $\ell$ and the $T_{\gamma,s}$ are certain trace-class
operators of $\Hc_{\Gamma}$. To understand why random matrices enter
naturally into the proof, it is useful to think of (\ref{eq:transf_op_as_polynomial_intro})
as saying that $\Lc_{s,\rho_{0}}^{\ell}$ is a noncommutative polynomial
with operator coefficients in the variables $x_{\gamma}\eqdf\rho_{0}(\gamma),\gamma\in\Gamma_{1}$.

Let us now add the randomness to the picture: Consider the random,
degree $n$ cover $X_{n}$ of $X$ associated to a uniformly random
homomorphism $\phi_{n}:\Gamma\to S_{n}$. We now have a random unitary
representation $(\rho_{n}^{0},V_{n}^{0})$ of $\Gamma$ and random
transfer operators $\Lc_{s,\rho_{n}^{0}}$. To obtain the ``polynomial
expression'' of $\Lc_{s,\rho_{n}^{0}}^{\ell}$ one replaces $\rho_{0}$
by $\rho_{n}^{0}$ in (\ref{eq:transf_op_as_polynomial_intro}); the
coefficients $T_{\gamma,s}$ remain the same. Our strategy to show
that, most likely, a single $s_{0}$ is not a new resonance of $X_{n}$
is finding an integer $\ell>0$ such that $\opnorm{\Lc_{s,\rho_{n}^{0}}^{\ell}}<1$
with high probability. We achieve this is by feeding into Theorem
\ref{thm:sc_deterministic} the main theorem of \cite{bordenave-collins_EvRandomLifts_2019},
a deep result on random matrices---which is also a vital tool in
\cite{hide-magee_NearOptimal}---saying that the random representations
$\rho_{n}^{0}$ almost surely tend, in a suitable sense, to the left
regular representation $\rho_{\Gamma}$ of $\Gamma$. This implies,
from the ``polynomial expressions'' of $\Lc_{s,\rho_{n}^{0}}^{\ell}$,
that $\opnorm{\Lc_{s_{0},\rho_{n}^{0}}^{\ell}}$ tends in probability,
as $n\to\infty$, to the norm of the deterministic operator $\Lc_{s_{0},\rho_{\Gamma}}^{\ell}$
on $\Hc_{\Gamma}\otimes\ell^{2}(\Gamma)$, defined as in (\ref{eq:transf_op_as_polynomial_intro}),
and which we denote by $\Tc_{\ell,s_{0}}$ in the article. 

Applying an upper bound of Buchholz for the norm of an operator of
the form
\[
\sum_{\gamma\in\Gamma_{\ell}}T_{\gamma}\otimes\rho_{\Gamma}(\gamma),
\]
we show that whenever $\re s_{0}>\frac{\gexp}{2}$, $\opnorm{\Tc_{\ell,s_{0}}}\to0$
as $\ell\to\infty$ with a rate of convergence. Hence $\opnorm{\Lc_{s_{0},\rho_{n}^{0}}^{\ell}}<1$
with high probability for any big enough $\ell$. To improve the result
from a single $s_{0}$ to $s_{0}$ in a fixed compact subset of the
halfplane $\{\re s>\gexp/2\}$, we apply a rough estimate of the dependence
of the transfer operators on the parameter $s$. 

The rest of the article is divided into five sections. Section \ref{sec:Preliminaries}
covers the background on hyperbolic Schottky surfaces needed for our
main result: a geometric definition of Fuchsian Schottky group, the
model we use of random Schottky surfaces and the link between transfer
operators and resonances. Section \ref{sec:Deterministic-a-priori}
gathers several technical estimates that we use later in the article.
The goal of Section \ref{sec:Strong_convergence} is to prove Theorem
\ref{thm:sc_deterministic}: we define strong convergence, we justify
why the norm of the random transfer operators of $X_{n}$ are controlled
by that of the limit operators $\Tc_{\ell,s}$ and we state our main
norm bound for the limit operators. This last bound is Proposition
\ref{prop:norm_Tks}, and is proved in Section \ref{sec:Norm-estimation-of}.
Finally, in Section \ref{sec:Proof-sp_gap_Schottky} we state the
result of Bordenave-Collins and we complete the proof of Theorem \ref{thm:sp_gap_random_Schottky}.

\section{Preliminaries\label{sec:Preliminaries}}

This section contains the background on Schottky groups and surfaces
needed for this work: the basic definitions and notation, the description
of the model of random Schottky surfaces we consider, and the link
between resonances and transfer operators.

\subsection{Fuchsian Schottky groups\label{subsec:Fuchsian-Schottky-groups}}

Here we give a geometric definition of Fuchsian Schottky groups, we
recall their basic properties and fix the notation we use to work
with them. 

We consider the action of $\PSL(2,\RR)$ by M\"obius transformations
on the extended complex plane $\widehat{\CC}=\CC\cup\{\infty\}$ given
by 
\[
\begin{pmatrix}a & b\\
c & d
\end{pmatrix}z=\frac{az+b}{cz+d}.
\]
This action preserves the upper halfplane 
\[
\Hyp=\{z\in\CC\mid\im z>0\},
\]
which we endow with the Riemannian metric $\frac{dx^{2}+dy^{2}}{y^{2}}$
in coordinates $z=x+iy$---this is the upper halfplane model of the
hyperbolic plane---. Moreover, $\PSL(2,\RR)\curvearrowright\Hyp$
is isometric, and it identifies the group of orientation preserving
isometries of $\Hyp$ with $\PSL(2,\RR)$. If $\Gamma$ is a discrete
subgroup of $\PSL(2,\RR)$ without torsion, the quotient $\Gamma\backslash\Hyp$
is a complete, connected and orientable hyperbolic surface\footnote{By hyperbolic surface we mean a Riemannian surface of constant curvature
$-1$.}. Conversely, any such surface can be represented as $\Gamma\backslash\Hyp$,
for some $\Gamma$ as before.

The focus of this work are the hyperbolic surfaces associated to Fuchsian
Schottky groups, which are defined as follows: Consider an integer
$N\geq2$ and let $\mathcal{\Ac}=\{1,\ldots,2N\}$. If $a\in\Ac$,
we denote $\mirror a$ the unique element of $\Ac$ such that $\mirror a\equiv a+N\pmod{2N}$.
Consider open disks $(D_{a})_{a\in\Ac}$ in $\CC$ centered in $\RR$
and elements $(\gamma_{a})_{a\in\Ac}$ of $\PSL(2,\RR)$ such that
$\gamma_{\mirror a}=\gamma_{a}^{-1}$, and verifying the following
conditions: 
\begin{enumerate}
\item The closures of the $(D_{a})_{a\in\Ac}$ are pairwise disjoint;
\item The image of $\widehat{\CC}\backslash D_{\widetilde{a}}$ under $\gamma_{a}$
is the closure of $D_{a}$.
\end{enumerate}
We write $\Dne$ for the union $\cup_{a\in\Ac}D_{a}$, and $I_{a}$
for the open interval $D_{a}\cap\RR$. The group $\Gamma$ generated
by $(\gamma_{a})_{a\in\Ac}$ is free with basis $\gamma_{1},\ldots,\gamma_{N}$
by Klein's Ping-Pong Lemma, since the $(D_{a})_{a\in\Ac}$ are disjoint.
It is also a discrete subgroup of $\PSL(2,\RR)$ since $\widehat{\CC}-\Dne$
is a fundamental domain of $\Gamma\curvearrowright\widehat{\CC}$,
and condition (1) above guarantees that $\Gamma$ is convex cocompact. 

For us, a \emph{Fuchsian Schottky group }is a subgroup of $\PSL(2,\RR)$
admitting a description as the $\Gamma$ above. As proved by Button
in \cite{button_all_1998}, any convex cocompact, non cocompact Fuchsian
group is a Fuchsian Schottky group.

Let $\Gamma$ be a Fuchsian Schottky group. We denote by $N_{\Gamma}$
the rank of $\Gamma$. We assume always that $\Gamma$ comes equipped
with a fixed choice of Schottky data $(D_{a})_{a\in\Ac}$ and $(\gamma_{a})_{a\in\Ac}$.
Let $\Bc=\{\gamma_{1},\ldots,\gamma_{N_{\Gamma}}\}$. From now on
we identify each $a\in\Ac$ with the corresponding $\gamma_{a}$.
So, $\Bc$ is a basis of $\Gamma$ and $\Ac=\Bc\sqcup\Bc\inv$. Let
$\wlength{\cdot}$ be the word length on $\Gamma$ with respect to
$\Ac$. We respectively denote $\Gamma_{n}$ and $\Gamma_{\geq n}$
the sets of $\gamma\in\Gamma$ of word length $=n$ and $\geq n$.
The notation $\gamma_{1}\to\gamma_{2}$ means that $\wlength{\gamma_{1}\gamma_{2}}=\wlength{\gamma_{1}}+\wlength{\gamma_{2}}$. 

Consider $\gamma\in\Gamma_{n}$ with $n\geq1$ and its expression
$a_{1}\cdots a_{n}$ with the $a_{j}\in\Ac$. Sometimes we refer to
the $a_{j}$ as the \emph{letters of $\gamma$,} and to $\gamma$
as a \emph{word} in the alphabet $\Ac$. The first and last letters
$a_{1}$ and $a_{n}$ of $\gamma$ will be respectively denoted $\Start{\gamma}$
and $\End{\gamma}$, while $\backspace{\gamma}$ stands for the word
$a_{1}\cdots a_{n-1}$ obtained by erasing the last letter of $\gamma$.
We associate to each $\gamma$ an open disk and an interval:
\[
D_{\gamma}=\backspace{\gamma}D_{\End{\gamma}},\quad I_{\gamma}=D_{\gamma}\cap\RR.
\]
We denote by $\len{\gamma}$ the length of $I_{\gamma}$.

\subsection{Random covering spaces\label{subsec:Random-covering-spaces}}

Here we recall the model of random Schottky surface we work with,
which was introduced in \cite{magee-naud_explicit_2020}.

Consider a Fuchsian Schottky group $\Gamma$, the corresponding hyperbolic
surface $X=\Gamma\backslash\Hyp$ and let $S_{n}$ be the group of
permutations of $[n]\eqdf\{1,2,\ldots,n\}$. For any homomorphism
$\phi:\Gamma\to S_{n}$, we consider the diagonal action $\gamma(z,j)=(\gamma z,\phi(\gamma)j)$
of $\Gamma$ on $\Hyp\times[n]$. The quotient
\[
X_{\phi}\eqdf\Gamma\backslash(\Hyp\times[n])
\]
is a hyperbolic surface, and the projection $\Hyp\times[n]\to\Hyp$
induces an $n$-sheeted covering map $X_{\phi}\to X$. We denote by
$V_{n}$ the Hilbert space $\ell^{2}([n])$\footnote{This is the Hilbert space of maps $[n]\to\CC$ with the inner product
$\scal fg=\sum_{j}f(j)\overline{g(j)}.$}, and $V_{n}^{0}$ the subspace of maps $f:[n]\to\CC$ such that $\sum_{j\in[n]}f(j)=0$.
Note that $\phi$ induces unitary representations of $\Gamma$ on
$V_{n}$ and $V_{n}^{0}$.

Since a homomorphism $\Gamma\to S_{n}$ is completely determined by
its restriction to the basis $\Bc$, $\mathrm{Hom}(\Gamma,S_{n})$
is finite and hence we can talk about the uniform probability measure
on it. Throughout the article, $\phi_{n}$ is a \emph{uniformly random
homomorphism} $\Gamma\to S_{n}$---in other words, a random variable
on $\mathrm{Hom}(\Gamma,S_{n})$ with uniform distribution---and
$X_{n}$ is the corresponding random $n$-sheeted covering space of
$X$. We write $\rho_{n}$ and $\rho_{n}^{0}$ for the random unitary
representations of $\Gamma$ on $V_{n}$ and $V_{n}^{0}$ coming from
$\phi_{n}$.

\subsection{Conventions and notation for Hilbert spaces\label{subsec:Conventions-and-notation}}

Here fix the notation and conventions on Hilbert spaces we use throughout
the article. 

All the Hilbert spaces we consider are assumed to be complex. Let
$V$ and $W$ be Hilbert spaces. We denote $\scal{\cdot}{\cdot}_{V}$
the inner product of $V$---with the convention that it is $\CC$-linear
in the first coordinate and $\CC$-antilinear in the second.---,
and $\norm{\cdot}_{V}$ is the norm coming from the inner product.Let
$B(V)$ be the group of bounded linear operators on $V$. The unitary
group $U(V)$ of $V$ is the group of operators on $V$ preserving
$\scal{\cdot}{\cdot}_{V}$. We denote by $\opnorm T$ the operator
norm of any linear map $T:V\to W$ with respect to the norms of the
inner products. When $V=W$, sometimes we write $\norm T_{V}$ instead
of $\opnorm T$ to emphasize the space on which $T$ acts. We write
$T^{*}$ for the adjoint of $T$. The tensor product $V\otimes W$---suitably
completed---is also a Hilbert space with the inner product

\[
\scal{v_{1}\otimes w_{1}}{v_{2}\otimes w_{2}}_{V\otimes W}\eqdf\scal{v_{1}}{v_{2}}_{V}\scal{w_{1}}{w_{2}}_{W}.
\]
Recall that a unitary representation of a group $\Gamma$ on $V$
is a homomorphism $\rho:\Gamma\to U(V)$. We often write the unitary
representations as a pair $(\rho,V)$.

For any $r\geq1$, we consider $\CC^{r}$ as Hilbert space with the
standard inner product 
\[
\scal{(z_{1},\ldots,z_{r})}{(w_{1},\ldots,w_{r})}=\sum_{j}z_{j}\overline{w_{j}}.
\]
We usually write $M_{r}(\CC)$ instead of $B(\CC^{r})$.

\subsection{Transfer operators and their relation to resonances\label{subsec:Transfer-operators-and}}

Here we introduce transfer operators, the main tool we use to detect
resonances of Schottky surfaces. These are a family $(\Lc_{s,\rho})_{s\in\CC}$
of operators associated to any unitary representation $(\rho,V)$
of a Fuchsian Schottky group $\Gamma$, acting on a certain space
$\Fc_{\rho}$ of functions $\Dne\to V$. The relation between transfer
operators and resonances we use to prove our main result is stated
in Proposition \ref{prop:relation_eigenvalues_transf_op}: when $\rho$
comes from a homomorphism $\phi:\Gamma\to S_{n}$, we will use the
$\Lc_{s,\rho}$ to study the resonances of $X_{\phi}$. In order to
do so, one has to choose a functional space $\Fc_{\rho}$ where the
transfer operators behave well. Like in \cite{magee-naud_explicit_2020},
our choice of $\Fc_{\rho}$ is a Bergman space. We give also the ``polynomial
expression'' of the powers of $\Lc_{s,\rho}$ with operator coefficients,
which is key for the proof of our main result. 

Let $\Gamma$ be a Fuchsian Schottky group and let $V$ be a Hilbert
space of finite dimension. For any unitary representation $(\rho,V)$
of $\Gamma$ and all $s\in\CC$, we define the operator $\Lc_{s,\rho}$
on measurable functions $f:\Dne\to V$ by 
\begin{equation}
\Lc_{s,\rho}f(z_{a})\eqdf\sum_{\substack{w\in\Gamma_{2}\\
\End w=a
}
}\backspace w'(z_{a})^{s}\rho(\backspace w\inv)f(\backspace wz_{a}).\label{eq:def_transf_op}
\end{equation}
We clarify two points about this definition: the subindex in $z_{a}$
means that $z_{a}$ belongs to $D_{a}$, and $\backspace w'(z_{a})^{s}\eqdf\exp(s\tau(\backspace w'(z_{a})))$,
where $\tau$ is the branch of the logarithm
\begin{equation}
\CC-(-\infty,0]\to\RR\oplus i(-\pi,\pi).\label{eq:branch_log}
\end{equation}

The Bergman space $\Hc(\Dne,V)$ is the Hilbert space of square integrable\footnote{With respect to the Lebesgue measure of $\Dne$.},
holomorphic maps $\Dne\to V$ with the inner product
\[
\scal{f_{1}}{f_{2}}\eqdf\int_{\Dne}\scal{f_{1}(z)}{f_{2}(z)}_{V}\dd z.
\]
Note that $\Hc(\Dne,V)$ is stable under $\Lc_{s,\rho}$. In this
work, we take $\Hc(\Dne,V)$ as the domain of the transfer operators.
This guarantees that $\Lc_{s,\rho}$ is trace-class for any $s\in\CC$---see
\cite[Corollary 4.2]{magee-naud_explicit_2020}---, so compact in
particular.

Here is the relation between resonances and transfer operators that
we use to establish our main result. It follows from the correspondence
between resonances of a hyperbolic Schottky surface and the zeros
of its Selberg zeta function---see \cite[Theorem 10.1]{borthwick_spectral_2016}---,
and the factorization \cite[Proposition 4.4 (2)]{magee-naud_explicit_2020}
of the Selberg zeta function of $X_{\phi}$.
\begin{prop}
\label{prop:relation_eigenvalues_transf_op}Let $\phi$ be a homomorphism
from a Fuchsian Schottky group $\Gamma$ to $S_{n}$, and let $\rho_{0}$
be the corresponding unitary representation of $\Gamma$ on $V_{n}^{0}$.
If $s\in\CC$ is a resonance of $X_{\phi}$ with greater multiplicity
than in $X$, then 1 is an eigenvalue of $\Lc_{s,\rho_{0}}$.
\end{prop}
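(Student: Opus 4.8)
The plan is to deduce the statement from two results cited in the excerpt: the correspondence between resonances of a hyperbolic Schottky surface and zeros of its Selberg zeta function, counted with multiplicity (\cite[Theorem 10.1]{borthwick_spectral_2016}), and the factorization of the Selberg zeta function of the cover $X_{\phi}$ (\cite[Proposition 4.4 (2)]{magee-naud_explicit_2020}). Write $Z_{Y}$ for the Selberg zeta function of a hyperbolic Schottky surface $Y$. First I would record that the permutation representation $\rho_{n}$ of $\Gamma$ on $V_{n}=\ell^{2}([n])$ attached to $\phi$ splits as $\mathbf{1}\oplus\rho_{0}$, with $\mathbf{1}$ the trivial one-dimensional representation; correspondingly $\Hc(\Dne,V_{n})=\Hc(\Dne,\CC)\oplus\Hc(\Dne,V_{n}^{0})$ and the transfer operator respects this splitting, $\Lc_{s,\rho_{n}}=\Lc_{s,\mathbf{1}}\oplus\Lc_{s,\rho_{0}}$. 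Since each $\Lc_{s,\rho}$ is trace-class on the Bergman space, multiplicativity of Fredholm determinants together with the dynamical-determinant identities $Z_{X}(s)=\det(I-\Lc_{s,\mathbf{1}})$ and $Z_{X_{\phi}}(s)=\det(I-\Lc_{s,\rho_{n}})$ --- the thermodynamic-formalism computation that writes $\det(I-\Lc)=\exp(-\sum_{\ell\geq1}\ell^{-1}\tr\Lc^{\ell})$ and identifies $\tr\Lc_{s,\rho}^{\ell}$ with a sum over length-$\ell$ closed geodesics weighted by the character of $\rho$ --- give
\[
Z_{X_{\phi}}(s)=\det(I-\Lc_{s,\rho_{n}})=\det(I-\Lc_{s,\mathbf{1}})\,\det(I-\Lc_{s,\rho_{0}})=Z_{X}(s)\,\det(I-\Lc_{s,\rho_{0}}),
\]
which is the content of \cite[Proposition 4.4 (2)]{magee-naud_explicit_2020}.

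Granting this factorization, the rest is a comparison of vanishing orders. Let $s_{0}$ be a resonance of $X_{\phi}$ with $m_{X_{\phi}}(s_{0})>m_{X}(s_{0})$ (allowing $m_{X}(s_{0})=0$). Taking orders of vanishing at $s_{0}$ in the factorization gives
\[
\mathrm{ord}_{s_{0}}\det(I-\Lc_{s,\rho_{0}})=\mathrm{ord}_{s_{0}}Z_{X_{\phi}}-\mathrm{ord}_{s_{0}}Z_{X}.
\]
By \cite[Theorem 10.1]{borthwick_spectral_2016}, for $\re s_{0}>0$ --- the only range used elsewhere in the paper --- one has $\mathrm{ord}_{s_{0}}Z_{Y}=m_{Y}(s_{0})$, so the right-hand side equals $m_{X_{\phi}}(s_{0})-m_{X}(s_{0})>0$; for general $s_{0}\in\CC$ it instead equals $m_{X_{\phi}}(s_{0})-m_{X}(s_{0})+(n-1)\tau(s_{0})$, where $\tau(s_{0})\geq0$ is the multiplicity of the topological zero of $Z_{X}$ at $s_{0}$ --- these occur only at non-positive integers and are multiplicative in the covering degree since $\chi(X_{\phi})=n\chi(X)$ --- so the right-hand side is positive in every case. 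Hence $\det(I-\Lc_{s_{0},\rho_{0}})=0$. Finally, since $\Lc_{s_{0},\rho_{0}}$ is trace-class one has $\det(I-\Lc_{s_{0},\rho_{0}})=\prod_{j}(1-\lambda_{j})$ over its eigenvalues $\lambda_{j}$ counted with algebraic multiplicity, so the vanishing forces $\lambda_{j}=1$ for some $j$; being in particular compact, $\Lc_{s_{0},\rho_{0}}$ then has $1$ as a genuine eigenvalue, which is the assertion.

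The statement is only the ``easy'' implication of the resonance/transfer-operator dictionary, and no quantitative estimate enters; essentially everything is packaged in the two cited facts. The step that would require real care in a self-contained account is the dynamical-determinant identity: one must know that this choice of functional space makes $\Lc_{s,\rho}$ trace-class with a well-defined Fredholm determinant, and that the resulting dynamical determinant genuinely coincides with the Selberg zeta function of the relevant surface (equivalently, that the symbolic coding of the recurrent part of the geodesic flow is lossless). Both points are established in \cite[\S4]{magee-naud_explicit_2020}, so for the present proposition it is enough to invoke them.
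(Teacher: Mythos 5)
Your proposal is correct and follows exactly the route the paper indicates: the paper gives no written-out proof but cites \cite[Theorem~10.1]{borthwick_spectral_2016} (resonances~$=$~zeros of $Z_Y$ with matching multiplicities, modulo topological zeros) and \cite[Proposition~4.4(2)]{magee-naud_explicit_2020} (the factorization $Z_{X_\phi}=Z_X\cdot\det(I-\Lc_{s,\rho_0})$ coming from $\rho_n=\mathbf{1}\oplus\rho_0$), and you have simply unpacked that argument, including the correct handling of topological zeros via multiplicativity of Euler characteristic and the standard Fredholm-determinant-to-eigenvalue step for a trace-class operator.
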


Now we give a convenient formula for the powers of $\Lc_{s,\rho}$,
and then the ``polynomial expression'' of $\Lc_{s,\rho}^{\ell}$
with operator coefficients. 

We claim that for any integer $\ell>0$ and all $f\in\Hc(\Dne,V)$
we have
\begin{equation}
\Lc_{s,\rho}^{\ell}f(z_{a})=\sum_{\substack{w\in\Gamma_{\ell+1}\\
\End w=a
}
}\backspace w'(z_{a})^{s}\rho(\backspace w\inv)f(\backspace wz_{a}).\label{eq:powers_of_transf_op}
\end{equation}
Indeed, consider $w\in\Gamma_{\ell+1}$, say $w=a_{1}\cdots a_{\ell+1}$
with $a_{j}\in\Ac$. We denote $a_{j}a_{j+1}\cdots a_{\ell}$ by $w_{j\rightarrow}$.
For any $z\in D_{a_{\ell+1}}$, let
\begin{equation}
\theta(w,z)\eqdf\sum_{j=1}^{\ell}\tau(a_{j}'(w_{j+1\rightarrow}z)).\label{eq:def_theta}
\end{equation}
Iterating (\ref{eq:def_transf_op}) one obtains
\begin{equation}
\Lc_{s,\rho}^{\ell}f(z_{a})=\sum_{\substack{w\in\Gamma_{\ell+1}\\
\End w=a
}
}\exp(s\theta(w,z_{a}))\rho(\backspace w\inv)f(\backspace wz_{a}),\label{eq:powers_of_transf_op_bis}
\end{equation}
for any $a\in\Ac$. When $z_{a}$ is real, the terms $a_{j}'(w_{j+1\rightarrow}z_{a})$
in (\ref{eq:def_theta}) are real as well, so
\[
\theta(w,z_{a})=\tau\left(\prod_{j=1}^{\ell-1}a_{j}'(w_{j+1\rightarrow}z_{a})\right)=\tau(\backspace w'z_{a}).
\]
Since $\theta(w,\cdot)$ and $\tau\circ\backspace w'$ coincide in
$I_{a}$ and are holomorphic on $D_{a}$, in fact they coincide on
$D_{a}$. Hence $\exp(s\theta(w,z_{a}))=\backspace w'(z_{a})^{s}$
for all $s\in\CC$, and (\ref{eq:powers_of_transf_op}) follows.

The ``polynomial expression'' of $\Lc_{s,\rho}$ and its powers
is essentially due to the following isomorphism of Hilbert spaces:
For any $\varphi\in\Hc(\Dne,V)$ and any $v\in V$, the map $\varphi_{v}(z)\eqdf\scal{\varphi(z)}v$
lies in $\Hc(\Dne)$. Consider an orthonormal basis $(v_{i})_{i}$
of $V$. The map
\begin{equation}
\varphi\mapsto\sum_{i}\varphi_{v_{i}}\otimes v_{i}\label{eq:isomorphism_Bergmann_spaces}
\end{equation}
is an isomorphism $\Hc(\Dne,V)\to\Hc(\Dne)\otimes V$, and it does
not depend on the choice of orthonormal basis of $V$. Here are the
coefficients of the ``polynomial expression'' of $\Lc_{s,\rho}^{\ell}$:
For any $w\in\Gamma_{\geq2}$ and any $s\in\CC$ we define $\widetilde{M}_{w,s}:\Hc(D_{S(w)})\to\Hc(D_{E(w)})$
by
\begin{equation}
\widetilde{M}_{w,s}\psi_{S(w)}(z_{E(w)})\eqdf\backspace w'(z_{\End w})^{s}\psi_{S(w)}(\backspace wz_{E(w)}),\label{eq:def_Mws}
\end{equation}
with the $s$-th power defined as in (\ref{eq:def_transf_op}). Let
$\Ptt_{a}$ be the orthogonal projection $\Hc(\Dne)\to\Hc(D_{a})$.
The operators 
\[
M_{w,s}\eqdf\Ptt_{\End w}^{*}\widetilde{M}_{w,s}\Ptt_{\Start w}
\]
are trace-class, as is shown in \cite[Lemma 15.7]{borthwick_spectral_2016}.
\begin{lem}
\label{lem:transf_op_as_sum_of_tensors}Let $(\rho,V)$ be a unitary
representation of $\Gamma$. For any integer $\ell\geq1$, under the
isomorphism $\Hc(\mathbf{D},V)\simeq\Hc(\Dne)\otimes V$ given by
(\ref{eq:isomorphism_Bergmann_spaces}), we have
\begin{equation}
\Lc_{s,\rho}^{\ell}=\sum_{w\in\Gamma_{\ell+1}}M_{w,s}\otimes\rho(\backspace w\inv).\label{eq:transf_op_as_sum_of_tensors}
\end{equation}
\end{lem}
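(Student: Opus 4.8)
The plan is to reduce the claimed identity to the formula (\ref{eq:powers_of_transf_op}) for the $\ell$-th power of $\Lc_{s,\rho}$ and then merely reorganize the resulting sum, which is a \emph{finite} sum since $\Gamma_{\ell+1}$ is a finite set. Both sides of (\ref{eq:transf_op_as_sum_of_tensors}) are everywhere-defined bounded operators on $\Hc(\Dne)\otimes V$ ($\Lc_{s,\rho}$ is trace-class, and each $M_{w,s}$ is trace-class), and since $V$ is finite dimensional the elementary tensors $\psi\otimes v$ with $\psi\in\Hc(\Dne)$, $v\in V$ span $\Hc(\Dne)\otimes V$ algebraically; so by linearity it suffices to check that the two sides agree on one such tensor. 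Under the isomorphism (\ref{eq:isomorphism_Bergmann_spaces}), the tensor $\psi\otimes v$ corresponds to the map $f\in\Hc(\Dne,V)$ given by $f(z)=\psi(z)v$.

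Next I would apply (\ref{eq:powers_of_transf_op}) to this $f$. For $z_{a}\in D_{a}$, using that $\rho(\backspace w\inv)$ is linear and $\psi(\backspace wz_{a})$ a scalar, we get
\[
\Lc_{s,\rho}^{\ell}f(z_{a})=\sum_{\substack{w\in\Gamma_{\ell+1}\\ \End w=a}}\backspace w'(z_{a})^{s}\,\psi(\backspace wz_{a})\,\rho(\backspace w\inv)v .
\]
The one geometric fact needed is the nesting of Schottky disks $\backspace wz_{a}\in\backspace wD_{\End w}=D_{w}\subseteq D_{\Start w}$, immediate from conditions (1)--(2) on the $(D_{a})_{a\in\Ac}$; it lets us rewrite $\psi(\backspace wz_{a})=(\Ptt_{\Start w}\psi)(\backspace wz_{a})$. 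Comparing with (\ref{eq:def_Mws}), the product $\backspace w'(z_{a})^{s}\,\psi(\backspace wz_{a})$ is precisely $\big(\widetilde{M}_{w,s}\,\Ptt_{\Start w}\psi\big)(z_{a})$, an element of $\Hc(D_{\End w})$; promoting it to $\Hc(\Dne)$ by $\Ptt_{\End w}^{*}$ turns it into $M_{w,s}\psi$, which vanishes on every $D_{a'}$ with $a'\neq\End w$. Summing over the end letters $a=\End w$, we conclude that $\Lc_{s,\rho}^{\ell}f$ is the function $z\mapsto\sum_{w\in\Gamma_{\ell+1}}(M_{w,s}\psi)(z)\,\rho(\backspace w\inv)v$.

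Finally, I would observe that the right-hand side of (\ref{eq:transf_op_as_sum_of_tensors}) applied to $\psi\otimes v$ equals $\sum_{w\in\Gamma_{\ell+1}}(M_{w,s}\psi)\otimes\rho(\backspace w\inv)v$, which under (\ref{eq:isomorphism_Bergmann_spaces}) corresponds to exactly the same function $z\mapsto\sum_{w\in\Gamma_{\ell+1}}(M_{w,s}\psi)(z)\,\rho(\backspace w\inv)v$. Hence the two operators agree on $\psi\otimes v$, and therefore on all of $\Hc(\Dne)\otimes V$, which is (\ref{eq:transf_op_as_sum_of_tensors}). I do not anticipate a genuine difficulty: the entire content is the bookkeeping that matches $\backspace w'(z_{a})^{s}\psi(\backspace wz_{a})$ with $M_{w,s}\psi$, where the orthogonal projections $\Ptt_{a}$ have to be inserted in exactly the right spots and one must track, via the nesting of the $D_{w}$, which disk the point $\backspace wz_{a}$ lies in.
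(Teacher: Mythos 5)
Your proposal is correct and follows essentially the same route as the paper: reduce to elementary tensors $\psi\otimes v$, apply the power formula (\ref{eq:powers_of_transf_op}), and identify $\backspace w'(z_{a})^{s}\psi(\backspace wz_{a})$ with $M_{w,s}\psi$ via the disk nesting and the projections $\Ptt_{a}$. You spell out the projection bookkeeping and the nesting $D_{w}\subseteq D_{\Start w}$ a bit more explicitly than the paper does, but the argument is the same.
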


\begin{proof}
We identify $\Hc(\Dne,V)$ and $\Hc(\Dne)\otimes V$ via (\ref{eq:isomorphism_Bergmann_spaces}).
Let us denote by $\Kc_{s,\rho}^{(\ell)}$ the operator on the right-hand-side
of (\ref{eq:transf_op_as_sum_of_tensors}). It suffices to show that
$\Lc_{s,\rho}^{\ell}$ and $\Kc_{s,\rho}^{(\ell)}$ agree on all the
$f\in\Hc(\Dne,V)$ of the form $\varphi\otimes v$, with $\varphi\in\Hc(\Dne$)
and $v$ any unit vector in $V$. For any such $f$ and all $b\in\Ac$,
by (\ref{eq:powers_of_transf_op_bis}) we have

\begin{align*}
[\Lc_{s,\rho}^{\ell}f](z_{b}) & =\sum_{\substack{w\in\Gamma_{\ell+1}\\
\End w=b
}
}\backspace w'(z_{b})^{s}\rho(\backspace w\inv)f(\backspace wz_{b})\\
 & =\sum_{\substack{w\in\Gamma_{\ell+1}\\
\End w=b
}
}\backspace w'(z_{b})^{s}\varphi(\backspace wz_{b})\rho(\backspace w\inv)v\\
 & =\left(\sum_{\substack{w\in\Gamma_{\ell+1}\\
\End w=b
}
}M_{w,s}\otimes\rho(\backspace w\inv)\right)f(z_{b})\\
 & =[\Ptt_{b}\Kc_{s,\rho}^{(\ell)}f](z_{b}),
\end{align*}
which proves our claim.
\end{proof}

\section{Deterministic a priori bounds\label{sec:Deterministic-a-priori}}

In this section we gather technical lemmas about Schottky groups,
transfer operators and Bergman spaces. Some of them are well known,
while others are specifically tailored for our needs. We suggest the
reader to skim the statements in a first reading, and come back to
the details when the later sections demand it. 

We start with four properties of $w\mapsto\Upsilon_{w}$, whose proofs
can be respectively found in \cite[Lemma 3.4, Lemma 3.5, Lemma 3.2, Lemma 3.1]{magee-naud_explicit_2020}.
\begin{lem}
\label{lem:coarse-homomorphism}Let $\Gamma$ be a Fuchsian Schottky
group. The following holds:
\begin{description}
\item [{(Rough~multiplicativity)}] For all $w_{1},w_{2}\in\Gamma_{\geq1}$
with $w_{1}\rightarrow w_{2}$ we have
\begin{equation}
\len{w_{1}w_{2}}\asymG\len{w_{1}}\len{w_{2}}.\label{eq:upsilon-concatenation}
\end{equation}
\item [{(Mirror~estimate)}] For all $w\in\Gamma_{\geq1}$ we have
\begin{equation}
\len w\asymG\len{w\inv}.\label{eq:upsilon-mirror-eq}
\end{equation}
\item [{(Derivatives)}] For all $w\in\Gamma_{\geq2}$ and any $z\in D_{\End w}$
we have
\begin{equation}
|\backspace w'(z)|\asymG\len w.\label{eq:upsilon-deriv-eq}
\end{equation}
\item [{(Exponential~Bound)}] There are constants $\theta_{\Gamma},\tau_{\Gamma}\in(0,1)$
such that for all $\gamma\in\Gamma_{\geq1}$ we have 
\begin{equation}
\theta_{\Gamma}^{\wlength{\gamma}}\llG\len{\gamma}\llG\tau_{\Gamma}^{\wlength{\gamma}}.\label{eq:upsilon-exp-bound}
\end{equation}
\end{description}
\end{lem}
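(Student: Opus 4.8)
The plan is to derive all four estimates from one bounded‑distortion principle for the conformal iterated function system formed by the inverse branches $\{\gamma_{a}\}_{a\in\Ac}$. First enlarge the Schottky disks slightly to $D_{a}\subset D_{a}^{+}$, keeping the closures $\overline{D_{a}^{+}}$ pairwise disjoint and $\gamma_{a}(\widehat{\CC}\setminus D_{\mirror a}^{+})\Subset D_{a}$; since a reduced word $a_{1}\cdots a_{n}$ is the unique expression of the corresponding element of the free group $\Gamma$, the disks $D_{a_{1}\cdots a_{k}}$ are nested and decreasing, one has $a_{1}\cdots a_{k}(D_{b}^{+})\subset D_{a_{1}\cdots a_{k}}$ whenever $b\neq\mirror{a_{k}}$, and $\backspace w=\gamma_{a_{1}}\circ\cdots\circ\gamma_{a_{n-1}}$ carries $D_{\End w}^{+}$ into $D_{\Start w}$. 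The key input is the \emph{bounded‑distortion principle}: there are $C_{0}=C_{0}(\Gamma)\ge1$ and $\kappa=\kappa(\Gamma)\in(0,1)$ with
\[
C_{0}^{-1}\le\frac{|\backspace w'(z)|}{|\backspace w'(z')|}\le C_{0}\qquad\text{and}\qquad\sup_{z\in D_{\End w}^{+}}|\backspace w'(z)|\le C_{0}\kappa^{\wlength w}
\]
for every reduced $w$ with $\wlength w\ge2$ and all $z,z'\in D_{\End w}^{+}$. I would prove the distortion inequality in the usual IFS way: telescope $\log|\backspace w'(z)|-\log|\backspace w'(z')|$ along the two $\backspace w$‑orbits of $z$ and $z'$, bound each term by $\mathrm{Lip}(\log|\gamma_{a_{j}}'|)$ on the relevant fixed disk (finite, since $\gamma_{a_{j}}$ has no pole there) times the distance of the corresponding orbit points, and sum the resulting geometric series. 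The geometric decay of those distances --- equivalently the displayed contraction bound --- is the only genuinely analytic point: it holds because the nested disks $D_{a_{1}\cdots a_{k}}$ shrink to points of the totally disconnected limit set $\Lambda_{\Gamma}$, and is made quantitative by a Schwarz--Pick or compactness argument (one may equally take this contraction as known from the thermodynamic formalism of the group). I also record the two ``finitely many generators'' facts used throughout --- $|\gamma_{a}'(z)|\asymG1$ for $z\in\overline{D_{b}^{+}}$ with $b\neq\mirror a$, and $\mathrm{diam}(D_{a})\asymG1$ --- and the identity $\len w=\mathrm{diam}(D_{w})$ (the disk $D_{w}$ is centred on $\RR$, so its diameter equals $|I_{w}|$).

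Given this, the first three estimates are routine. Estimate \eqref{eq:upsilon-deriv-eq} is bounded distortion applied to $\backspace w$ on $D_{\End w}$ (so $|\backspace w'|$ is comparable across $D_{\End w}$, and $\backspace w$ sends $D_{\End w}$ to a set of diameter $\asymG|\backspace w'|\cdot\mathrm{diam}(D_{\End w})$), combined with $\mathrm{diam}(D_{\End w})\asymG1$ and $\len w=\mathrm{diam}(D_{w})$. For \eqref{eq:upsilon-concatenation}, if $w_{1}\to w_{2}$ then the concatenation is reduced and $\backspace{w_{1}w_{2}}=w_{1}\circ\backspace{w_{2}}$, so the chain rule at the centre of $D_{\End{w_{2}}}$ writes $\backspace{w_{1}w_{2}}'$ as $w_{1}'(\backspace{w_{2}}\cdot)\cdot\backspace{w_{2}}'(\cdot)$; the second factor is $\asymG\len{w_{2}}$ by \eqref{eq:upsilon-deriv-eq}, and since $\backspace{w_{2}}z\in D_{\Start{w_{2}}}$ with $\Start{w_{2}}\neq\mirror{\End{w_{1}}}$, applying \eqref{eq:upsilon-deriv-eq} and bounded distortion to the reduced word $w_{1}\Start{w_{2}}$ makes the first factor $\asymG\len{w_{1}\Start{w_{2}}}\asymG\len{w_{1}}$ (the last step again by bounded distortion, as $\gamma_{\End{w_{1}}}D_{\Start{w_{2}}}$ is a sub‑disk of $D_{\End{w_{1}}}$ of diameter $\asymG1$); the cases $\wlength{w_{i}}=1$ are the same with $\backspace{w_{i}}=\mathrm{id}$. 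For \eqref{eq:upsilon-exp-bound}, the upper bound is $\len\gamma\asymG|\backspace\gamma'(\cdot)|\le C_{0}\kappa^{\wlength\gamma}$, so $\len\gamma\llG\tau_{\Gamma}^{\wlength\gamma}$ with $\tau_{\Gamma}:=\kappa$; the lower bound follows by peeling off one letter at a time in \eqref{eq:upsilon-concatenation}, giving $\len\gamma\ggG(\min_{a}\len a)^{\wlength\gamma}$ up to an exponential‑in‑$\wlength\gamma$ constant, which a suitably small $\theta_{\Gamma}\in(0,1)$ absorbs.

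The mirror estimate \eqref{eq:upsilon-mirror-eq} is, I expect, the main obstacle, because --- unlike the other three --- it does \emph{not} follow by combining the earlier bounds: iterating them to pass from $w$ to $w^{-1}$ would accumulate a constant exponential in $\wlength w$, so a more global argument is needed. The plan is to prove $\len w\asymG\opnorm{g_{w}}^{-2}$ for a matrix lift $g_{w}\in\SL(2,\RR)$ of $w$; then \eqref{eq:upsilon-mirror-eq} is immediate, since the two singular values of $g_{w}\in\SL(2)$ are reciprocal, whence $\opnorm{g_{w}}=\opnorm{g_{w}^{-1}}=\opnorm{g_{w^{-1}}}$. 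For the comparison I would invoke the standard estimate that $\mathrm{diam}(gB)\asymp\opnorm g^{-2}$, uniformly, for any round ball $B$ whose spherical distance to the repelling direction of $g\in\SL(2,\RR)$ is bounded below --- an elementary consequence of the Cartan/$KAK$ decomposition --- applied with $g=g_{\backspace w}$ and $B=D_{\End w}$ (taking $\wlength w\ge2$; the case $\wlength w=1$ is trivial). The point is that the repelling direction of $\backspace w$ is its repelling fixed point, which equals the attracting fixed point of $(\backspace w)^{-1}$; as $(\backspace w)^{-1}$ is a reduced word with first letter $\mirror{\End{\backspace w}}$, this fixed point lies in $D_{\mirror{\End{\backspace w}}}$, and since $w$ is reduced we have $\End w\neq\mirror{\End{\backspace w}}$, so $D_{\End w}$ is separated from $\overline{D_{\mirror{\End{\backspace w}}}}$ by a gap depending only on the finitely many Schottky data. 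Hence $\len w=\mathrm{diam}(D_{w})=\mathrm{diam}(\backspace w D_{\End w})\asymG\opnorm{g_{\backspace w}}^{-2}$, and $\opnorm{g_{\backspace w}}\asymG\opnorm{g_{w}}$ because $\backspace w$ and $w$ differ by a single generator; combining gives $\len w\asymG\opnorm{g_{w}}^{-2}$, as required. I expect the uniform location of the repelling fixed point, together with citing (or re‑deriving) the $KAK$ diameter estimate, to be the bulk of the work for this part.
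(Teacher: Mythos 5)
The paper does not prove this lemma---it is imported verbatim from \cite[Lemmas 3.1, 3.2, 3.4, 3.5]{magee-naud_explicit_2020}, so there is no in-paper argument to compare against. What you have written is a self-contained derivation from the Schottky data, and it is essentially sound. Bounded distortion for the conformal IFS $\{\gamma_a\}_{a\in\Ac}$, together with the identity $\len w=\mathrm{diam}(D_w)$ and the finitely-many-generators facts, does give (Derivatives); (Rough multiplicativity) then follows by the chain rule exactly as you say, and the (Exponential Bound) by iterating. You are also correct that the (Mirror estimate) is the one point that cannot be reached by iterating the others (that would lose a fixed factor per letter), and your reduction to $\len w\asymG\opnorm{g_w}^{-2}$ with $g_w\in\SL(2,\RR)$---whence $\len{w\inv}\asymG\opnorm{g_{w\inv}}^{-2}=\opnorm{g_w}^{-2}\asymG\len w$, using that an $\SL(2)$ matrix and its inverse have the same operator norm---is a clean global argument. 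It is the shadow estimate $\len\gamma\asymG e^{-d(i,\gamma i)}$ in matrix form, and is close in spirit to how such mirror estimates are usually proved.

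One imprecision to repair in the Mirror step: the point that $B$ must avoid in the estimate $\mathrm{diam}(gB)\asymp\opnorm g^{-2}$ is not the repelling \emph{fixed point} of $g=g_{\backspace w}$ but the repelling \emph{direction} $k_2^{-1}(0)$ of the Cartan decomposition $g=k_1 a k_2$, equivalently the pole $g^{-1}(\infty)$ of $g'$; you identify the two, which is not an identity. The argument survives, because for $\opnorm g$ large the two points are within $O(\opnorm g^{-2})$ of each other and only finitely many $w$ have $\opnorm{g_{\backspace w}}$ below any given threshold, but it is cleaner to run the ping-pong argument directly on $\backspace w^{-1}(\infty)$: since $\infty\notin\Dne$, the same reasoning that placed the repelling fixed point in $D_{\mirror{\End{\backspace w}}}$ shows $\backspace w^{-1}(\infty)\in D_{\mirror{\End{\backspace w}}}$, so $D_{\End w}$ is at a definite distance from the actual singularity of $\backspace w'$, with no approximation needed. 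You should also say explicitly that the diameter estimate is a spherical-metric statement; the passage to Euclidean diameters is harmless here because $D_{\End w}$ and $D_w\subset D_{\Start w}$ both lie in a fixed bounded region of $\CC$, where the two metrics are comparable.
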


In the next lemma we consider $\tau_{\Gamma}$ as in (\ref{eq:upsilon-exp-bound}).
\begin{lem}
\label{lem:exp_sums_of_boundary_lengths}Let $\Gamma$ be a Fuchsian
Schottky group. For any $\eps\in(0,1)$ and any integer $k\geq1$
we have
\[
\sum_{w\in\Gamma_{k}}\len w^{\gexp+\eps}\llG\tau_{\Gamma}^{k\eps}.
\]
\end{lem}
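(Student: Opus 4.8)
The plan is to combine the Gibbs-type decay in \eqref{eq:upsilon-exp-bound} with the rough multiplicativity \eqref{eq:upsilon-concatenation} to set up a submultiplicative recursion on the partition-function-like quantities $Z_k \eqdf \sum_{w\in\Gamma_k}\len w^{\gexp+\eps}$. First I would note that every $w\in\Gamma_k$ with $k\geq 2$ factors uniquely as $w = w_1 a$ with $w_1\in\Gamma_{k-1}$, $a\in\Ac$, and $w_1\to a$ (the word-length additivity is automatic for reduced words in a free group with the symmetric generating set $\Ac$). By \eqref{eq:upsilon-concatenation}, $\len{w}^{\gexp+\eps}\asymG \len{w_1}^{\gexp+\eps}\len{a}^{\gexp+\eps}$, so summing over the (at most $2N_\Gamma$, in fact exactly those compatible with the Schottky pairing) admissible last letters $a$ gives $Z_k \llG C_\Gamma\, Z_{k-1}$ where $C_\Gamma \eqdf \max_{a\in\Ac}\len{a}^{\gexp+\eps}\cdot(\#\Ac)$ or, more carefully, $C_\Gamma = \sum_{a}\len a^{\gexp+\eps}$ times the implied constant. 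This already yields $Z_k\llG (C_\Gamma)^{k}$, but with the wrong base: I need the base to be $\tau_\Gamma^\eps$, i.e. strictly less than $1$ and with the right exponent.

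To get the sharp base I would instead run the estimate in one shot rather than recursively. The cleanest route: write $\len w^{\gexp+\eps} = \len w^{\gexp+\eps_0}\cdot \len w^{\eps-\eps_0}$ for a fixed small $\eps_0\in(0,\eps)$, bound the second factor by $\tau_\Gamma^{\wlength w (\eps-\eps_0)} = \tau_\Gamma^{k(\eps-\eps_0)}$ using the upper bound in \eqref{eq:upsilon-exp-bound} (here $\tau_\Gamma<1$ and $\eps-\eps_0>0$, so this is a genuine decay factor depending on $k$), and then I am left with $Z_k \leq \tau_\Gamma^{k(\eps-\eps_0)}\sum_{w\in\Gamma_k}\len w^{\gexp+\eps_0}$. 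So it suffices to show $\sum_{w\in\Gamma_k}\len w^{\gexp+\eps_0}$ is bounded uniformly in $k$ for some $\eps_0>0$; then the final bound is $\llG \tau_\Gamma^{k(\eps-\eps_0)}$, and since $\eps_0$ can be taken as small as we like (say $\eps_0=\eps/2$) this gives $\llG \tau_\Gamma^{k\eps/2}\llG \tau_\Gamma^{k\eps}$ up to adjusting the implied constant --- actually, to land exactly on the exponent $k\eps$ as stated I would instead just choose $\eps_0$ so that the bookkeeping gives the clean statement, or absorb the discrepancy since $\tau_\Gamma^{k\eps/2}\leq \tau_\Gamma^{k\eps}\cdot\tau_\Gamma^{-k\eps/2}$ is false --- so the honest statement one proves is $\llG\tau_\Gamma^{ck\eps}$ for some $c\in(0,1)$, and one should either re-examine whether the paper's constant in $\tau_\Gamma$ already accounts for this or simply replace $\tau_\Gamma$ by $\tau_\Gamma^{1/2}$, which is still in $(0,1)$ and depends only on $\Gamma$.

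The remaining point --- the heart of the matter --- is the uniform bound $\sup_k \sum_{w\in\Gamma_k}\len w^{\gexp+\eps_0}<\infty$ for some $\eps_0>0$. This is exactly a statement about the critical exponent: the series $\sum_{w\in\Gamma}\len w^{s}$ converges for $s>\gexp$ and its "shadow" interpretation identifies $\gexp$ with the Hausdorff dimension of the limit set. The clean way to prove it here without invoking heavy Patterson--Sullivan machinery is via the transfer operator / thermodynamic formalism already in play: the quantity $\sum_{w\in\Gamma_{k+1}}\len w^{\sigma}$ is comparable (using \eqref{eq:upsilon-deriv-eq} to replace $\len w$ by $|\backspace w'(z)|$ at a reference point) to the trace or to $\|\Lc_{\sigma}^{k}\mathbf 1\|$ of the scalar transfer operator, whose spectral radius is $e^{P(\sigma)}$ where $P$ is the topological pressure; and $P(\sigma)<0$ precisely when $\sigma>\gexp$ since $P(\gexp)=0$. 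Concretely I would invoke the Ruelle--Perron--Frobenius bound: $\sum_{w\in\Gamma_k}\len w^{\sigma}\asymp e^{kP(\sigma)}$, monotone and continuous in $\sigma$, vanishing exponential rate at $\sigma=\gexp$, hence strictly decaying for $\sigma=\gexp+\eps_0$. Since this is the genuinely non-elementary input, in a paper of this style it would most naturally be cited (e.g. from the same reference base as Lemma~\ref{lem:coarse-homomorphism}, or from \cite{borthwick_spectral_2016}) rather than reproved; the rest is the elementary splitting above. I expect the main obstacle, if one insists on self-containedness, to be precisely pinning down this pressure/critical-exponent fact with the specific normalization $\len w$ (lengths of the intervals $I_w$) rather than derivatives, which is handled by \eqref{eq:upsilon-deriv-eq} and \eqref{eq:upsilon-concatenation} at the cost of harmless multiplicative constants.
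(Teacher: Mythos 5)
Your argument is essentially sound, and its skeleton coincides with the paper's: split off a factor $\len w^{\eps}$ and bound it by $\tau_\Gamma^{k\eps}$ via the exponential bound \eqref{eq:upsilon-exp-bound}, then show that the remaining sum is bounded. The place where you and the paper genuinely diverge is the key external ingredient for that remaining sum. You deliberately avoid what you call the ``heavy Patterson--Sullivan machinery,'' but the paper in fact uses exactly that, and it is both simpler and sharper than the pressure-function route you sketch: by \cite[Lemma 2.11]{bourgain_fourier_2017} one has $\len w^{\gexp}\asymG\mu(I_w)$ for a Patterson--Sullivan probability measure $\mu$, so that
\[
\sum_{w\in\Gamma_k}\len w^{\gexp}\asymG\sum_{w\in\Gamma_k}\mu(I_w)=1,
\]
since the intervals $(I_w)_{w\in\Gamma_k}$ are pairwise disjoint and cover the limit set. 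With this one can take $\eps_0=0$ in your splitting and get the stated bound $\tau_\Gamma^{k\eps}$ exactly, with no power loss, no auxiliary $\eps_0$, and no replacement of $\tau_\Gamma$ by $\tau_\Gamma^{1/2}$. By contrast, your route via the Ruelle--Perron--Frobenius theorem and the pressure $P(\sigma)$ works only at $\sigma=\gexp+\eps_0>\gexp$, which is what forces the loss of $\eps_0$ in the final exponent; it also requires a more substantial nonelementary input (spectral radius of the transfer operator, $P(\gexp)=0$) than the Gibbs property of the Patterson--Sullivan measure. So: your proof is correct, but the Patterson--Sullivan route is not ``heavier''---it is the lighter one here, and it is the one that delivers the lemma as stated. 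Your abandoned first attempt via the submultiplicative recursion $Z_k\llG C_\Gamma Z_{k-1}$ was rightly dropped; that approach cannot see $\gexp$ at all.
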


\begin{proof}
Consider $w\in\Gamma_{k}$ and $\eps\in(0,1)$. By (\ref{eq:upsilon-exp-bound})
we have $\len w\llG\tau_{\Gamma}^{k}$. \cite[Lemma 2.11]{bourgain_fourier_2017}
tells us that $\len w^{\gexp}\asymG\mu(I_{w})$, where $\mu$ is a
Patterson-Sullivan probability measure on $\partial\Hyp$ associated
to $\Gamma$. Then
\begin{align*}
\sum_{w\in\Gamma_{k}}\len w^{\gexp+\eps} & \llG\tau_{\Gamma}^{k\eps}\sum_{w\in\Gamma_{k}}\mu(I_{w})=\tau_{\Gamma}^{k\eps},
\end{align*}
which completes the proof. In the last step we used the fact that
$\sum_{w\in\Gamma_{k}}\mu(I_{w})=1$ \footnote{The probability measure $\mu$ is supported on the limit set of $\Gamma$,
which is covered by the pairwise disjoint intervals $(I_{w})_{w\in\Gamma_{k}}$.}. 
\end{proof}
We move on to two auxiliary results about Bergman spaces that we use
later. Both follow easily form the fact that the Bergman space of
an open domain $\Omega$ in $\CC$ has a \emph{reproducing kernel}
$B_{\Omega}$. Namely, $B_{\Omega}$ is a map $\Omega\times\Omega\to\CC$
respectively holomorphic and antiholomorphic in the first and second
coordinates, such that for any $f\in\Hc(\Omega)$ and all $z_{0}\in\Omega$
we have
\[
f(z_{0})=\int_{\Omega}B_{\Omega}(z_{0},z)f(z)\dd z.
\]
For an open disk $D\subseteq\CC$, say of radius $r$ and center $c$,
there is the explicit formula

\begin{equation}
B_{D}(z_{1},z_{2})=\frac{r^{2}}{\pi[r^{2}-(z_{1}-c)(\overline{z_{2}-c})]^{2}}.\label{eq:Bergman_kernel}
\end{equation}
See \cite[p. 378]{borthwick_spectral_2016}.
\begin{lem}
\label{lem:estimate_of_restriction_Bergmann_spaces}Let $D$ and $D'$
be open disks in $\CC$, respectively with centers $c,c'\in\RR$ and
radii $r,r'$. Suppose that $D$ contains the closure of $D'$. For
any holomorphic function $\psi:D\to\CC$ we have
\[
\norm{\psi}_{\Hc(D')}\leq\frac{r'}{\pi^{\frac{1}{2}}(r-r'-|c-c'|)^{2}}\norm{\psi}_{\Hc(D)}.
\]
\end{lem}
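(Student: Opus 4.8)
The plan is to derive the estimate from the reproducing kernel $B_{D}$ of the Bergman space $\Hc(D)$, whose explicit form is (\ref{eq:Bergman_kernel}); this is precisely the tool the remark preceding the lemma points to. If $\norm{\psi}_{\Hc(D)}=\infty$ there is nothing to prove, so I assume $\psi\in\Hc(D)$. The first step is the classical pointwise bound for Bergman functions: for $z_{0}\in D$, the reproducing identity $\psi(z_{0})=\int_{D}B_{D}(z_{0},z)\dd z\,\psi(z)$ together with Cauchy--Schwarz gives
\[
|\psi(z_{0})|^{2}\le\Bigl(\int_{D}|B_{D}(z_{0},z)|^{2}\dd z\Bigr)\norm{\psi}_{\Hc(D)}^{2}=B_{D}(z_{0},z_{0})\,\norm{\psi}_{\Hc(D)}^{2},
\]
the final equality being the reproducing property applied to the function $z\mapsto B_{D}(z_{0},z)$, which lies in $\Hc(D)$.

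The second step is to bound $B_{D}(z_{0},z_{0})$ uniformly over $z_{0}\in D'$. By (\ref{eq:Bergman_kernel}) one has $B_{D}(z_{0},z_{0})=\dfrac{r^{2}}{\pi\,(r^{2}-|z_{0}-c|^{2})^{2}}$, and for $z_{0}\in D'$ the triangle inequality gives $|z_{0}-c|\le|z_{0}-c'|+|c'-c|<r'+|c-c'|$. Since the hypothesis $\overline{D'}\subseteq D$ forces $r-r'-|c-c'|>0$, the quantity $r^{2}-|z_{0}-c|^{2}=(r-|z_{0}-c|)(r+|z_{0}-c|)$ is bounded below by a positive quantity depending only on $r,r',|c-c'|$; substituting into the formula gives $\sup_{z_{0}\in D'}|\psi(z_{0})|\le C_{0}\norm{\psi}_{\Hc(D)}$ with $C_{0}$ an explicit constant of the form $c_{1}(r-r'-|c-c'|)^{-k}$.

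The third step is to integrate over $D'$, which has area $\pi(r')^{2}$:
\[
\norm{\psi}_{\Hc(D')}^{2}=\int_{D'}|\psi(z_{0})|^{2}\dd z_{0}\le\pi(r')^{2}\,C_{0}^{2}\,\norm{\psi}_{\Hc(D)}^{2},
\]
and taking square roots yields a bound of the announced shape, the factor $\pi^{1/2}r'$ arising from $(\mathrm{Area}\,D')^{1/2}$. I do not expect any conceptual obstacle here: once the reproducing kernel is in hand the whole argument is essentially forced. The only genuine work is the arithmetic of the constant --- tracking $C_{0}$ through (\ref{eq:Bergman_kernel}) and the inclusion $\overline{D'}\subseteq D$ so as to recover the stated $\pi^{1/2}(r-r'-|c-c'|)^{2}$. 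In carrying this out I would record the cleanest form of the kernel estimate (which already gives a bound of the type $r'\,(r-r'-|c-c'|)^{-1}$) and then verify that the version stated in the lemma follows from a correspondingly coarser bound; as a sanity check on the final constant I would test $\psi\equiv1$, for which $\norm{\psi}_{\Hc(D')}/\norm{\psi}_{\Hc(D)}=r'/r$.
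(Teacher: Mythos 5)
Your method --- reproducing kernel, the pointwise Cauchy--Schwarz bound $|\psi(z_0)|^2\le B_D(z_0,z_0)\,\norm{\psi}_{\Hc(D)}^2$, then integrate over $D'$ --- is correct and is a cleaner variant of the paper's argument. The paper instead bounds $|B_D(z',z)|$ uniformly over $z'\in D'$, $z\in D$, pushes that bound inside the double integral, and invokes Jensen's inequality to pass from $\bigl(\int_D|\psi|\dd z\bigr)^2$ to $\norm{\psi}_{\Hc(D)}^2$. Working on the diagonal as you do avoids that last step and, as you notice, gives the sharper $(r-r'-|c-c'|)^{-1}$ in place of $(r-r'-|c-c'|)^{-2}$.

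The step that would not go through is the last one in your plan: your sharper bound does not imply the stated one, and it cannot, because the stated constant is not actually a valid upper bound. Your own sanity check settles this if you push it: for $\psi\equiv1$ and concentric disks the true ratio is $r'/r$, whereas the claimed bound is $r'/(\pi^{1/2}(r-r')^2)$; taking $r=4$, $r'=1$, $c=c'$ gives $1/4$ against roughly $0.06$. The paper's own proof trips at the Jensen step --- Jensen with respect to Lebesgue measure on $D$ produces an extra factor $\pi r^2$ that is dropped --- and carrying that factor through would yield $\norm{\psi}_{\Hc(D')}\le rr'(r-r'-|c-c'|)^{-2}\norm{\psi}_{\Hc(D)}$, a correct bound of the same quality as yours. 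None of this matters downstream: Lemma~\ref{lem:estimate_of_restriction_Bergmann_spaces} is used only via Corollary~\ref{cor:norm_restriction_Bergman}, where $D=D_a$ and $r$ are fixed by the Schottky data and the conclusion is stated only up to $\llG$, so the extra factor of $r$ is absorbed. Present your argument with your own (correct, sharper) constant rather than trying to reconcile it with the stated one.
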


\begin{proof}
Without loss of generality we assume that $c=0$. We compute the norm
of $\psi$ in $\Hc(D')$ with the aid of $B_{D}$: For any $z_{0}\in D$
we have
\[
\psi(z_{0})=\int_{D}B_{D}(z_{0},z)\psi(z)\dd z.
\]
Any $z'\in D'$ verifies $|\overline{z'}|=|z'|\leq r'+|c'|$, so from
(\ref{eq:Bergman_kernel}) we readily see that
\[
|B_{D}(z',z)|\leq\frac{1}{\pi(r-r'-|c'|)^{2}}
\]
for any $z\in D$. Hence,

\begin{align*}
\int_{D'}|\psi(z')|^{2}\dd z' & =\int_{D'}\left|\int_{D}B_{D}(z',z)\psi(z)\dd z\right|^{2}\dd z'\\
 & \leq\frac{1}{\pi^{2}(r-r'-|c'|)^{4}}\int_{D'}\left(\int_{D}|\psi(z)|\dd z\right)^{2}\dd z'\\
 & \leq\frac{1}{\pi^{2}(r-r'-|c'|)^{4}}\int_{D'}\norm{\psi}_{\Hc(D)}^{2}\dd z'\\
 & =\frac{(r')^{2}}{\pi(r-r'-|c'|)^{4}}\norm{\psi}_{\Hc(D)}^{2},
\end{align*}
and the result follows. From line 2 to line 3 in the preceding computation
we used Jensen's inequality.
\end{proof}
For ease of reference we record the following consequence of Lemma
\ref{lem:estimate_of_restriction_Bergmann_spaces}.
\begin{cor}
\label{cor:norm_restriction_Bergman}Let $\Gamma$ be a Fuchsian Schottky
group. For any $w\in\Gamma_{\geq2}$ and any $\psi\in\Hc(D_{S(w)})$
we have
\[
\norm{\psi}_{\Hc(D_{w})}\llG\len w\norm{\psi}_{\Hc(D_{S(w)}).}
\]
\end{cor}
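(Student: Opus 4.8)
The plan is to apply Lemma~\ref{lem:estimate_of_restriction_Bergmann_spaces} with $D = D_{S(w)}$ and $D' = D_w$, and then to convert the resulting geometric quantity—the radius of $D_w$ divided by (essentially) the squared gap between the two disks—into a bound of order $\len{w}$ using the Schottky geometry. First I would recall that for $w \in \Gamma_{\geq 2}$ we set $D_w = \backspace{w}D_{\End w}$; since $\backspace{w} \in \Gamma$ is a M\"obius transformation and $D_{\End w}$ has closure contained in the complement of the Schottky disk associated to $\Start{\backspace{w}}^{-1}$... more usefully, condition (2) in the definition of the Schottky data together with an easy induction shows that the closure of $D_w$ is contained in $D_{S(w)}$ (this is the standard nesting of Schottky disks: writing $w = a_1 \cdots a_n$, one has $\cl(D_w) \subseteq D_{a_1} = D_{S(w)}$ because $\backspace{w}$ maps $\widehat{\CC} \setminus D_{\mirror{a_1}}$ into $\cl(D_{a_1})$ and $D_{\End w}$ avoids $D_{\mirror{a_1}}$). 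Hence the hypothesis of Lemma~\ref{lem:estimate_of_restriction_Bergmann_spaces} is satisfied.

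With the nesting in hand, Lemma~\ref{lem:estimate_of_restriction_Bergmann_spaces} gives
\[
\norm{\psi}_{\Hc(D_w)} \leq \frac{r_w}{\pi^{1/2}(r_{S(w)} - r_w - |c_{S(w)} - c_w|)^2}\norm{\psi}_{\Hc(D_{S(w)})},
\]
where $r_w, c_w$ and $r_{S(w)}, c_{S(w)}$ are the radii and (real) centers of $D_w$ and $D_{S(w)}$. The numerator $r_w$ is comparable to $\len w$: indeed $2 r_w \geq \len w = |I_w|$ trivially, and since $I_w = D_w \cap \RR$ is a diameter of the disk $D_w$ (the center is real), in fact $2 r_w = \len w$ exactly. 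So the numerator is literally $\tfrac12 \len w$. For the denominator it suffices to produce a positive lower bound $c(\Gamma) > 0$, uniform over all $w \in \Gamma_{\geq 2}$, for $r_{S(w)} - r_w - |c_{S(w)} - c_w|$; equivalently, a uniform lower bound on the distance from $\cl(D_w)$ to $\partial D_{S(w)}$. Since $S(w) = a_1 \in \Ac$ ranges over a finite set, $r_{S(w)}$ takes finitely many values, so it is enough to bound the distance from $D_w$ to the boundary of the ambient disk $D_{a_1}$ from below. This in turn reduces, via the nesting $\cl(D_w) \subseteq D_{a_1 a_2} = a_1 D_{a_2} \subseteq \cl(D_{a_1})$ and the pairwise-disjointness condition (1), to the fact that each $a \in \Ac$ maps the complement of $D_{\mirror a}$ into the interior of $D_a$, so $\cl(D_{a a_2})$ is a compact subset of the open disk $D_a$, and there are only finitely many pairs $(a, a_2)$ with $a_2 \neq \mirror a$. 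Taking $c(\Gamma)$ to be the minimum over these finitely many pairs of the distance from $\cl(D_{a a_2})$ to $\partial D_a$ yields a uniform positive lower bound, because any $\cl(D_w)$ with $\Start w = a$, $w_2 \cdots = a_2$ is contained in $\cl(D_{a a_2})$.

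Combining the two estimates gives $\norm{\psi}_{\Hc(D_w)} \leq \frac{\len w}{2\pi^{1/2} c(\Gamma)^2}\norm{\psi}_{\Hc(D_{S(w)})}$, which is exactly the asserted bound $\norm{\psi}_{\Hc(D_w)} \llG \len w \norm{\psi}_{\Hc(D_{S(w)})}$ with implied constant depending only on $\Gamma$. I expect the only real content—the rest being bookkeeping with Lemma~\ref{lem:estimate_of_restriction_Bergmann_spaces}—to be the uniform positive lower bound on the denominator, i.e. making precise the claim that $\cl(D_w)$ stays a definite distance away from $\partial D_{S(w)}$; the key point is that this distance is controlled by the single letter $\Start w$ and (at most) the second letter, both ranging over the finite alphabet $\Ac$, so finiteness of $\Ac$ does all the work. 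An alternative, perhaps cleaner, route for this step is to note that $\len{w} \asymG |\backspace{w}'(z)|$ for $z \in D_{\End w}$ by \eqref{eq:upsilon-deriv-eq} and that one can absorb the whole prefactor in Lemma~\ref{lem:estimate_of_restriction_Bergmann_spaces} by combining it with the rough multiplicativity \eqref{eq:upsilon-concatenation}, but the direct argument above is self-contained.
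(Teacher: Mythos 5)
Your proof is correct and takes essentially the same route as the paper: apply Lemma~\ref{lem:estimate_of_restriction_Bergmann_spaces} with $D=D_{S(w)}$ and $D'=D_w$, observe $2r_w=\len{w}$, and bound the denominator below by a constant depending only on $\Gamma$. The paper simply asserts $r_{S(w)}-r_w-|c_{S(w)}-c_w|\ggG 1$ without elaboration, whereas you spell out the uniformity by reducing to the finitely many two-letter prefixes $a_1a_2$ and noting that $\cl(D_{a_1a_2})$ is compactly contained in $D_{a_1}$; this is a welcome filling-in of detail, not a different argument.
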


\begin{proof}
For any $\gamma\in\Gamma-\{I\}$, we denote respectively the center
and the radius of $D_{\gamma}$ by $c_{\gamma}$ and $r_{\gamma}$.
Let $a=S(w)$. Since $D_{w}$ is contained in $D_{a}$, by Lemma \ref{lem:estimate_of_restriction_Bergmann_spaces}
we have

\begin{equation}
\norm{\psi}_{\Hc(D_{w})}\leq\frac{r_{w}}{\pi^{\frac{1}{2}}(r_{a}-r_{w}-|c_{a}-c_{w}|)^{2}}\norm{\psi}_{\Hc(D_{a})}.\label{eq:Z1}
\end{equation}
Note that $r_{w}+|c_{a}-c_{w}|$ is the radius of the smallest circle
with center $c_{a}$ containing $D_{w}$, hence $r_{a}-r_{w}-|c_{a}-c_{w}|\ggG1$.
Also, $2r_{w}=\len w$. The claim follows from these observations
and (\ref{eq:Z1}).
\end{proof}
\begin{lem}
\label{lem:norm_evaluation_map}Let $\Gamma$ be a Fuchsian Schottky
group and let $V$ be a Hilbert space of finite dimension. For any
$f\in\Hc(\Dne,V)$, any $w\in\Gamma_{\geq2}$ and all $z_{w}\in D_{w}$
we have
\[
\norm{f(z_{w})}_{V}\llG\norm f_{\Hc(\Dne,V)}.
\]
\end{lem}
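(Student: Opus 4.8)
The plan is to reduce everything to a pointwise evaluation estimate on a single disk $\Hc(D_a)\to\CC$ via the reproducing kernel, and then control the restriction norm $\norm{f}_{\Hc(D_a,V)}$ by the global norm $\norm{f}_{\Hc(\Dne,V)}$. First I would fix $w\in\Gamma_{\geq2}$, set $a=\Start w$, and recall that $D_w\subseteq D_a$ with $D_w$ at positive distance (depending only on $\Gamma$) from $\partial D_a$; this is exactly the geometric fact used in the proof of Corollary \ref{cor:norm_restriction_Bergman}, namely $r_a - r_w - |c_a - c_w| \ggG 1$. Consequently, for any $z_w\in D_w$ the Bergman kernel of $D_a$ satisfies $|B_{D_a}(z_w,z)|\llG 1$ for all $z\in D_a$, by the explicit formula \eqref{eq:Bergman_kernel} together with this uniform separation.

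Next, for a scalar-valued $\psi\in\Hc(D_a)$ the reproducing property gives
\[
\psi(z_w)=\int_{D_a}B_{D_a}(z_w,z)\,\psi(z)\dd z,
\]
so $|\psi(z_w)|\leq \sup_{z}|B_{D_a}(z_w,z)|\cdot\int_{D_a}|\psi(z)|\dd z \llG \int_{D_a}|\psi(z)|\dd z$, and by Cauchy--Schwarz (the area of $D_a$ being bounded) this is $\llG \norm{\psi}_{\Hc(D_a)}$. To transfer this to the vector-valued setting, I would apply the scalar bound to $\psi = f_v \eqdf \scal{f(\cdot)}{v}$ for a unit vector $v\in V$: then $f_v\in\Hc(D_a)$ (indeed $f_v = \scal{f}{v}$ restricted, and restriction only decreases the norm), $\norm{f_v}_{\Hc(D_a)}\leq \norm{f_v}_{\Hc(\Dne)}\leq\norm{f}_{\Hc(\Dne,V)}$ by the isomorphism \eqref{eq:isomorphism_Bergmann_spaces} and Parseval, and $|f_v(z_w)| = |\scal{f(z_w)}{v}|$. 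Taking the supremum over unit $v$ recovers $\norm{f(z_w)}_V = \sup_{\norm v =1}|\scal{f(z_w)}{v}|\llG \norm{f}_{\Hc(\Dne,V)}$, which is the claim. Note the implied constant depends only on $\Gamma$ and not on $w$, $V$, or $z_w$, since the only $w$-dependence entered through the uniform separation of $D_w$ from $\partial D_a$.

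The only mild subtlety — and the step I would be most careful about — is making sure the evaluation bound is genuinely uniform in $w$: the disks $D_a$ for $a\in\Ac$ are finitely many and fixed, but $z_w$ ranges over $D_w$, which can be a tiny disk deep inside $D_a$; what saves uniformity is precisely that we always estimate against $\partial D_a$, and the worst case is $z_w$ near $\partial D_a$, which cannot happen because $\cl D_w$ stays a fixed distance away. Once this is pinned down, the rest is the routine reproducing-kernel / Cauchy--Schwarz chain, essentially identical in spirit to Lemma \ref{lem:estimate_of_restriction_Bergmann_spaces}.
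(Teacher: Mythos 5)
Your proposal is correct and follows essentially the same route as the paper: reproduce $f$ at $z_w$ via the Bergman kernel of $D_a$, use the uniform separation of $\cl D_w$ (indeed of $\bigcup_{\gamma\in\Gamma_2,\,\Start{\gamma}=a}D_\gamma$) from $\partial D_a$ to bound the kernel by $O_\Gamma(1)$, apply Cauchy--Schwarz, and pass to vector-valued $f$ by scalarizing against unit vectors (the paper uses an orthonormal basis and sums squares, which is equivalent). No gaps.
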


\begin{proof}
We start with the case $V=\CC$. Consider $\varphi\in\Hc(\Dne)$ and
let $a=\Start w$. Let $c_{a},r_{a}$ and $B_{a}$ be respectively
the center, radius and the Bergman kernel of $D_{a}$. Since $z_{w}$
lies in $D_{a}$, we have $\varphi(z_{w})=\scal{\Ptt_{a}\varphi}{\overline{B_{a}(z_{w},\cdot)}}_{\Hc(D_{a})}$. 

Let
\[
\Bc_{a}\eqdf\bigcup_{\substack{\gamma\in\Gamma_{2}\\
\Start{\gamma}=a
}
}D_{\gamma}.
\]
Note that there is $\eps_{\Gamma}>0$ such that for any $z\in\Bc_{a}$
we have $|z-c_{a}|\leq r_{a}-\eps_{\Gamma}$\footnote{Since the closure of $\Bc_{a}$ is contained in $D_{a}$.}.
It follows then from (\ref{eq:Bergman_kernel}) that $|B_{a}(z_{w},z_{a})|\llG1$
for any $z_{a}\in D_{a}$. The Cauchy-Schwarz inequality on $\Hc(D_{a})$
gives

\begin{align}
|\varphi(z_{w})| & \leq\norm{\overline{B_{a}(z_{w},\cdot)}}_{\Hc(D_{a})}\norm{\Ptt_{a}\varphi}_{\Hc(D_{a})}\nonumber \\
 & \llG\norm{\varphi}_{\Hc(\Dne)},\label{eq:ev0}
\end{align}
which completes the proof in this case.

For a general $V$ and $f\in\Hc(\Dne,V)$, we consider an orthonormal
basis $v_{1},\ldots,v_{d}$ of $V$ and we express $f$ as $\sum_{j}\varphi_{j}\otimes v_{j}$
for some $\varphi_{j}\in\Hc(\Dne)$. To complete the proof we apply
(\ref{eq:ev0}) as follows:
\begin{align*}
\norm{f(z_{w})}_{V} & =\left(\sum_{j=1}^{d}|\varphi_{j}(z_{w})|^{2}\right)^{\frac{1}{2}}\\
 & \llG\left(\sum_{j=1}^{d}\norm{\varphi_{j}}_{\Hc(\Dne)}^{2}\right)^{\frac{1}{2}}=\norm f_{\Hc(\Dne,V)}.
\end{align*}
\end{proof}
We close this section with an estimate of the norm variation of a
transfer operator with respect to the parameter $s$. Recall that
$N_{\Gamma}$ is the rank of a Schottky group $\Gamma$. 
\begin{lem}
\label{lem:transf_op_dependence_on_s}Let $\Gamma$ be a Fuchsian
Schottky group and let $\Kc$ be a compact subset of $\CC$. There
are positive constants $B_{\text{\ensuremath{\Gamma}}},J_{\Gamma}$
and $C_{\Kc}$ such that for any $\ell\ggG1$, any finite dimensional
unitary representation $(\rho,V)$ of $\Gamma$ and any $s_{1},s_{2}\in\Kc$
we have
\[
\opnorm{\Lc_{s_{1},\rho}^{\ell}-\Lc_{s_{2},\rho}^{\ell}}\leq J_{\Gamma}|s_{1}-s_{2}|(\ell+1)\left[(2N_{\Gamma}-1)C_{\Kc}^{B_{\Gamma}}\right]^{\ell+1}.
\]
\end{lem}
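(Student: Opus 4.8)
The plan is to estimate the difference $\Lc_{s_1,\rho}^\ell-\Lc_{s_2,\rho}^\ell$ by working with the explicit formula \eqref{eq:powers_of_transf_op}, where the only $s$-dependence sits in the scalars $\backspace w'(z_a)^s=\exp(s\,\tau(\backspace w'(z_a)))$. For a fixed word $w\in\Gamma_{\ell+1}$ with $\End w=a$, the two terms differ only in this prefactor, so $\big|\backspace w'(z_a)^{s_1}-\backspace w'(z_a)^{s_2}\big|$ controls the $w$-summand. Writing $g(z)=\tau(\backspace w'(z))$, the elementary bound $|e^{s_1 g}-e^{s_2 g}|\le |s_1-s_2|\,|g|\,e^{\max_i \re(s_i)\,|g|}$ reduces everything to estimating $|g(z_a)|=|\tau(\backspace w'(z_a))|$. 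The real part of $\tau$ is $\log|\backspace w'(z_a)|$, which by the Derivatives estimate \eqref{eq:upsilon-deriv-eq} is $\asymp_\Gamma \log\len w$, hence by the Exponential Bound \eqref{eq:upsilon-exp-bound} is $O_\Gamma(\ell+1)$ in absolute value; the imaginary part lies in $(-\pi,\pi)$. So $|g(z_a)|\llG (\ell+1)$ uniformly in $w$ and $z_a$, and likewise $e^{\max_i\re(s_i)|g(z_a)|}\le C_\Kc^{B_\Gamma(\ell+1)}$ for suitable constants depending only on $\Kc$ and $\Gamma$ (absorbing the $\asymp_\Gamma$ constants into $B_\Gamma$ and taking $C_\Kc$ a bound for $\len w^{-\max|\re s|}$-type quantities). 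This gives a pointwise bound on $\big\|[\Lc_{s_1,\rho}^\ell f - \Lc_{s_2,\rho}^\ell f](z_a)\big\|_V$ that is $|s_1-s_2|(\ell+1)C_\Kc^{B_\Gamma(\ell+1)}$ times $\sum_{w:\End w=a}\len w^{\re s}\norm{f(\backspace w z_a)}_V$ (using $|\backspace w'(z_a)^{s_i}|=|\backspace w'(z_a)|^{\re s_i}\asymp_\Gamma \len w^{\re s_i}$ again, which is where the factor $C_\Kc^{B_\Gamma}$ comes from — I can arrange the exponent to be $\ell+1$ rather than $\ell$ to match the statement).

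Next I would pass from this pointwise estimate to an operator-norm estimate. By Lemma \ref{lem:norm_evaluation_map}, $\norm{f(\backspace w z_a)}_V\llG \norm f_{\Hc(\Dne,V)}$ for each $w$ (note $\backspace w z_a\in D_{\backspace w}$), and the number of $w\in\Gamma_{\ell+1}$ is $2N_\Gamma(2N_\Gamma-1)^{\ell}\le (2N_\Gamma-1)^{\ell+1}\cdot\text{const}$; together with the uniform bound $\len w^{\re s_i}\le 1$ (or $\le C_\Kc^{B_\Gamma}$ if $\re s_i<0$ on $\Kc$, again absorbed into the constants) this controls the sum over $w$ by $(2N_\Gamma-1)^{\ell+1}\norm f_{\Hc(\Dne,V)}$ up to a $\Gamma$-constant. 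Then $\norm{[\Lc_{s_1,\rho}^\ell-\Lc_{s_2,\rho}^\ell]f}_{\Hc(\Dne,V)}^2=\sum_{a\in\Ac}\int_{D_a}\norm{\cdot}_V^2$, and since each integrand is bounded by the square of the pointwise estimate while the disks $D_a$ have bounded total area, one recovers
\[
\opnorm{\Lc_{s_1,\rho}^\ell-\Lc_{s_2,\rho}^\ell}\llG |s_1-s_2|(\ell+1)\big[(2N_\Gamma-1)C_\Kc^{B_\Gamma}\big]^{\ell+1},
\]
which is the claimed inequality after renaming the implied $\Gamma$-constant $J_\Gamma$. The unitarity of $\rho$ is used only to discard $\rho(\backspace w\inv)$ harmlessly (it is an isometry of $V$), so the bound is uniform over all finite dimensional unitary $\rho$, as required.

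The only mildly delicate point — and the one I would be most careful about — is the bookkeeping of the constants so that the final exponent is exactly $\ell+1$ and the base is exactly $(2N_\Gamma-1)C_\Kc^{B_\Gamma}$: one has a product of $\ell$ (or $\ell+1$) factors of $\asymp_\Gamma$-type from \eqref{eq:upsilon-deriv-eq} used inside $|g(z_a)|$ and in bounding $|\backspace w'(z_a)|^{\re s_i}$, and these geometric-in-$\ell$ losses must all be swept into $C_\Kc^{B_\Gamma}$ (with $C_\Kc$ depending on $\max_{s\in\Kc}|s|$ and $\theta_\Gamma,\tau_\Gamma$ from \eqref{eq:upsilon-exp-bound}, and $B_\Gamma$ absorbing the $\asymp_\Gamma$ implied constants). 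The hypothesis $\ell\ggG 1$ is exactly what lets one ignore finitely many small-$\ell$ exceptions where the crude bounds $\len w\le 1$ or the disk-containment needed for Lemma \ref{lem:norm_evaluation_map} might fail (the latter needs $\wlength w\ge 2$, automatic once $\ell\ge 1$). No substantive obstacle remains; the estimate is deliberately crude because it is only used to upgrade a pointwise-in-$s$ statement to a uniform-on-compacta statement via a net of radius $\sim e^{-c\ell}$.
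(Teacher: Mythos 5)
Your proposal is correct and follows essentially the same route as the paper's proof: the difference is controlled word-by-word via the elementary bound on $|e^{z_1}-e^{z_2}|$, with $|\tau(\backspace w'(z_b))|\le B_\Gamma(\ell+1)$ coming from the derivative and exponential-length estimates, followed by the evaluation bound of Lemma \ref{lem:norm_evaluation_map}, the count $\#\Gamma_{\ell+1}\asymp(2N_\Gamma-1)^{\ell+1}$, and a final integration over $\Dne$. The only quibble is that your intermediate exponential factor should be $e^{\max_i\re(s_i g)}\le e^{\max_i|s_i|\,|g|}$ rather than $e^{\max_i\re(s_i)\,|g|}$ (which is not an upper bound in general), but since your $C_\Kc$ is taken to depend on $\max_{s\in\Kc}|s|$ this does not affect the conclusion.
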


\begin{proof}
Consider $f\in\Hc(\Dne,V)$, $b\in\Ac$ and $z_{b}\in D_{b}$. From
(\ref{eq:powers_of_transf_op}) we see that 
\begin{equation}
(\Lc_{s_{1},\rho}^{\ell}-\Lc_{s_{2},\rho}^{\ell})f(z_{b})=\sum_{\substack{w\in\Gamma_{\ell+1}\\
\End w=b
}
}\left(\backspace w'(z_{b})^{s_{1}}-\backspace w'(z_{b})^{s_{2}}\right)\rho(\backspace w\inv)f(\backspace wz_{b}).\label{eq:W0}
\end{equation}
In all the proof, $w$ is an element of $\Gamma_{\ell+1}$ with $\End w=b$.
By (\ref{eq:upsilon-deriv-eq}) and (\ref{eq:upsilon-exp-bound}),
both $|\backspace w'(z_{b})|$ and $\len w$ are $<1$ whenever $\ell\ggG1$.
We assume this for the rest of the proof.

To estimate the norm of $(\Lc_{s_{1},\rho}^{\ell}-\Lc_{s_{2},\rho}^{\ell})f(z_{b})$
we will use the next elementary bound, valid for all $z_{1},z_{2}\in\CC$:
\[
\left|e^{z_{1}}-e^{z_{2}}\right|\leq\vert z_{1}-z_{2}\vert e^{\max\left\{ \re z_{1},\re z_{2}\right\} }.
\]
Thus
\begin{equation}
|\backspace w'(z_{b})^{s_{1}}-\backspace w'(z_{b})^{s_{2}}|\leq|s_{1}-s_{2}||\tau(\backspace w'z_{b})|e^{\max\{\re[s_{1}\tau(\backspace w'z_{b})],\re[s_{2}\tau(\backspace w'z_{b})]\}},\label{eq:ABC0}
\end{equation}
where $\tau$ is the branch of the logarithm as in (\ref{eq:branch_log}).
Since $|\backspace w'z_{b}|<1$, then $|\tau(\backspace w'z_{b})|\leq\pi+\log(|\backspace w'z_{b}|\inv)$,which
by (\ref{eq:upsilon-deriv-eq}) and (\ref{eq:upsilon-exp-bound})
implies that
\begin{equation}
|\tau(\backspace w'z_{b})|\leq B_{\Gamma}(\ell+1)\label{eq:ABC1}
\end{equation}
for some $B_{\Gamma}>0$. Let $C_{\Kc}\eqdf\max_{z\in\Kc}e^{|z|}.$
By (\ref{eq:ABC1}), we have the following for any $s\in\Kc$:
\begin{align}
\exp[\re(s\tau(\backspace w'z_{b}))] & \leq\exp|s\tau(\backspace w'z_{b})|\nonumber \\
 & \leq C_{\Kc}^{B_{\Gamma}(\ell+1)}.\label{eq:ABC2}
\end{align}
Plugging (\ref{eq:ABC1}) and (\ref{eq:ABC2}) into (\ref{eq:ABC0})
yields
\begin{equation}
|\backspace w'(z_{b})^{s_{1}}-\backspace w'(z_{b})^{s_{2}}|\llG|s_{1}-s_{2}|(\ell+1)C_{\Kc}^{B_{\Gamma}(\ell+1)}\label{eq:ABC3}
\end{equation}

Applying the triangle inequality in (\ref{eq:W0}) followed by (\ref{eq:ABC3})
and Lemma \ref{lem:norm_evaluation_map}, we get

\begin{align}
\norm{(\Lc_{s_{1},\rho}^{\ell}-\Lc_{s_{2},\rho}^{\ell})f(z_{b})}_{V} & \llG|s_{1}-s_{2}|(\ell+1)C_{\Kc}^{B_{\Gamma}(\ell+1)}\norm f_{\Hc(\Dne,V)}\frac{\#\Gamma_{\ell+1}}{2N_{\Gamma}}\nonumber \\
 & \llG|s_{1}-s_{2}|(\ell+1)\left[C_{\Kc}^{B_{\Gamma}}(2N_{\Gamma}-1)\right]^{\ell+1}\norm f_{\Hc(\Dne,V)},\label{eq:}
\end{align}
and since
\[
\norm{(\Lc_{s_{1},\rho}^{\ell}-\Lc_{s_{2},\rho}^{\ell})f}_{\Hc(\Dne,V)}=\left(\int_{\Dne}\norm{(\Lc_{s_{1},\rho}^{\ell}-\Lc_{s_{2},\rho}^{\ell})f(z)}_{V}^{2}dz\right)^{\frac{1}{2}},
\]
we are done.
\end{proof}

\section{Strong convergence and the proof of Theorem \ref{thm:sc_deterministic}
\label{sec:Strong_convergence}}

This section has two parts: in Section \ref{subsec:Strong-convergence-and_mat_amp}
we introduce the notion of strong convergence for random unitary representations
of a free group $\Gamma$, which is in fact a condition on the associated
representations of the group algebra $\CC[\Gamma]$. We also explain
how to upgrade the strong convergence via the matrix amplification
trick in order to handle the random transfer operators of Theorem
\ref{thm:sc_deterministic}. The proof of this last result is completed
in Section \ref{subsec:proof_sc_deter} by using a norm bound that
we prove in Section \ref{sec:Norm-estimation-of}. 

\subsection{Strong convergence and matrix amplification\label{subsec:Strong-convergence-and_mat_amp}}

Suppose in this section that $\Gamma$ is a free group of rank $N$
and $(\rho_{j},W_{j})_{j\geq1}$ is a sequence of (possibly random)
unitary representations of $\Gamma$. 

Let us recall two notions of convergence for random variables: We
say that a sequence of complex random variables $(Z_{j})_{j\geq1}$
\emph{converges asymptotically almost surely} (a.a.s.) to a constant
$z\in\CC$ if for all $\epsilon>0$,
\[
\lim_{j\to\infty}\PP(|Z_{j}-z|>\epsilon)=0.
\]
This is the same as convergence in probability. We say additionally
that the convergence is almost sure (a.s.) if for all $\epsilon>0$
we have
\[
\sum_{j=1}^{\infty}\PP(|Z_{j}-z|>\epsilon)<\infty.
\]

\begin{defn}[Strong convergence]
A sequence of unitary representations $(\rho_{j},W_{j})_{j\geq1}$
of $\Gamma$ \emph{strongly converges }to $(\rho_{\infty},W_{\infty})$
if for all $f\in\CC[\Gamma]$---the group algebra of $\Gamma$---we
have
\begin{equation}
\lim_{j\to\infty}\norm{\rho_{j}(f)}_{W_{j}}=\norm{\rho_{\infty}(f)}_{W_{\infty}}.\label{eq:def_strong_convergence}
\end{equation}
When the unitary representations $(\rho_{j})_{j\geq1}$ are random,
we say that they a.a.s. (a.s.) strongly converge to $\rho_{\infty}$
if the convergence in (\ref{eq:def_strong_convergence}) is a.a.s.
(a.s.).
\end{defn}

In the rest of this section, we assume that the random unitary representations
$(\rho_{j},W_{j})_{j\geq1}$ of $\Gamma$ are finite dimensional and
that they a.a.s. or a.s. strongly converge to the left regular representation
$(\rho_{\Gamma},\ell^{2}(\Gamma))$ of $\Gamma$. 

By matrix amplification (see for example \cite[Section 9]{haagerup-thorb_ANewApplication_2005}),
the strong convergence of the sequence $(\rho_{j})_{j\geq1}$ implies
the following more general statement. 
\begin{prop}
\label{prop:Bordenave-Collins+linearization} For any $r\geq1$ and
any finitely supported map $\gamma\mapsto F_{\gamma}$ with values
in $M_{r}(\CC)$ we have
\[
\lim_{j\to\infty}\norm{\sum_{\gamma\in\Gamma}F_{\gamma}\otimes\rho_{j}(\gamma)}_{\CC^{r}\otimes W_{j}}=\norm{\sum_{\gamma\in\Gamma}F_{\gamma}\otimes\rho_{\Gamma}(\gamma)}_{\CC^{r}\otimes\ell^{2}(\Gamma)}
\]
 (a.s or a.a.s., respectively, if the representations are random).
\end{prop}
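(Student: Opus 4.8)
The statement is the standard "matrix amplification" upgrade of strong convergence, so the plan is to reduce the matrix-valued noncommutative polynomial to an ordinary element of the group algebra by absorbing the matrix coefficients into the dimension. Concretely, write $P_j := \sum_{\gamma\in\Gamma} F_\gamma\otimes\rho_j(\gamma)$, an operator on $\CC^r\otimes W_j$, and $P_\infty := \sum_{\gamma}F_\gamma\otimes\rho_\Gamma(\gamma)$ on $\CC^r\otimes\ell^2(\Gamma)$. The key algebraic observation is that $\|P_j\|^2 = \|P_j^*P_j\|$, and $P_j^*P_j = \sum_{\gamma}G_\gamma\otimes\rho_j(\gamma)$ for a new finitely supported family $G_\gamma\in M_r(\CC)$ (obtained by the convolution $G_\gamma = \sum_{\alpha\beta=\gamma}F_\alpha^*F_\beta$, using that $\rho_j$ is a homomorphism and $\rho_j(\alpha)^* = \rho_j(\alpha^{-1})$ since $\rho_j$ is unitary). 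Since $P_j^*P_j$ is self-adjoint and positive, its operator norm equals $\lim_{m\to\infty}(\mathrm{tr}\,(P_j^*P_j)^m)^{1/m}$ — but more usefully, one relates the norm to moments.

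**Reduction to scalar coefficients.** The cleanest route is: the scalar strong convergence hypothesis \eqref{eq:def_strong_convergence} is equivalent (by a standard argument using $\|a\| = \lim_m \|(a^*a)^m\|^{1/2m}$ and $\|b\| = \sup_m (\mathrm{tr}_\Gamma(b^{2m}))^{1/2m}$ for $b$ self-adjoint in the group von Neumann algebra, together with the fact that normalized traces of words in $\rho_j$ converge to the canonical trace on $\CC[\Gamma]$) to the statement that for every self-adjoint $b\in\CC[\Gamma]$, the spectral measure of $\rho_j(b)$ converges weakly to that of $\rho_\Gamma(b)$ \emph{and} the top of the spectrum converges. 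Matrix amplification then follows because $M_r(\CC[\Gamma]) = M_r(\CC)\otimes\CC[\Gamma]$ sits inside $M_r(\CC)\otimes\Gamma$-von Neumann algebra, and the representation $\mathrm{id}_{M_r}\otimes\rho_j$ of this matrix algebra still has the property that normalized traces of its elements converge — because a normalized trace on $M_r(\CC)\otimes B(W_j)$ is just $\frac{1}{r}\mathrm{Tr}_{M_r}\otimes(\text{normalized trace on }B(W_j))$, and expanding any word in the generators $F_\gamma\otimes\rho_j(\gamma)$ reduces the $B(W_j)$-part to normalized traces of words in the $\rho_j(\gamma)$, which converge by hypothesis, while the $M_r$-part is a fixed scalar. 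One then invokes the standard lemma (see \cite[Section 9]{haagerup-thorb_ANewApplication_2005}) that convergence of all normalized moments plus a uniform norm bound upgrades to convergence of norms; the uniform norm bound $\|P_j\|\le\sum_\gamma\|F_\gamma\|_{op}$ is immediate from the triangle inequality and unitarity of $\rho_j$.

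**The probabilistic wrapper.** When the $\rho_j$ are random, each quantity $\mathrm{tr}(\text{word in }\rho_j(\gamma))$ is a random variable; the hypothesis gives a.a.s.\ (resp.\ a.s.) convergence for the finitely many words appearing in any fixed polynomial, hence a.a.s.\ (resp.\ a.s.) convergence of each normalized moment of $P_j^*P_j$, and then of $\|P_j\|$. Since only finitely many words are involved for fixed $r$ and fixed $F_\gamma$, finite intersections of a.a.s.\ (resp.\ a.s.) events are again a.a.s.\ (resp.\ a.s.), so no uniformity issue arises.

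**Main obstacle.** The only genuine subtlety is passing from convergence of \emph{all} normalized moments to convergence of the \emph{operator norm}: weak convergence of spectral measures alone does not control the edge of the spectrum, so one needs the qualitative input that the spectra of $\rho_j(b)$ cannot have outliers escaping the limiting support — and this is precisely what the hypothesis of \emph{strong} (not merely weak-$*$) convergence supplies. I would therefore make the reduction to the scalar case explicit enough that the matrix version is literally an instance of the scalar version applied to the self-adjoint element $P_j^*P_j$ viewed inside an ampliated group algebra, and cite \cite{haagerup-thorb_ANewApplication_2005} for the mechanics of the ampliation, rather than reproving the moment-to-norm implication from scratch.
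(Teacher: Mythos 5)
There is a genuine gap, and it sits exactly at the point you flag as the ``main obstacle''. Your plan is to pass to the self-adjoint element $P_j^*P_j$, establish convergence of its normalized moments, and then rule out spectral outliers by appealing to strong convergence. But the outliers you must exclude are those of $P_j^*P_j=(\mathrm{id}_{M_r}\otimes\rho_j)(F^*F)$, which is \emph{not} of the form $\rho_j(b)$ for any $b\in\CC[\Gamma]$: it lives in the matrix amplification $M_r(\CC)\otimes\rho_j(\CC[\Gamma])$. The hypothesis (\ref{eq:def_strong_convergence}) controls outliers only for scalar elements $\rho_j(b)$, so invoking ``strong convergence supplies no outliers'' for $P_j^*P_j$ is circular --- the absence of outliers for matrix-amplified elements is precisely what the proposition asserts. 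Even granting trace convergence, the moment method yields only the lower bound $\liminf_j\norm{P_j}\geq\norm{P_\infty}$ (since $\tau_\infty\bigl((F^*F)^m\bigr)^{1/2m}\to\norm{P_\infty}$ as $m\to\infty$ by faithfulness of the trace); the substantive direction $\limsup_j\norm{P_j}\leq\norm{P_\infty}$ is not supplied by your sketch. A secondary issue: the hypothesis is convergence of \emph{norms} only, with no assumption on normalized traces, so your claimed equivalence with weak convergence of spectral measures is not immediate --- it requires the uniqueness of the tracial state on $C^*_r(\Gamma)$ for $\Gamma$ free (Powers), which you neither state nor prove.

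The argument the paper relies on, and what \cite[Section 9]{haagerup-thorb_ANewApplication_2005} actually does, is operator-algebraic rather than moment-based. The scalar upper bound $\limsup_j\norm{\rho_j(f)}\leq\norm{\rho_\Gamma(f)}$ means that $f\mapsto(\rho_j(f))_j$ extends to a $*$-homomorphism from $C^*_r(\Gamma)$ into a suitable ultraproduct of the algebras $B(W_j)$; a $*$-homomorphism of $C^*$-algebras is automatically \emph{completely} contractive, which gives $\limsup_j\norm{(\mathrm{id}\otimes\rho_j)(F)}\leq\norm{(\mathrm{id}\otimes\rho_\Gamma)(F)}$ for every $F\in M_r(\CC[\Gamma])$. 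The scalar lower bound makes this homomorphism injective, hence completely isometric, which gives the reverse inequality. To repair your proof you would need to replace the outlier step by such a complete-boundedness argument (or by a genuine linearization-trick reduction to scalar elements); as written, the proof does not close.
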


We need a mild extension of Proposition \ref{prop:Bordenave-Collins+linearization}
allowing the $F_{\gamma}$ to be compact operators an infinite dimensional
Hilbert space, which is obtained via a standard approximation argument
(see, for example, \cite[proof of Prop. 6.2]{magee-thomas_StronglyConvergent_2023}). 
\begin{cor}
\label{cor:Bordenave_Collins_plus_epsilon}For any finitely supported
map $\gamma\mapsto T_{\gamma}$ on $\Gamma$ taking values in the
compact operators of a separable Hilbert space $\Hc$, we have
\[
\lim_{i\to\infty}\norm{\sum_{\gamma\in\Gamma}T_{\gamma}\otimes\rho_{i}(\gamma)}_{W\otimes\CC^{n_{i}}}=\norm{\sum_{\gamma\in\Gamma}T_{\gamma}\otimes\rho_{\Gamma}(\gamma)}_{W\otimes\ell^{2}(\Gamma)}
\]
 (a.s or a.a.s., respectively, if the representations are random).
\end{cor}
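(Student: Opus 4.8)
The plan is to deduce Corollary \ref{cor:Bordenave_Collins_plus_epsilon} from Proposition \ref{prop:Bordenave-Collins+linearization} by a finite-rank approximation of the compact coefficients, exactly as in the standard argument of \cite[proof of Prop. 6.2]{magee-thomas_StronglyConvergent_2023}. Let $S\subseteq\Gamma$ denote the finite support of $\gamma\mapsto T_\gamma$. Fixing $\eps>0$, I would first use that every compact operator on a separable Hilbert space is an operator-norm limit of finite-rank operators to choose, for each $\gamma\in S$, a finite-rank $T_\gamma^{(\eps)}$ on $\Hc$ with $\opnorm{T_\gamma-T_\gamma^{(\eps)}}<\eps$, setting $T_\gamma^{(\eps)}=0$ for $\gamma\notin S$. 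Picking a finite-dimensional subspace $V_0\subseteq\Hc$ that contains the ranges of all the $T_\gamma^{(\eps)}$ and of all the $(T_\gamma^{(\eps)})^{*}$, $\gamma\in S$, and fixing an orthonormal basis of $V_0$, each $T_\gamma^{(\eps)}$ becomes a matrix $F_\gamma\in M_r(\CC)$, $r=\dim V_0$, acting on the $V_0$-block and by $0$ on $V_0^{\perp}$. Consequently, for any unitary representation $(\rho,W)$ of $\Gamma$ the operator $\sum_\gamma T_\gamma^{(\eps)}\otimes\rho(\gamma)$ is the orthogonal direct sum of $\sum_\gamma F_\gamma\otimes\rho(\gamma)$ on $V_0\otimes W$ and $0$ on $V_0^{\perp}\otimes W$, so it has the same operator norm as $\sum_\gamma F_\gamma\otimes\rho(\gamma)$, to which Proposition \ref{prop:Bordenave-Collins+linearization} applies.

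The second ingredient is the deterministic error bound: since each $\rho(\gamma)$ is unitary,
\[
\opnorm{\sum_{\gamma\in\Gamma}\bigl(T_\gamma-T_\gamma^{(\eps)}\bigr)\otimes\rho(\gamma)}\;\le\;\sum_{\gamma\in S}\opnorm{T_\gamma-T_\gamma^{(\eps)}}\;<\;|S|\,\eps,
\]
uniformly over the probability space, and valid for both $\rho=\rho_i$ and $\rho=\rho_\Gamma$. Combining this with the reverse triangle inequality $\bigl|\,\opnorm{A}-\opnorm{B}\,\bigr|\le\opnorm{A-B}$ gives
\[
\Bigl|\,\opnorm{\textstyle\sum_\gamma T_\gamma\otimes\rho_i(\gamma)}-\opnorm{\textstyle\sum_\gamma T_\gamma\otimes\rho_\Gamma(\gamma)}\,\Bigr|\;\le\;2|S|\eps+\Bigl|\,\opnorm{\textstyle\sum_\gamma F_\gamma\otimes\rho_i(\gamma)}-\opnorm{\textstyle\sum_\gamma F_\gamma\otimes\rho_\Gamma(\gamma)}\,\Bigr|,
\]
and Proposition \ref{prop:Bordenave-Collins+linearization} shows that the last term tends to $0$ (a.s.\ or a.a.s., according to the hypothesis) as $i\to\infty$.

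To finish, I would let $\eps\to0$ with the usual bookkeeping. Given $\eta>0$, choose $\eps$ with $2|S|\eps<\eta/2$; the displayed inequality then shows that the event $\{\,|\opnorm{\sum_\gamma T_\gamma\otimes\rho_i(\gamma)}-\opnorm{\sum_\gamma T_\gamma\otimes\rho_\Gamma(\gamma)}|>\eta\,\}$ is contained in $\{\,|\opnorm{\sum_\gamma F_\gamma\otimes\rho_i(\gamma)}-\opnorm{\sum_\gamma F_\gamma\otimes\rho_\Gamma(\gamma)}|>\eta/2\,\}$. In the a.a.s.\ case the probability of the latter event tends to $0$; in the a.s.\ case it is summable in $i$; either way the desired conclusion follows. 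I do not expect a genuine obstacle here: the only points needing attention are that a single finite-dimensional block $V_0$ can carry all the truncated coefficients simultaneously — so that Proposition \ref{prop:Bordenave-Collins+linearization} literally applies — and that the truncation error above is a deterministic quantity uniform over the probability space, which is exactly what licenses the inclusion of events in the last step.
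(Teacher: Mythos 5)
Your proof is correct and takes essentially the same approach as the paper, which simply cites the standard finite-rank approximation argument of \cite[proof of Prop.\ 6.2]{magee-thomas_StronglyConvergent_2023}; the two points you highlight (placing all truncated coefficients in a single finite-dimensional block $V_0$ containing both the ranges of the $T_\gamma^{(\eps)}$ and of their adjoints, and the uniform deterministic truncation error $|S|\eps$) are exactly the ones that make the reduction to Proposition \ref{prop:Bordenave-Collins+linearization} rigorous.
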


\subsection{The proof of Theorem \ref{thm:sc_deterministic}\label{subsec:proof_sc_deter}}

In this section we consider a Fuchsian Schottky group $\Gamma$ and
a sequence $(\rho_{j})_{j\geq1}$ of random unitary representations
of $\Gamma$ a.a.s. strongly converging to $\rho_{\Gamma}$. Recall---see
(\ref{eq:powers_of_transf_op})---that the powers of the transfer
operators can be written as
\[
\Lc_{s,\rho_{j}}^{\ell}=\sum_{w\in\Gamma_{\ell+1}}M_{w,s}\otimes\rho_{j}(\backspace w\inv).
\]
By Corollary \ref{cor:Bordenave_Collins_plus_epsilon} we know that,
for big $n$, the size of $\Lc_{s,\rho_{i}}^{\ell}$ is governed a.a.s.
by that of the \emph{limit operator}
\begin{equation}
\Tc_{\ell,s}\eqdf\sum_{w\in\Gamma_{\ell+1}}M_{w,s}\otimes\rho_{\Gamma}(\backspace w\inv)\label{eq:def_Tls}
\end{equation}
on $\Hc(\Dne)\otimes\ell^{2}(\Gamma)$. We will prove Theorem \ref{thm:sc_deterministic}
by showing that we can choose $\ell=\ell(\Gamma,\Kc)$ such that $\opnorm{\Tc_{\ell,s}}<1$
for any $s\in\Kc$.

The key in the next norm bound of $\Tc_{\ell,s}$, whose proof is
deferred to Section \ref{sec:Norm-estimation-of}.
\begin{prop}
\label{prop:norm_Tks}Let $\Gamma$ be a Fuchsian Schottky group.
There are constants $A_{\Gamma}>0,C_{\Gamma}>1$ and $\tau_{\Gamma}\in(0,1)$
with the next property: for any integer $\ell\ggG1$, any $\eps>0$
and any $s\in\CC$ with $\re s\geq\frac{\gexp}{2}+\eps$ we have
\[
\opnorm{\Tc_{\ell,s}}\leq A_{\Gamma}C_{\Gamma}^{\re s}e^{\pi|\im s|}(\ell+1)\tau_{\Gamma}^{\eps\ell}.
\]
\end{prop}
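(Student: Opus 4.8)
The plan is to combine Buchholz's inequality for operator-valued convolutions on free groups with the a priori estimates of Section~\ref{sec:Deterministic-a-priori}.

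\emph{Step 1 (Buchholz).} Since $\backspace{w}\inv$ has word length exactly $\ell$ for every $w\in\Gamma_{\ell+1}$, grouping the terms of \eqref{eq:def_Tls} according to the value of $g=\backspace{w}\inv$ rewrites $\Tc_{\ell,s}$ as a convolution over the length-$\ell$ sphere of $\Gamma$:
\[
\Tc_{\ell,s}=\sum_{g\in\Gamma_\ell}C_{g,s}\otimes\rho_\Gamma(g),\qquad C_{g,s}=\sum_{a}M_{g\inv a,s},
\]
where the inner sum runs over the $2N_\Gamma-1$ letters $a$ with $g\inv a$ reduced. Buchholz's inequality, applied to the free group $\Gamma$ with its left regular representation, then gives
\[
\opnorm{\Tc_{\ell,s}}\ \le\ (\ell+1)\,\max_{0\le k\le\ell}\opnorm{A_k^{(s)}},
\]
where $A_k^{(s)}$ is the block operator $\bigoplus_{g_2\in\Gamma_{\ell-k}}\Hc(\Dne)\to\bigoplus_{g_1\in\Gamma_k}\Hc(\Dne)$ whose $(g_1,g_2)$-block equals $C_{g_1g_2,s}$ when $g_1g_2$ is reduced and $0$ otherwise. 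The factor $\ell+1$ here is exactly the one in the Proposition, so it remains to bound each $\opnorm{A_k^{(s)}}$ by $A_\Gamma C_\Gamma^{\re s}e^{\pi|\im s|}\tau_\Gamma^{\eps\ell}$, uniformly in $k$.

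\emph{Step 2 (factoring the blocks).} Writing $(g_1g_2)\inv=g_2\inv g_1\inv$ and applying the chain rule for Möbius maps, together with the branch bookkeeping already carried out in Section~\ref{subsec:Transfer-operators-and} (the function $\theta$ of \eqref{eq:def_theta}, which legitimises splitting $\backspace{w}'(z)^s$ multiplicatively along a factorisation of $\backspace{w}$), one factors each block as $C_{g_1g_2,s}=L_{g_1,s}\circ R_{g_2,s}$, where $R_{g_2,s}$ is the weighted composition by $g_2\inv$ (composed with the projection onto $\Hc(D_{E(g_2)\inv})$) and depends only on $g_2$, while $L_{g_1,s}$ is the weighted composition by $g_1\inv$ summed over admissible last letters and depends only on $g_1$. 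The Schottky/ping-pong dynamics is what makes the domains of these maps fit together precisely when $g_1g_2$ is reduced, so $A_k^{(s)}$ itself splits as a composition of a ``left half'' built from the $L_{g_1,s}$ and a ``right half'' built from the $R_{g_2,s}$ (for $k\le 1$ the left half is trivial and is handled directly).

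\emph{Step 3 (estimating the halves).} For each half I would estimate $\int_{\Dne}\norm{A_k^{(s)}\xi(z)}^2\dd z$ by a change of variables combined with the reproducing-kernel estimates of Section~\ref{sec:Deterministic-a-priori} (Lemma~\ref{lem:estimate_of_restriction_Bergmann_spaces}, Corollary~\ref{cor:norm_restriction_Bergman}, Lemma~\ref{lem:norm_evaluation_map}), the derivative estimate \eqref{eq:upsilon-deriv-eq}, the mirror estimate \eqref{eq:upsilon-mirror-eq}, and the pointwise bound $|\backspace{w}'(z)^s|\le|\backspace{w}'(z)|^{\re s}e^{\pi|\im s|}$ coming from the branch \eqref{eq:branch_log} (the phase factor $e^{\pi|\im s|}$ is pulled out once, since only the modulus $|(g_1g_2)\inv{}'(z)|^{\re s}$ needs to be split between the halves). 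This yields $\opnorm{R_{g_2,s}}\llG C_\Gamma^{\re s/2}e^{\pi|\im s|/2}\len{g_2}^{\re s}$ and a matching bound for $L_{g_1,s}$ (using $\len{g_1}\asymG\len{g_1\inv}$). The decisive step is then a Cauchy–Schwarz inequality exploiting the $L^2$-holomorphic (Bergman) structure — not a crude supremum bound — which converts $\opnorm{A_k^{(s)}}$ into the square root of $\bigl(\sum_{g_1\in\Gamma_k}\len{g_1}^{2\re s}\bigr)\bigl(\sum_{g_2\in\Gamma_{\ell-k}}\len{g_2}^{2\re s}\bigr)$, up to the constants above and the $(2N_\Gamma-1)$-fold count of last letters. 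Since $2\re s\ge\gexp+2\eps$, Lemma~\ref{lem:exp_sums_of_boundary_lengths} bounds the first sum by $\llG\tau_\Gamma^{2\eps k}$ and the second by $\llG\tau_\Gamma^{2\eps(\ell-k)}$, so the square root is $\llG C_\Gamma^{\re s}e^{\pi|\im s|}\tau_\Gamma^{\eps\ell}$, uniformly in $k$. Feeding this into Step~1 gives the Proposition, with $A_\Gamma$ absorbing the implied constants.

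The main obstacle is Steps~2–3: organising the factorisation so that the reducedness constraint $g_1g_2\in\Gamma_\ell$ is compatible with the splitting (the intermediate Bergman space and its domain depend on $g_1$ through $E(g_1)$), and — more importantly — extracting the factor of $2$ in the exponent from the Hilbert-space geometry of the Bergman spaces. It is precisely this ``$2\re s$'' that lets Lemma~\ref{lem:exp_sums_of_boundary_lengths} be applied at the threshold $\re s>\gexp/2$ rather than $\re s>\gexp$, i.e.\ it is the source of the conjecturally optimal constant.
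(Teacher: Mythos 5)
Your Step~1 coincides with the paper's: expressing $\Tc_{\ell,s}$ as $\sum_{w\in\Gamma_\ell}T_{w,s}\otimes\rho_\Gamma(w)$ and invoking Buchholz's inequality (Theorem~\ref{thm:Buchholz_bound}) to reduce to $\max_{0\le j\le\ell}\opnorm{\Rc(j,\ell-j)}$. From there, however, the two arguments diverge. You propose to \emph{factorise} each block $C_{g_1g_2,s}=L_{g_1,s}\circ R_{g_2,s}$ via the chain rule and the Schottky ping--pong combinatorics, then split $A_k^{(s)}$ as a composition of a ``left half'' and a ``right half'', and finally bound each half by a Cauchy--Schwarz argument. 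The paper instead avoids any factorisation: it proves an elementary block-operator estimate (Lemma~\ref{lem:opnorms_vs_HSnorm}), which together with the triangle inequality gives directly the Hilbert--Schmidt-type bound
\[
\opnorm{\Rc(j,\ell-j)}\;\le\;\Bigl(\sum_{w\in\Gamma_\ell}\opnorm{T_{w,s}}^2\Bigr)^{\frac12},
\]
then inserts the coefficient bound $\opnorm{T_{w,s}}\llG e^{\pi|\im s|}C_\Gamma^{\re s}\len{w}^{\re s}$ (Lemma~\ref{lem:bound_Tws}) and finishes with Lemma~\ref{lem:exp_sums_of_boundary_lengths}. Note that the bound you obtain, $\bigl(\sum_{g_1}\len{g_1}^{2\re s}\bigr)^{1/2}\bigl(\sum_{g_2}\len{g_2}^{2\re s}\bigr)^{1/2}$, dominates the paper's $\bigl(\sum_{w\in\Gamma_\ell}\len{w}^{2\re s}\bigr)^{1/2}$ (by rough multiplicativity of $\len{\cdot}$), so both deliver the same final exponent.

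Your approach can be made to work, but two remarks are in order. First, the domain/range bookkeeping of the factorisation, which you flag as the main obstacle, is genuinely fiddly (the intermediate Bergman space must be taken as $\bigoplus_{a}\Hc(D_a)$ with $R_{a,g_2}$ supported on $a\neq S(g_2)$ and $L_{g_1,a}$ supported on $a=\widetilde{E(g_1)}$, precisely so that $LR$ reproduces the reducedness constraint $g_1\to g_2$), and you also have to treat $k\le1$ and $k\ge\ell-1$ separately since $\len{g}$ need not be $<1$ for short words. The paper's route sidesteps all of this because the Hilbert--Schmidt sum is over $\Gamma_\ell$ regardless of the split $j$. Second, the attribution of the crucial exponent $2\re s$ to ``the Hilbert-space geometry of the Bergman spaces'' and to the factorisation is misleading: in the paper's proof the factor of $2$ comes from the completely general operator-theoretic fact underlying Lemma~\ref{lem:opnorms_vs_HSnorm} (essentially $\opnorm{T}^2\le\sum_i\opnorm{T_i}^2$ plus $\opnorm{T_i}^2=\opnorm{T_iT_i^*}$), not from anything specific to holomorphy or to a left/right split. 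The Bergman structure enters only through the coefficient estimate $\opnorm{T_{w,s}}\llG\len{w}^{\re s}$, which is linear in $\re s$, not quadratic.
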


Taking Proposition \ref{prop:norm_Tks} momentarily for granted, we
establish now Theorem \ref{thm:sc_deterministic}.

\begin{proof}[Proof of Theorem \ref{thm:sc_deterministic}]
Consider $A_{\Gamma}>0,C_{\Gamma}>1$ and $\tau_{\Gamma}\in(0,1)$
as in Proposition \ref{prop:norm_Tks}. It suffices to prove the result
when $\Kc$ is a rectangle of the form
\[
\Kc=\left\{ s\in\CC\mid\frac{\gexp}{2}+\eps\leq\re s\leq M_{r},|\im s|\leq M_{i}\right\} ,
\]
with $\eps\in\left(0,1\right)$ and $M_{r},M_{i}>1$. We fix these
three parameters for the rest of the proof, as well as $\ell=\ell(\Kc,\Gamma)$
big enough so that Lemma \ref{lem:transf_op_dependence_on_s} and
Proposition \ref{prop:norm_Tks} apply to it, and
\begin{equation}
A_{\Gamma}C_{\Gamma}^{M_{r}}e^{\pi M_{i}}(\ell+1)\tau_{\Gamma}^{\eps\ell}<1-2\eps.\label{eq:FIN0}
\end{equation}
Consider also 
\[
\eps_{1}=\eps_{1}(\Gamma,\Kc)\eqdf\frac{\eps}{2J_{\Gamma}(\ell+1)\left[(2N_{\Gamma}-1)C_{\Kc}^{B_{\Gamma}}\right]^{\ell+1}},
\]
where $J_{\Gamma},B_{\Gamma}$ and $C_{\Kc}$ are as in Lemma \ref{lem:transf_op_dependence_on_s}. 

Consider, as in the statement, a sequence $(\rho_{j},W_{j})_{j\ge1}$
a sequence of finite dimensional unitary representations of $\Gamma$
that a.a.s. strongly converges to $\rho_{\Gamma}$. Recall that under
the isomorphism $\Hc(\Dne,W_{j})\to\Hc(\Dne)\otimes W_{j}$ given
by (\ref{eq:isomorphism_Bergmann_spaces}) we have
\[
\Lc_{s,\rho_{j}}^{\ell}=\sum_{w\in\Gamma_{\ell}}T_{w,s}\otimes\rho_{j}(w),
\]
with $T_{w,s}$ as in (\ref{eq:def_Txs})---see Lemma \ref{lem:transf_op_as_sum_of_tensors}---. 

We denote by $\mathscr{E}_{s,j}$ the event that 
\[
\opnorm{\Lc_{s,\rho_{j}}^{\ell}}\leq\opnorm{\Tc_{\ell,s}}+\eps,
\]
where $\Tc_{\ell,s}$ is the limiting operator on $\Hc(\Dne)\otimes\ell^{2}(\Gamma)$
defined in (\ref{eq:Limit_op}). Each $T_{w,s}$ is a compact operator
of $\Hc(\Dne)$ since it is a sum of $M_{w\inv b,s}$, which are compact
as is explained right after (\ref{eq:def_Mws}).

The bound of the norm of $\Tc_{\ell,s}$ of Proposition \ref{prop:norm_Tks}
and (\ref{eq:FIN0}) imply that for any $s\in\Kc$, on $\mathscr{E}_{s,j}$
we have
\begin{equation}
\opnorm{\Lc_{s,\rho_{j}}^{\ell}}<1-\eps.\label{eq:FIN1}
\end{equation}
Let $\mathtt{S}$ be a finite subset of $\Kc$ such that the open
disks with center in $\mathtt{S}$ and radius $\eps_{1}$ cover $\Kc$,
and let $\mathscr{E}_{j}=\cap_{\mathtt{s}\in\mathtt{S}}\mathscr{E}_{\mathtt{s},j}$. 

Since $\mathtt{S}$ is finite, then $\PP(\mathscr{E}_{j})\to1$ as
$j\to\infty$. Consider any $s\in\Kc$. Take $\mathtt{s}\in\mathtt{S}$
such that $|s-\mathtt{s}|<\eps_{1}$. By Lemma \ref{lem:transf_op_dependence_on_s}
and (\ref{eq:FIN1}), on $\mathscr{E}_{j}$ we have
\begin{align*}
\opnorm{\Lc_{s,\rho_{j}}^{\ell}} & \leq\opnorm{\Lc_{s,\rho_{j}}^{\ell}-\Lc_{\mathtt{s,\rho_{j}}}^{\ell}}+\opnorm{\Lc_{\mathtt{s,\rho_{j}}}^{\ell}}\\
 & <\eps_{1}J_{\Gamma}(\ell+1)[(2N_{\Gamma}-1)C_{\Kc}^{B_{\Gamma}}]^{\ell+1}+(1-\eps)=1-\frac{\eps}{2},
\end{align*}
which completes the proof.
\end{proof}

\section{Norm estimation of the limit operators\label{sec:Norm-estimation-of}}

The goal of this section is to prove the norm estimate of the limit
operators $\Tc_{\ell,s}$---Proposition \ref{prop:norm_Tks}---,
which was used to establish Theorem \ref{thm:sc_deterministic}. By
definition, these operators are finite sums of the form $\sum_{\gamma}T_{\gamma}\otimes\rho_{\Gamma}(\gamma)\in B(\Hc\otimes\ell^{2}(\Gamma))$,
where $\rho_{\Gamma}$ is the left regular representation of a free
group $\Gamma$ and $\Hc$ is a Hilbert space. Let $\TT$ be any operator
of this form. Here are the ingredients of our proof: the first and
key one is a bound due to Buchholz of $\norm{\TT}_{\Hc\otimes\ell^{2}(\Gamma)}$
in terms of the norms of certain matrices $\Rc_{j}$ with operator
coefficients, determined by the $(T_{\gamma})_{\gamma\in\Gamma}$.
This result is presented in Section \ref{subsec:Buchholz-inequality}.
Then, in Section \ref{subsec:Auxiliary-results} we prove a lemma
allowing us to bound $\opnorm{\Rc_{j}}$ by its ``Hilbert-Schmidt
norm'', and we estimate the size of the $(T_{\gamma})_{\gamma\in\Gamma}$
corresponding to the limit operators. We combine these results in
Section \ref{subsec:proof_bound_Tks} to prove Proposition \ref{prop:norm_Tks}.

\subsection{Buchholz inequality \label{subsec:Buchholz-inequality}}

In this section, $\Gamma$ is a free group of rank $N$ and we use
the following notation: Let us fix a basis $\Bc=\{\gamma_{1},\ldots,\gamma_{N}\}$
of $\Gamma$ and work with the word length $\wlength{\cdot}$ on $\Gamma$
with respect to $\Bc\cup\Bc\inv$. As before, $\Gamma_{k}$ is the
set of $\gamma\in\Gamma$ with $\wlength{\gamma}=k$. Recall that
$B(\Hc)$ is the group of bounded linear operators on a Hilbert space
$\Hc$. For any subset $\Lambda$ of $\Gamma$, we denote by $L^{2}(\Lambda,\Hc)$
the Hilbert space of $L^{2}$-maps $\Lambda\to\Hc$ with respect to
the counting measure on $\Lambda$. 

We start with some motivation: It was shown in \cite{haagerup_AnExample_1979}
that\emph{ }for any integer $m\geq1$ and any $\alpha\in L^{2}(\Gamma_{m},\CC)$
one has
\begin{equation}
\norm{\sum_{w\in\Gamma_{m}}\alpha_{w}\rho_{\Gamma}(w)}_{\ell^{2}(\Gamma)}\leq(m+1)\norm{\alpha}_{L^{2}}.\label{eq:Haagerup_ineq}
\end{equation}
This is commonly known as \emph{Haagerup's }inequality. It has various
applications and generalizations---see \cite{delaSalle_StrongHaagerup_2009}
for details---. What we need here is a version of (\ref{eq:Haagerup_ineq})
where the $\alpha_{w}$ are operators. This was established by Buchholz
in \cite{buchholz_norm_1999}.

Consider a map $R:\Gamma\to B(\Hc),w\mapsto R_{w}$. For any nonnegative
integers $m,n$ we define the map $\widetilde{\Rc}(m,n):L^{2}(\Gamma_{m},\Hc)\to L^{2}(\Gamma_{n},\Hc)$
by
\begin{equation}
[\widetilde{\Rc}(m,n)f](w_{n})=\sum_{w_{m}\in\Gamma_{m}}R_{w_{m}w_{n}}(f(w_{m})).\label{eq:def_Rmn}
\end{equation}
We set $\Rc(m,n)\eqdf\Ttt_{n}^{*}\widetilde{R}(m,n)\mathtt{\Ttt}_{m}$,
where $\mathtt{T}_{j}$ is the orthogonal projection $L^{2}(\Gamma,\Hc)\to L^{2}(\Gamma_{j},\Hc)$.
We state below only the inequality from \cite[Theorem 2.8]{buchholz_norm_1999}
that we need for our purposes.
\begin{thm}
\label{thm:Buchholz_bound}Let $\Gamma$ be a free group of finite
rank $N\geq2$. Consider a Hilbert space $\Hc$ and a map $\Gamma\to B(\Hc),\gamma\mapsto R_{\gamma}$
supported on $\Gamma_{\ell}$, for some integer $\ell\geq1$. Then
\[
\norm{\sum_{\gamma\in\Gamma_{\ell}}R_{\gamma}\otimes\rho_{\Gamma}(\gamma)}_{\Hc\otimes\ell^{2}(\Gamma)}\leq(\ell+1)\max_{0\leq j\leq\ell}\opnorm{\Rc(j,\ell-j)}.
\]
\end{thm}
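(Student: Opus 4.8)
The plan is to establish Theorem~\ref{thm:Buchholz_bound} --- the operator-valued Haagerup inequality of \cite{buchholz_norm_1999} --- by splitting $\TT\eqdf\sum_{\gamma\in\Gamma_{\ell}}R_{\gamma}\otimes\rho_{\Gamma}(\gamma)$ into $\ell+1$ pieces according to how much cancellation a word of length $\ell$ can undergo against its argument, bounding each piece by the norm of one of the matrices $\Rc(j,\ell-j)$, and then summing with the triangle inequality, which costs only the factor $\ell+1$.

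To produce the decomposition I would realize $\ell^{2}(\Gamma)$ through left creation operators: for $a\in\Bc\cup\Bc\inv$ let $L_{a}\delta_{g}\eqdf\delta_{ag}$ when $\wlength{ag}=\wlength g+1$ and $L_{a}\delta_{g}\eqdf0$ otherwise, where $(\delta_{g})_{g\in\Gamma}$ is the standard basis. These are partial isometries obeying $L_{a}^{*}L_{a}\le1$ and $\sum_{a\in\Bc\cup\Bc\inv}L_{a}L_{a}^{*}\le1$, and a direct computation gives $\rho_{\Gamma}(a)=L_{a}+L_{a\inv}^{*}$. For a reduced word $\gamma=a_{1}\cdots a_{\ell}$ one expands $\rho_{\Gamma}(\gamma)=\prod_{i=1}^{\ell}\bigl(L_{a_{i}}+L_{a_{i}\inv}^{*}\bigr)$; because $L_{a}^{*}L_{b}=0$ for $a\neq b$ and $\gamma$ is reduced (so $a_{i+1}\ne a_{i}\inv$), every nonvanishing term of the expansion has all its ``creation'' factors $L_{a_{i}}$ in a prefix block $i\le\ell-j$ and all its ``annihilation'' factors $L_{a_{i}\inv}^{*}$ in a suffix block $i>\ell-j$, for some $0\le j\le\ell$, and such a term equals $L_{u}L_{v\inv}^{*}$ where $\gamma=uv$ with $\wlength u=\ell-j$, $\wlength v=j$. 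Grouping by $j$ gives
\[
\TT=\sum_{j=0}^{\ell}\TT_{j},\qquad\TT_{j}=\sum_{\substack{u\in\Gamma_{\ell-j},\,v\in\Gamma_{j}\\u\to v}}R_{uv}\otimes L_{u}L_{v\inv}^{*}.
\]

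The core step is the bound $\opnorm{\TT_{j}}\le\opnorm{\Rc(j,\ell-j)}$ for each $j$. One uses that $\TT_{j}$ preserves the ``tail'': $L_{v\inv}^{*}$ annihilates $\delta_{g}$ unless $g=v\inv t$ for a unique reduced factorization, in which case it returns $\delta_{t}$, and then $L_{u}$ returns $\delta_{ut}$. Hence, over distinct reduced tails $t$, the source subspaces $\Hc\otimes(\text{closed span of }\{\delta_{v\inv t}:v\in\Gamma_{j}\})$ are mutually orthogonal, as are the target subspaces $\Hc\otimes(\text{closed span of }\{\delta_{ut}:u\in\Gamma_{\ell-j}\})$, and on each tail-block $\TT_{j}$ is the compression --- to the subspaces spanned by the admissible $\delta_{v\inv t}$ and $\delta_{ut}$ --- of the single operator $x\otimes\delta_{v}\mapsto\sum_{u}R_{uv}(x)\otimes\delta_{u}$ from $\Hc\otimes\ell^{2}(\Gamma_{j})$ to $\Hc\otimes\ell^{2}(\Gamma_{\ell-j})$. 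A compression does not increase the operator norm and the tail-blocks are orthogonal, so $\opnorm{\TT_{j}}$ is at most the norm of that operator, which --- after the elementary reindexing $v\mapsto v\inv$ and, if needed, a passage to the adjoint, neither of which changes norms --- equals $\opnorm{\Rc(j,\ell-j)}$. Summing the $\ell+1$ pieces yields $\norm{\TT}\le(\ell+1)\max_{0\le j\le\ell}\opnorm{\Rc(j,\ell-j)}$. (Alternatively one can avoid naming the $L_{a}$ by writing $\TT_{j}$ directly as a composition of $\ell-j$ ``creation'' and $j$ ``annihilation'' maps and absorbing all but one factor via $\sum_{a}L_{a}L_{a}^{*}\le1$ and $L_{a}^{*}L_{a}\le1$, which is essentially the route of \cite{buchholz_norm_1999}.)

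I expect the real difficulty to be bookkeeping rather than analysis. One must verify carefully that the expansion of $\prod_{i}(L_{a_{i}}+L_{a_{i}\inv}^{*})$ collapses exactly to the prefix-creation/suffix-annihilation terms with the stated coefficient $L_{u}L_{v\inv}^{*}$ (this rests on reducedness of $\gamma$ and on $L_{a}^{*}L_{b}=0$ for $a\ne b$), and then reconcile the matrix $(u,v)\mapsto R_{uv}$ coming out of a tail-block with the definition $[\widetilde{\Rc}(m,n)f](w_{n})=\sum_{w_{m}}R_{w_{m}w_{n}}(f(w_{m}))$, keeping track of the inverses and of whether the cancelling block sits on the left or the right. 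It is exactly the symmetric form $\max_{0\le j\le\ell}$ of the right-hand side, together with the invariance of $\opnorm{\Rc(m,n)}$ under adjunction, that makes this last identification harmless; once it is settled, the only genuinely analytic input is the pair of inequalities $L_{a}^{*}L_{a}\le1$ and $\sum_{a}L_{a}L_{a}^{*}\le1$.
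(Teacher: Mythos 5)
First, a point of comparison: the paper does not prove this theorem at all --- it is quoted from \cite[Theorem 2.8]{buchholz_norm_1999} (``We state below only the inequality\ldots that we need''), so there is no in-paper proof to match yours against. Your sketch is the standard creation-operator proof of Buchholz's inequality, and its main stages are sound: the identity $\rho_{\Gamma}(a)=L_{a}+L_{a\inv}^{*}$, the collapse of $\prod_{i}(L_{a_{i}}+L_{a_{i}\inv}^{*})$ to the prefix-creation/suffix-annihilation terms $L_{u}L_{v\inv}^{*}$ with $\gamma=uv$ (this does rest exactly on reducedness and on $L_{a}^{*}L_{b}=0$ for $a\neq b$), and the tail-block orthogonality plus compression argument are all correct.

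The gap is precisely at the step you yourself flagged as ``bookkeeping''. Your tail-block computation yields $\opnorm{\TT_{j}}\leq\opnorm{A^{(j)}}$, where $A^{(j)}:L^{2}(\Gamma_{j},\Hc)\to L^{2}(\Gamma_{\ell-j},\Hc)$ sends $\xi\otimes\delta_{v}\mapsto\sum_{u}R_{uv}\xi\otimes\delta_{u}$; here the word $uv$ is (output index)(input index). The paper's $\Rc(j,\ell-j)$ sends $\xi\otimes\delta_{w_{j}}\mapsto\sum_{w_{\ell-j}}R_{w_{j}w_{\ell-j}}\xi\otimes\delta_{w_{\ell-j}}$, i.e.\ the word is (input)(output). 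These two operator-valued matrices are \emph{block transposes} of one another, and neither of your proposed repairs converts one into the other: the relabeling $v\mapsto v\inv$ is a permutation unitary that does not move entries across the diagonal, and the adjoint places $R_{uv}^{*}$ (not $R_{uv}$) in the transposed slot. Block transposition does not preserve the norm of operator-valued matrices --- e.g.\ $\sum_{i,j}E_{ij}\otimes E_{ji}\in M_{n}(\CC)\otimes M_{n}(\CC)$ is unitary while $\sum_{i,j}E_{ij}\otimes E_{ij}$ has norm $n$ --- so the final identification does not close. (For $\ell=1$ the issue is invisible because transposing a row gives a column and both shapes occur in the max; for $\ell\geq2$ and intermediate $j$ it is a genuine discrepancy.) What your argument actually proves is the inequality with $\Rc(j,\ell-j)$ replaced by $A^{(j)}$; this is surely the intended statement --- an index-order slip in the paper's transcription of Buchholz, harmless downstream because the paper only ever uses the transpose-invariant bound $\opnorm{\Rc(j,\ell-j)}\leq(\sum_{w}\opnorm{R_{w}}^{2})^{1/2}$ --- but as a proof of the theorem as literally stated, the last step cannot be supplied by adjoints or reindexing.
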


\subsection{Auxiliary results\label{subsec:Auxiliary-results}}

Consider two Hilbert spaces $\Hc$ and $\Ic$. Let 
\[
\Hc^{\oplus j}\eqdf L^{2}(\{1,2,\ldots,j\},\Hc).
\]
For any linear map $T:\Hc^{\oplus m}\to\Ic^{\oplus n}$, we define
$T_{i,j}$ as $\mathtt{Q}_{i}T\mathtt{R}_{j}^{*}$, where $\mathtt{Q}_{i}$
and $\mathtt{R}_{j}$ are respectively the orthogonal projections
to the $i$-th and $j$-th factors of $\Hc^{\oplus m}$ and $\Ic^{\oplus n}$. 

The next lemma is a simple extension of a well-known inequality for
complex matrices.
\begin{lem}
\label{lem:opnorms_vs_HSnorm}For any bounded operator $T:\Hc^{\oplus m}\to\Ic^{\oplus n}$
we have
\[
\opnorm T\leq\left(\sum_{i=1}^{n}\opnorm{\sum_{j=1}^{m}T_{i,j}T_{i,j}^{*}}\right)^{\frac{1}{2}}.
\]
\end{lem}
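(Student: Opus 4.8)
The plan is to reduce the statement to the scalar block-matrix inequality and then assemble it back. First I would recall the relevant scalar fact: for a block matrix $(A_{i,j})$ of operators, viewed as a map $\bigoplus_j \Hc \to \bigoplus_i \Ic$, the operator norm is bounded by $\bigl(\sum_i \opnorm{\sum_j A_{i,j}A_{i,j}^{*}}\bigr)^{1/2}$. Concretely, write $x = (x_1,\dots,x_m) \in \Hc^{\oplus m}$ with $\norm{x} = 1$, so that $(Tx)_i = \sum_j T_{i,j}x_j$, and $\norm{Tx}^2 = \sum_i \norm{\sum_j T_{i,j}x_j}_{\Ic}^2$. For each fixed $i$, I would bound $\norm{\sum_j T_{i,j}x_j}_{\Ic}$ by treating the row operator $T_{i,\bullet}\eqdf \sum_j T_{i,j}\mathtt{R}_j : \Hc^{\oplus m} \to \Ic$; its operator norm squared equals $\opnorm{T_{i,\bullet}T_{i,\bullet}^{*}} = \opnorm{\sum_j T_{i,j}T_{i,j}^{*}}$, since $T_{i,\bullet}T_{i,\bullet}^{*} = \sum_j T_{i,j}T_{i,j}^{*}$ as an operator on $\Ic$.

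With that in hand the computation is short:
\[
\norm{Tx}^2 = \sum_{i=1}^{n}\norm{\sum_{j=1}^{m}T_{i,j}x_j}_{\Ic}^{2} \leq \sum_{i=1}^{n}\opnorm{T_{i,\bullet}}^{2}\,\norm{x}^{2} = \left(\sum_{i=1}^{n}\opnorm{\sum_{j=1}^{m}T_{i,j}T_{i,j}^{*}}\right)\norm{x}^{2}.
\]
Taking the supremum over unit vectors $x$ and then a square root gives exactly the claimed bound. The only points requiring a line of justification are the identity $T_{i,\bullet}T_{i,\bullet}^{*} = \sum_j T_{i,j}T_{i,j}^{*}$ (expand using $\mathtt{R}_j \mathtt{R}_{k}^{*} = \delta_{jk}\mathrm{Id}_{\Hc}$ and $T_{i,j} = \mathtt{Q}_i T \mathtt{R}_j^{*}$) and the standard fact $\opnorm{S}^2 = \opnorm{SS^{*}}$ for a bounded operator $S$.

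There is essentially no obstacle here; the lemma is the operator-valued analogue of the elementary estimate $\opnorm{A} \le (\sum_i \norm{\text{row}_i(A)}^2)^{1/2}$ for scalar matrices, and the Hilbert-space setting changes nothing substantive because all sums over $i,j$ are finite. If one wanted to be economical, one could simply cite the matrix version applied to the $n\times m$ array with $(i,j)$ entry $T_{i,j} \in B(\Hc,\Ic)$, regarded as a single operator between the finite direct sums; the proof above is just the unwinding of that citation. The mildest care needed is bookkeeping with the projections $\mathtt{Q}_i$, $\mathtt{R}_j$ so that the indices line up, but this is routine.
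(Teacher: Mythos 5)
Your proof is correct and follows essentially the same route as the paper: decompose $\norm{Tv}^2$ as a sum over rows, bound each row by the operator norm of the row operator, and identify that squared norm with $\opnorm{\sum_j T_{i,j}T_{i,j}^{*}}$ via $\opnorm{S}^2=\opnorm{SS^*}$. Your $T_{i,\bullet}=\sum_j T_{i,j}\mathtt{R}_j$ is exactly the paper's $T_i=\mathtt{Q}_i T$, so the arguments coincide.
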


\begin{proof}
We denote $\mathtt{Q}_{i}T$ by $T_{i}$. For all $v\in\Hc^{\oplus m}$
we have

\[
\norm{Tv}=\left(\sum_{i=1}^{n}\norm{T_{i}v}^{2}\right)^{\frac{1}{2}}\leq\left(\sum_{i=1}^{n}\opnorm{T_{i}}^{2}\right)^{\frac{1}{2}}\norm v.
\]
Thus, $\norm T\leq\left(\sum_{i}\opnorm{T_{i}}^{2}\right)^{\frac{1}{2}}$.
To conclude, note that $\opnorm{T_{i}}^{2}=\opnorm{T_{i}T_{i}^{*}}$
and
\[
T_{i}T_{i}^{*}=\sum_{j=1}^{m}T_{i,j}T_{i,j}^{*}.
\]
\end{proof}
We also need an estimate on the norm of the coefficients $M_{w,s}$---defined
in (\ref{eq:def_Mws})---of the limit operators.
\begin{lem}
\label{lem:bound_Mws}Let $\Gamma$ be a Fuchsian Schottky group.
There is $C_{\Gamma}>1$ with the next property: For any $w\in\Gamma_{\ell+1}$
with $\ell\geq1$ and all $s\in\CC$ with $\re s>0$ we have
\[
\opnorm{M_{w,s}}\llG e^{\pi|\im s|}C_{\Gamma}^{\re s}\len w^{\re s}.
\]
\end{lem}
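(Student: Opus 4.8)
The plan is to bound $\opnorm{M_{w,s}}$ by $\opnorm{\widetilde{M}_{w,s}}$, where $\widetilde{M}_{w,s}\colon\Hc(D_{\Start w})\to\Hc(D_{\End w})$ is the map from (\ref{eq:def_Mws}); this is legitimate since $M_{w,s}=\Ptt_{\End w}^{*}\widetilde{M}_{w,s}\Ptt_{\Start w}$ and both the projection $\Ptt_{\Start w}$ and the isometric inclusion $\Ptt_{\End w}^{*}$ have norm one. So I fix $\psi\in\Hc(D_{\Start w})$ and estimate $\norm{\widetilde{M}_{w,s}\psi}_{\Hc(D_{\End w})}$ against $\norm{\psi}_{\Hc(D_{\Start w})}$.

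First I would control the multiplier $\backspace w'(z)^{s}$ pointwise on $D_{\End w}$. Writing $\sigma=\re s$ and using $\backspace w'(z)^{s}=\exp(s\tau(\backspace w'(z)))$, where $\tau$ is the branch of the logarithm from (\ref{eq:branch_log}) whose imaginary part lies in $(-\pi,\pi)$, one has $|\backspace w'(z)^{s}|=|\backspace w'(z)|^{\sigma}e^{-\im s\,\im\tau(\backspace w'(z))}\leq e^{\pi|\im s|}|\backspace w'(z)|^{\sigma}$. The Derivatives estimate (\ref{eq:upsilon-deriv-eq}) gives a constant $C_{\Gamma}>1$ with $C_{\Gamma}^{-1}\len w\leq|\backspace w'(z)|\leq C_{\Gamma}\len w$ for all $z\in D_{\End w}$, so (as $\sigma>0$) $|\backspace w'(z)^{s}|\leq e^{\pi|\im s|}C_{\Gamma}^{\sigma}\len w^{\sigma}$. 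Squaring and integrating over $D_{\End w}$ yields
\[
\norm{\widetilde{M}_{w,s}\psi}_{\Hc(D_{\End w})}^{2}\leq e^{2\pi|\im s|}C_{\Gamma}^{2\sigma}\len w^{2\sigma}\int_{D_{\End w}}|\psi(\backspace w z)|^{2}\dd z,
\]
and it remains to show $\int_{D_{\End w}}|\psi(\backspace w z)|^{2}\dd z\llG\norm{\psi}_{\Hc(D_{\Start w})}^{2}$.

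Since $\backspace w$ is a M\"obius transformation restricting to a biholomorphism $D_{\End w}\to D_{w}$, I would change variables $u=\backspace w z$, whose real Jacobian equals $|\backspace w'(\backspace w^{-1}u)|^{-2}$; applying (\ref{eq:upsilon-deriv-eq}) once more as a lower bound gives $\int_{D_{\End w}}|\psi(\backspace w z)|^{2}\dd z\leq C_{\Gamma}^{2}\len w^{-2}\norm{\psi}_{\Hc(D_{w})}^{2}$. Now Corollary \ref{cor:norm_restriction_Bergman} --- itself a consequence of the reproducing-kernel bound of Lemma \ref{lem:estimate_of_restriction_Bergmann_spaces} --- supplies exactly the compensating factor $\norm{\psi}_{\Hc(D_{w})}\llG\len w\,\norm{\psi}_{\Hc(D_{\Start w})}$, so $\int_{D_{\End w}}|\psi(\backspace w z)|^{2}\dd z\llG\norm{\psi}_{\Hc(D_{\Start w})}^{2}$. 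Combining this with the previous display and taking square roots gives $\opnorm{\widetilde{M}_{w,s}}\llG e^{\pi|\im s|}C_{\Gamma}^{\sigma}\len w^{\sigma}$, which is the claim. I expect the only real subtlety to be this matching of powers of $\len w$: the Jacobian of the contraction $\backspace w$ costs a factor $\len w^{-2}$, and it is essential to cancel it with the factor $\len w^{2}$ gained by restricting a Bergman function from $D_{\Start w}$ to the much smaller disk $D_{w}$; replacing Corollary \ref{cor:norm_restriction_Bergman} by the crude inequality $\norm{\psi}_{\Hc(D_{w})}\le\norm{\psi}_{\Hc(D_{\Start w})}$ would only yield $\len w^{\re s-1}$, which is too weak.
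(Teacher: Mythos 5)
Your argument is correct and follows essentially the same route as the paper: reduce to $\widetilde{M}_{w,s}$, bound the multiplier $|\backspace w'(z)^s|$ via the branch of $\tau$ and the derivative estimate (\ref{eq:upsilon-deriv-eq}), change variables $u=\backspace w z$, and cancel the Jacobian factor $\len w^{-2}$ against the $\len w^{2}$ gained from Corollary \ref{cor:norm_restriction_Bergman}. The only cosmetic difference is that the paper performs the change of variables first and then estimates $|\backspace w'(z)^s|^2/|\backspace w'(z)|^2 = |\backspace w'(z)|^{2\sigma-2}$ in one stroke, whereas you estimate the multiplier pointwise before changing variables; the content is identical, and your closing remark about why the crude restriction bound $\norm{\psi}_{\Hc(D_w)}\le\norm{\psi}_{\Hc(D_{\Start w})}$ is insufficient correctly identifies the crux.
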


\begin{proof}
Let $a=S(w)$ and $b=E(w)$, so that $\widetilde{M}_{w,s}:\Hc(D_{a})\to\Hc(D_{b})$.
Note that $\opnorm{M_{w,s}}=\opnorm{\widetilde{M}_{w,s}}$\footnote{Both operators have the same norm by the submultiplicativity of $\opnorm{\cdot}$
since $M_{w,s}=\Ptt_{b}^{*}\widetilde{M}_{w,s}\Ptt_{a}$ and $\widetilde{M}_{w,s}=\Ptt_{b}M_{w,s}\Ptt_{a}^{*}$.}. We work with the latter operator. Consider any $z_{b}\in D_{b}$
and a complex number $s=\sigma+it$ with $\sigma>0$. From the definition
of $\backspace w'(z_{b})^{s}$ we have

\[
|\backspace w'(z_{b})^{s}|=e^{-\alpha t}|\backspace w'(z_{b})|^{\sigma}
\]
for some $\alpha\in(-\pi,\pi)$. 

Now we compute the norm of $\widetilde{M}_{w,s}\psi_{a}$, for any
$\psi_{a}\in\Hc(D_{a})$, using the change of variable $z_{w}=\backspace wz_{b}$:
\begin{align*}
\norm{\widetilde{M}_{w,s}\psi_{a}}_{\Hc(D_{b})}^{2} & =\int_{D_{b}}|\backspace w'(z_{b})^{s}\psi_{a}(\backspace wz_{b})|^{2}\dd z_{b}\\
 & =\int_{D_{w}}\frac{|\backspace w'(z_{b})^{s}|^{2}}{|\backspace w'(z_{b})|^{2}}|\psi_{a}(z_{w})|^{2}\dd z_{w}\\
 & \leq e^{2\pi|t|}\int_{D_{w}}|\backspace w'(z_{b})|^{2\sigma-2}|\psi_{a}(z_{w})|^{2}\dd z_{w}.
\end{align*}
Since $\sigma>0$, applying (\ref{eq:upsilon-deriv-eq}) and Lemma
\ref{cor:norm_restriction_Bergman} we get

\begin{align*}
\norm{\widetilde{M}_{w,s}\psi_{a}}_{\Hc(D_{b})}^{2} & \leq e^{2\pi|t|}C_{\Gamma}^{2\sigma-2}\len w^{2\sigma-2}\norm{\psi_{a}}_{\Hc(D_{w})}^{2}\\
 & \llG e^{2\pi|t|}C_{\Gamma}^{2\sigma}\len w^{2\sigma}\norm{\psi_{a}}_{\Hc(D_{a})}^{2},
\end{align*}
from where our claim follows. 
\end{proof}
Let us write the limit operators $\Tc_{\ell,s}$ as a sum indexed
by $\Gamma_{\ell}$ instead of $\Gamma_{\ell+1}$:

\begin{align}
\Tc_{\ell,s} & =\sum_{w\in\Gamma_{\ell+1}}M_{w,s}\otimes\rho_{\Gamma}(\backspace w\inv)\nonumber \\
 & =\sum_{w\in\Gamma_{\ell}}\left(\sum_{\substack{b\in\Ac\\
w\inv\to b
}
}M_{w\inv b,s}\right)\otimes\rho_{\Gamma}(w)\nonumber \\
 & =\sum_{w\in\Gamma_{\ell}}T_{w,s}\otimes\rho_{\Gamma}(w),\label{eq:Limit_op}
\end{align}
where
\begin{equation}
T_{w,s}\eqdf\sum_{\substack{b\in\Ac\\
w\inv\to b
}
}M_{w\inv b,s}.\label{eq:def_Txs}
\end{equation}
These verify essentially the same bound as the $M_{w,s}$. Here are
the details:

\begin{lem}
\label{lem:bound_Tws} Let $\Gamma$ be a Fuchsian Schottky group.
There is a constant $C_{\Gamma}>1$ such that, for any $w\in\Gamma_{\ell}$
with $\ell\geq1$ and all $s\in\CC$ with positive real part we have
\[
\opnorm{T_{w,s}}\llG e^{\pi|\im s|}C_{\Gamma}^{\re s}\len w{}^{\re s}.
\]
\end{lem}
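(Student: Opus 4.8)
The statement is an immediate corollary of Lemma \ref{lem:bound_Mws} together with the definition \eqref{eq:def_Txs} of $T_{w,s}$ as a finite sum of operators $M_{w\inv b,s}$. The plan is as follows. Fix $w\in\Gamma_\ell$ with $\ell\geq 1$ and $s\in\CC$ with $\re s>0$. By definition,
\[
T_{w,s}=\sum_{\substack{b\in\Ac\\ w\inv\to b}}M_{w\inv b,s},
\]
so by the triangle inequality $\opnorm{T_{w,s}}\leq\sum_{b}\opnorm{M_{w\inv b,s}}$, where the sum is over the at most $2N_\Gamma-1$ letters $b\in\Ac$ with $w\inv\to b$ (i.e.\ all $b$ except $\End{w\inv}\inv$). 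For each such $b$, the word $w\inv b$ lies in $\Gamma_{\ell+1}$, so Lemma \ref{lem:bound_Mws} applies and gives
\[
\opnorm{M_{w\inv b,s}}\llG e^{\pi|\im s|}C_\Gamma^{\re s}\len{w\inv b}^{\re s}.
\]

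Next I would control $\len{w\inv b}$ in terms of $\len w$. Since $w\inv\to b$, rough multiplicativity \eqref{eq:upsilon-concatenation} gives $\len{w\inv b}\asymG\len{w\inv}\len b$; the factor $\len b$ is bounded above by an absolute constant (there are only finitely many letters), and the mirror estimate \eqref{eq:upsilon-mirror-eq} gives $\len{w\inv}\asymG\len w$. Hence $\len{w\inv b}\llG\len w$. Because $\re s>0$, raising to the power $\re s$ preserves the inequality up to a constant of the form $C_\Gamma^{\re s}$ (absorbing the implied multiplicative constant into the base $C_\Gamma$, enlarging $C_\Gamma$ if necessary to keep it $>1$), so $\len{w\inv b}^{\re s}\llG C_\Gamma^{\re s}\len w^{\re s}$.

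Combining the two displays and summing over the boundedly many admissible $b$, I obtain
\[
\opnorm{T_{w,s}}\llG (2N_\Gamma-1)\, e^{\pi|\im s|}C_\Gamma^{\re s}\len w^{\re s}\llG e^{\pi|\im s|}C_\Gamma^{\re s}\len w^{\re s},
\]
the last step again absorbing the constant factor $2N_\Gamma-1$ into $C_\Gamma^{\re s}$ (legitimate since $\re s>0$, so $C_\Gamma^{\re s}\geq 1$, and we may freely replace $C_\Gamma$ by a larger constant, still $>1$). This is exactly the claimed bound, with the same shape of constant as in Lemma \ref{lem:bound_Mws}.

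There is essentially no obstacle here: the only minor care needed is the bookkeeping of how powers $\re s$ interact with the implied $\llG$ constants, i.e.\ making sure every stray multiplicative constant can be written as $C_\Gamma^{\re s}$ with a single $C_\Gamma>1$ valid uniformly in $s$ (over $\re s>0$) and in $\ell$ — this works precisely because $\re s>0$ forces $C_\Gamma^{\re s}\geq 1$, so constants $\geq 1$ are harmless to absorb. The hypothesis $\re s>0$ is used exactly at these absorption steps (and is needed anyway to invoke Lemma \ref{lem:bound_Mws}).
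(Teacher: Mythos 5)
Your proof is correct and follows essentially the same route as the paper: isolate each $M_{w\inv b,s}$, apply Lemma \ref{lem:bound_Mws}, and control $\len{w\inv b}$ by $\len{w}$ via rough multiplicativity and the mirror estimate. The only structural difference is that you bound $\opnorm{T_{w,s}}$ via the triangle inequality, giving a factor $2N_\Gamma-1$, whereas the paper exploits that the $M_{w\inv b,s}$ have pairwise orthogonal images (they land in the disjoint $\Hc(D_b)$), giving the sharper factor $\sqrt{2N_\Gamma-1}$; both are absorbed into the implied $\llG$ constant, so the distinction is immaterial for the stated bound. One small caution on your bookkeeping: the parenthetical claim that the constant $2N_\Gamma-1$ can be absorbed into $C_\Gamma^{\re s}$ by enlarging $C_\Gamma$ is not literally correct, since $C_\Gamma^{\re s}\to 1$ as $\re s\to 0^+$ no matter how large $C_\Gamma$ is; a fixed multiplicative constant $>1$ cannot be dominated by $C_\Gamma^{\re s}$ uniformly over all $\re s>0$. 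What saves you (and what your displayed inequality in fact does) is that $2N_\Gamma-1$ is a $\Gamma$-dependent constant and so simply goes into the implied constant of $\llG$. The genuine $C_\Gamma^{\re s}$-absorption is reserved for constants that enter \emph{raised to the power} $\re s$, such as the one from $\len{w\inv b}\llG\len w$, which you handle correctly.
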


\begin{proof}
Let us fix $w\in\Gamma_{\ell}$. Note that the operators $M_{w\inv b,s}$
with $b\in\Ac$ such that $w\inv\to b$ have pairwise orthogonal images,
so 

\begin{equation}
\opnorm{T_{w,s}}\leq\sqrt{2N_{\Gamma}-1}\max_{\substack{\substack{b}
\in\Ac\\
w\inv\to b
}
}\opnorm{M_{w\inv b,s}}.\label{eq:AAA1}
\end{equation}
The concatenation (\ref{eq:upsilon-concatenation}) and mirror (\ref{eq:upsilon-mirror-eq})
estimates imply that $\len{w\inv b}\llG\len w$. Thus, for all $s\in\CC$
with $\re s>0$, Lemma \ref{lem:bound_Mws} yields
\[
\opnorm{M_{w\inv b,s}}\llG e^{\pi|\im s|}C_{\Gamma}^{\re s}\len w^{\re s},
\]
which combined with (\ref{eq:AAA1}) proves the bound on $\opnorm{T_{w,s}}$.
\end{proof}

\subsection{The proof of the norm bound of the limit operators \label{subsec:proof_bound_Tks}}
\begin{proof}
[Proof of Proposition \ref{prop:norm_Tks}]In all the proof we fix
$\ell\geq1,\eps>0$ and $s\in\CC$ with $\re s\geq\frac{\gexp}{2}+\eps$.
Consider the following map $R_{\bullet}:\Gamma\to B(\Hc(\Dne)):$
\[
R_{w}=\begin{cases}
T_{w,s} & \text{if }\wlength w=\ell,\\
0 & \text{otherwise.}
\end{cases}
\]
Recall that
\[
\Tc_{\ell,s}=\sum_{w\in\Gamma_{\ell}}R_{w}\otimes\rho_{\Gamma}(w)
\]
by (\ref{eq:Limit_op}). Since $R_{\bullet}$ is supported on $\Gamma_{\ell}$,
by Theorem \ref{thm:Buchholz_bound} we have\emph{
\begin{equation}
\opnorm{\Tc_{\ell,s}}\leq(\ell+1)\max_{0\leq j\leq\ell}\opnorm{\Rc(j,\ell-j)},\label{eq:Tks_Buchholz}
\end{equation}
}with $\Rc(j,\ell-j):L^{2}(\Gamma_{j},\Hc(\Dne))\to L^{2}(\Gamma_{\ell-j},\Hc(\Dne))$
as in (\ref{eq:def_Rmn}). We bound the norm of $\Rc(j,\ell-j)$ using
Lemma \ref{lem:opnorms_vs_HSnorm}, the triangle inequality for $\opnorm{\cdot}$
and the identity\footnote{Which holds for any bounded operator $T$ on a Hilbert space.}
$\opnorm{T^{*}T}=\opnorm T^{2}$ as follows:

\begin{align*}
\opnorm{\Rc(j,\ell-j)} & \leq\left(\sum_{w_{\ell-j}\in\Gamma_{\ell-j}}\sum_{\substack{w_{j}\in\Gamma_{j}\\
w_{j}\to w_{\ell-j}
}
}\opnorm{R_{w_{j}w_{\ell-j}}}^{2}\right)^{\frac{1}{2}}\\
 & =\left(\sum_{w\in\Gamma_{\ell}}\opnorm{T_{w,s}}^{2}\right)^{\frac{1}{2}}.
\end{align*}
By (\ref{eq:upsilon-exp-bound}), $\len w<1$ for any $w\in\Gamma_{\ell_{0}}$
provided $\ell_{0}\ggG1$. Assuming this holds for our initial $\ell$,
we have $\len w^{2\re s}\leq\len w^{\gexp+2\eps}$ for any $w\in\Gamma_{\ell}$.
From the bound for $\opnorm{T_{w,s}}$ of Lemma \ref{lem:bound_Tws},
and Lemma \ref{lem:exp_sums_of_boundary_lengths} we get

\begin{align}
\opnorm{\Rc(j,\ell-j)} & \llG e^{\pi|\im s|}C_{\Gamma}^{\re s}\left(\sum_{w_{\ell}\in\Gamma_{\ell}}\len{w_{\ell}}^{2\re s}\right)^{\frac{1}{2}}\nonumber \\
 & \leq e^{\pi|\im s|}C_{\Gamma}^{\re s}\left(\sum_{w_{\ell}\in\Gamma_{\ell}}\len{w_{\ell}}^{\gexp+2\eps}\right)^{\frac{1}{2}}\nonumber \\
 & \leq e^{\pi|\im s|}C_{\Gamma}^{\re s}\tau_{\Gamma}^{\ell\eps}.\label{eq:Tks_fin}
\end{align}
The claimed upper bound for $\opnorm{\Tc_{\ell,s}}$ follows from
(\ref{eq:Tks_Buchholz}) and (\ref{eq:Tks_fin}).
\end{proof}

\section{Proof of the spectral gap for random Schottky surfaces \label{sec:Proof-sp_gap_Schottky}}

In the situation of Theorem \ref{thm:sp_gap_random_Schottky}, our
strategy to prove that the random cover $X_{n}$ has no new resonances
in $\Kc$ is to show that, with high probability, some power $\Lc_{s,\rho_{n}^{0}}^{\ell}$
of the relevant random transfer operator has norm $<1$ for all $s\in\Kc$.
This is shown by feeding into Theorem \ref{thm:sc_deterministic}
a deep result on random matrices of Bordenave--Collins that we recall
in Section \ref{subsec:Background-on-random}. Having this, we complete
the proof of our spectral gap for random Schottky surfaces in Section
\ref{subsec:Proof-sp_gap_Schottky}\@.

\subsection{Background on random matrices\label{subsec:Background-on-random}}

The result of Bordenave-Collins is a contribution to a general problem
on random matrices that we now explain intuitively: The broad goal
is to study the joint behavior of finite sequences of independent,
random $n\times n$ matrices $X_{n,1},\ldots,X_{n,N}$ in some fixed
model $\Mc$, as $n\to\infty$. It is known that for many important
models , the matrices $X_{n,1},\ldots,X_{n,N}$ jointly converge to
deterministic operators $X_{\infty,1},\ldots,X_{\infty,N}$ of some
Hilbert space in the sense that, for any noncommutative polynomial
$P$ in $2N$ variables, the operators $P_{n}$ a.s. ``tend'' to
$P_{\infty}$, where $P_{j}=P(X_{j,1},\ldots,X_{j,N},X_{j,1}^{*},\ldots,X_{j,N}^{*})$.
This series of results started with the breakthrough work \cite{haagerup-thorb_ANewApplication_2005}
of Haagerup and Thorbjørnsen, where they treat the case of the Gaussian
Unitary Ensemble. The analog results for the Gaussian Orthogonal and
Symplectic Ensembles where established by Schulz in \cite{schultz_non-commutative_2005};
\cite{capitane-donati_StrongAsymptotic_2007} and \cite{anderson_ConvergenceWigner_2013}
treat some Wigner matrices under certain conditions on the distribution,
while \cite{collins-male_StrongAsFHaar_2014} consider random unitary
matrices with Haar distribution. Bordenave and Collins handle in \cite{bordenave-collins_EvRandomLifts_2019}
the case relevant for random covers of Schottky surfaces, namely,
when $X_{n,1},\ldots,X_{n,N}$ are the random unitary operators on
$V_{n}^{0}$ coming from a sequence $\varphi_{n,1},\ldots,\varphi_{n,N}$
of independent random permutations of $[n]$ with uniform distribution.

Let us now turn the random matrix intuition into a rigorous statement
in terms of representation theory. Besides being better suited for
our purposes, this language allows us to describe the limiting operators
in a natural and straightforward way: Let $\Gamma$ be a free group
of rank $N$ and consider a basis $\Bc=\{\gamma_{1},\ldots,\gamma_{N}\}$
of it. A sequence $\varphi_{n,1},\ldots,\varphi_{n,N}$ as above is
the same thing as a uniformly random homomorphism $\phi_{n}:\Gamma\to S_{n}$
simply by setting $\varphi_{n,j}=\phi_{n}(\gamma_{j})$. Thus, the
goal is to understand the random unitary representations $(\rho_{n}^{0},V_{n}^{0})_{n\geq1}$
of $\Gamma$. Below we reformulate \cite[Theorem 3]{bordenave-collins_EvRandomLifts_2019}
in terms of these representations. Recall that $\rho_{\Gamma}$ denotes
the left regular representation of $\Gamma$.
\begin{thm}[Bordenave--Collins]
\label{thm:BC}Let $\Gamma$ be a free group of finite rank. The
sequence $(\rho_{n}^{0},V_{n}^{0})_{n\geq1}$ of random representations
a.a.s. strongly converges to $(\rho_{\Gamma},\ell^{2}(\Gamma))$.
\end{thm}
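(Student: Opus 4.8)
The plan is to deduce Theorem~\ref{thm:BC} directly from \cite[Theorem~3]{bordenave-collins_EvRandomLifts_2019}: the statement here is a representation-theoretic repackaging of their theorem, so the only genuine work is to align the two formulations and to pass from self-adjoint to arbitrary elements of $\CC[\Gamma]$.

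\emph{Step 1: the dictionary.} Fix the basis $\Bc=\{\gamma_{1},\dots,\gamma_{N}\}$ of $\Gamma$. By freeness, a uniformly random $\phi_{n}\in\mathrm{Hom}(\Gamma,S_{n})$ is exactly an $N$-tuple of independent uniformly random permutations $\phi_{n}(\gamma_{1}),\dots,\phi_{n}(\gamma_{N})\in S_{n}$, which is the input of Bordenave--Collins. I would set $U_{j}\eqdf\rho_{n}^{0}(\gamma_{j})\in U(V_{n}^{0})$, the restriction to the mean-zero subspace $V_{n}^{0}\subseteq\ell^{2}([n])$ of the permutation operator of $\phi_{n}(\gamma_{j})$; since $U_{j}$ is unitary, $U_{j}^{*}=\rho_{n}^{0}(\gamma_{j}^{-1})$. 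Expanding any $f=\sum_{g}c_{g}g\in\CC[\Gamma]$ by writing each $g$ as a reduced word in the $\gamma_{j}^{\pm1}$ exhibits a fixed noncommutative $*$-polynomial $P_{f}$ with scalar coefficients such that $\rho_{n}^{0}(f)=P_{f}(U_{1},\dots,U_{N},U_{1}^{*},\dots,U_{N}^{*})$, and, by the same formula, $\rho_{\Gamma}(f)=P_{f}(\lambda_{1},\dots,\lambda_{N},\lambda_{1}^{*},\dots,\lambda_{N}^{*})$ with $\lambda_{j}\eqdf\rho_{\Gamma}(\gamma_{j})$. Hence (\ref{eq:def_strong_convergence}) for all $f$ is equivalent to the assertion that $\opnorm{P_{n}}\to\opnorm{P_{\infty}}$ a.a.s.\ for every noncommutative $*$-polynomial $P$ in $N$ variables, where $P_{n}\eqdf P(U_{1},\dots,U_{N},U_{1}^{*},\dots)$ and $P_{\infty}\eqdf P(\lambda_{1},\dots,\lambda_{N},\lambda_{1}^{*},\dots)\in B(\ell^{2}(\Gamma))$.

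\emph{Step 2: reduction to self-adjoint polynomials.} Bordenave--Collins establish Hausdorff convergence of the spectrum of $Q_{n}$ to that of $Q_{\infty}$ for every \emph{self-adjoint} element $Q$ of the above type, acting on $V_{n}^{0}$; in particular $\opnorm{Q_{n}}\to\opnorm{Q_{\infty}}$. For a general $P$ I would apply this with $Q=P^{*}P$, a self-adjoint $*$-polynomial in the $\gamma_{j}$ and $\gamma_{j}^{-1}$, and conclude from $\opnorm{P_{n}}^{2}=\opnorm{Q_{n}}$ and $\opnorm{P_{\infty}}^{2}=\opnorm{Q_{\infty}}$; equivalently one may dilate $P$ to the self-adjoint $2\times2$ operator matrix with off-diagonal entries $P$ and $P^{*}$, whose norm is $\opnorm{P}$. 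This yields the required norm convergence for every $P$, hence (\ref{eq:def_strong_convergence}) for all $f\in\CC[\Gamma]$ in the a.a.s.\ sense claimed; feeding in the almost-sure form of the Bordenave--Collins theorem instead gives the a.s.\ strengthening recorded in the Remark after Theorem~\ref{thm:sc_deterministic}.

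\emph{Step 3: where the difficulty lies.} From the standpoint of the present paper there is essentially no obstacle --- the whole weight of Theorem~\ref{thm:BC} sits in the cited Bordenave--Collins result, and Steps~1--2 are bookkeeping. The hard half of their theorem, which we quote, is the upper bound $\limsup_{n}\opnorm{Q_{n}}\le\opnorm{Q_{\infty}}$ for self-adjoint $Q$; the matching lower bound is easy, following from weak convergence --- i.e.\ from $\frac{1}{\dim V_{n}^{0}}\tr\rho_{n}^{0}(g)\to\mathbf{1}[g=e]$, a random-word fixed-point count together with concentration --- via the inequality $\opnorm{A}\ge(\frac{1}{d}\tr(A^{p}))^{1/p}$ for positive $A$ on a $d$-dimensional space and the faithfulness of the canonical trace on $C^{*}_{r}(\Gamma)$. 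Reproving the upper bound from scratch would require controlling high moments $\tr\!\big((Q_{n}Q_{n}^{*})^{p}\big)$ with $p=p(n)\to\infty$ and showing that the non-independent $\pm1$ entries of permutation matrices contribute, to leading order, exactly as free Haar unitaries would --- the substance of the Bordenave--Collins non-backtracking linearization and its operator-valued Stieltjes-transform analysis --- and we make no attempt to do so here.
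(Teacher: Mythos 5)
Your proposal is correct and takes essentially the same route as the paper: the paper gives no proof of Theorem \ref{thm:BC} beyond declaring it a reformulation of \cite[Theorem 3]{bordenave-collins_EvRandomLifts_2019}, which is precisely the citation you rest on. Your Steps 1--2 merely make explicit the dictionary between uniformly random homomorphisms and independent uniform permutations and the standard reduction from arbitrary elements of $\CC[\Gamma]$ to self-adjoint ones via $P^{*}P$ (or dilation) --- bookkeeping the paper leaves implicit.
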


\subsection{Proof of Theorem \ref{thm:sp_gap_random_Schottky}\label{subsec:Proof-sp_gap_Schottky}}

We are finally ready to prove our main result.
\begin{proof}[Proof of Theorem \ref{thm:sp_gap_random_Schottky}]
Suppose we are given a Schottky surface $X=\Gamma\backslash\Hyp$
and $\mathcal{K}$ as in the statement of Theorem \ref{thm:sp_gap_random_Schottky}.

By plugging Theorem \ref{thm:BC} into Theorem \ref{thm:sc_deterministic}
we see that there is $\ell=\ell(\Kc,\Gamma)$ such that the probability
that
\begin{equation}
\opnorm{\Lc_{s,\rho_{n}^{0}}^{\ell}}<1\text{ for all }s\in\Kc\label{eq:final1}
\end{equation}
tends to 1 as $n\to\infty$. When (\ref{eq:final1}) holds for a deterministic
homomorphism $\phi:\Gamma\to S_{n}$ and a fixed $s_{0}\in\Kc$ ,
1 is not an eigenvalue of the corresponding $\Lc_{s_{0},\rho}$. Hence
by Proposition \ref{prop:relation_eigenvalues_transf_op}, $s_{0}$
is not a new resonance of $X_{\phi}$. In our probabilistic setting,
we conclude that a.a.s. there are no new resonances of the random
cover $X_{n}$ in $\Kc$.
\end{proof}
\bibliographystyle{amsalpha}
\bibliography{Random_Schottky_surfaces}

\end{document}